\newtheorem{theorem}{Theorem}[section]
\newtheorem{corollary}{Corollary}[theorem]
\newtheorem{lemma}[theorem]{Lemma}
\newtheorem{problem}[theorem]{Problem}
\newtheorem{claim}[theorem]{Claim}
\newtheorem{proposition}[theorem]{Proposition}
\newtheorem{remark}[theorem]{Remark}
\newtheorem{fact}[theorem]{Fact}
\newtheorem{definition}[theorem]{Definition}
\theoremstyle{definition}
\def\Ind#1#2{#1\setbox0=\hbox{$#1x$}\kern\wd0\hbox to 0pt{\hss$#1\mid$\hss}
\lower.9\ht0\hbox to 0pt{\hss$#1\smile$\hss}\kern\wd0}
\def\ind{\mathop{\mathpalette\Ind{}}}
\def\notind#1#2{#1\setbox0=\hbox{$#1x$}\kern\wd0
\hbox to 0pt{\mathchardef\nn=12854\hss$#1\nn$\kern1.4\wd0\hss}
\hbox to 0pt{\hss$#1\mid$\hss}\lower.9\ht0 \hbox to 0pt{\hss$#1\smile$\hss}\kern\wd0}
\patchcmd{\subsection}{-.5em}{.5em}{}{}
\title{On $\mathrm{NSOP}_{2}$ Theories}
\author{Scott Mutchnik}
\begin{document}

\begin{abstract}
Answering a question of Džamonja and Shelah, we show that every $\mathrm{NSOP}_{2}$ theory is $\mathrm{NSOP}_{1}$.

\end{abstract}
\maketitle

\tableofcontents

\newpage

\section{Introduction}
One of the most exciting areas of research in modern model theory is the classification along various dividing lines of non-simple but otherwise tame theories, especially $\mathrm{NSOP}_{n}$ theories for $1 \leq n \leq 3$. The first two of these properties, introduced in
\cite{DS04}, require the nonexistence of certain trees:

\begin{definition}
A theory $T$ is $\mathrm{NSOP}_{1}$ if there does not exist a formula $\varphi(x, y)$ and tuples $\{b_{\eta}\}_{\eta \in 2^{<\omega}}$ so that $\{\varphi(x, b_{\sigma \upharpoonleft n})\}_{n \in \omega}$ is consistent for any $\sigma \in 2^{\omega}$, but for any $\eta_{2} \unrhd \eta_{1} \smallfrown \langle 0\rangle$, $\{\varphi(x, b_{\eta_{2}}), \varphi(x, b_{\eta_{1} \smallfrown \langle 1\rangle})\}$ is inconsistent. Otherwise it is $\mathrm{SOP}_{1}$.
\end{definition}

\begin{definition}
A theory $T$ is $\mathrm{NSOP}_{2}$  if there does not exist a formula $\varphi(x, y)$ and tuples $\{b_{\eta}\}_{\eta \in 2^{<\omega}}$ so that $\{\varphi(x, b_{\sigma \upharpoonleft n})\}_{n \in \omega}$ is consistent for any $\sigma \in 2^{\omega}$, but for incomparable $\eta_{1}$ and $\eta_{2}$, $\{\varphi(x, b_{\eta_{1}}), \varphi(x, b_{\eta_{2}})\}$ is inconsistent. Otherwise it is $\mathrm{SOP}_{2}$.
\end{definition}

The property $\mathrm{NSOP}_{3}$ is introduced in \cite{She95} as part of a family of notions $\mathrm{NSOP}_{n}$ for $n \geq 3$:

\begin{definition}
A theory $T$ is $\mathrm{NSOP}_{n}$ (that is, does not have the \emph{n-strong order property}) if there is no definable relation $R(x_{1}, x_{2})$ with no $n$-cycles, but with tuples $\{a_{i}\}_{i \in \omega}$ with $\models R(a_{i}, a_{j})$ for $i <j$. Otherwise it is $\mathrm{SOP}_{n}$.
\end{definition}

\begin{fact}
\emph{(\cite{She95}, \cite{DS04})} Simple theories are $\mathrm{NSOP}_{1}$, and $\mathrm{NSOP}_{n}$ theories are $\mathrm{NSOP}_{m}$ for $n \leq m$.
\end{fact}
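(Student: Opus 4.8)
The plan is to prove each inclusion by its contrapositive, so that the two assertions become: (i) $\mathrm{SOP}_1 \Rightarrow \neg(\text{simple})$, and (ii) $\mathrm{SOP}_m \Rightarrow \mathrm{SOP}_n$ for all $n \le m$. A preliminary observation shapes the argument: the two families of definitions are of different form, since $\mathrm{SOP}_1$ and $\mathrm{SOP}_2$ are phrased via a tree $\{b_\eta\}_{\eta \in 2^{<\omega}}$, while $\mathrm{SOP}_n$ for $n \ge 3$ is phrased via a relation without $n$-cycles. I would therefore split (ii) into three regimes: the purely tree-theoretic step $\mathrm{SOP}_2 \Rightarrow \mathrm{SOP}_1$; the crossover $\mathrm{SOP}_3 \Rightarrow \mathrm{SOP}_2$ between the two formulations; and the purely cycle-theoretic monotonicity $\mathrm{SOP}_{m} \Rightarrow \mathrm{SOP}_{m-1}$ for $m \ge 4$.

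For $\mathrm{SOP}_2 \Rightarrow \mathrm{SOP}_1$ I would check that a single witnessing tree does both jobs. Suppose $\varphi(x,y)$ and $\{b_\eta\}_{\eta \in 2^{<\omega}}$ witness $\mathrm{SOP}_2$; the branch-consistency hypothesis is literally the one required by $\mathrm{SOP}_1$, so only the inconsistency clause needs attention. Fix $\eta_1$ and any $\eta_2 \unrhd \eta_1 \smallfrown \langle 0 \rangle$. Then $\eta_2$ and $\eta_1 \smallfrown \langle 1 \rangle$ agree on $\eta_1$ but disagree at the next coordinate ($0$ versus $1$), hence are incomparable in $2^{<\omega}$; the $\mathrm{SOP}_2$ hypothesis applied to this incomparable pair gives that $\{\varphi(x,b_{\eta_2}), \varphi(x, b_{\eta_1 \smallfrown \langle 1\rangle})\}$ is inconsistent, which is exactly the $\mathrm{SOP}_1$ requirement. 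So the same data witness $\mathrm{SOP}_1$, and no construction is needed.

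For (i), I would argue that $\mathrm{SOP}_1$ implies the tree property $\mathrm{TP}$, and then invoke Shelah's characterization of simplicity as $\mathrm{NTP}$. Concretely, from an $\mathrm{SOP}_1$ tree one passes to an indiscernible tree and extracts a formula whose immediate-successor instances are $2$-inconsistent while branches remain consistent, i.e.\ a $\mathrm{TP}$ witness; this is the content of the relevant lemma of \cite{DS04}, which I would cite rather than reprove. For the cycle monotonicity $\mathrm{SOP}_m \Rightarrow \mathrm{SOP}_{m-1}$ with $m \ge 4$, the plan is to begin from a relation $R$ having no $m$-cycle together with an infinite $R$-chain, extract an indiscernible chain by Ramsey and compactness, and then use indiscernibility together with the absence of long cycles to produce a derived formula forbidding $(m-1)$-cycles while still admitting a chain, following \cite{She95}.

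I expect the genuine obstacle to be the crossover $\mathrm{SOP}_3 \Rightarrow \mathrm{SOP}_2$: here one must convert a cycle-free relation carrying a chain into a binary tree enjoying \emph{pairwise} incomparable-inconsistency, and unlike $\mathrm{SOP}_2 \Rightarrow \mathrm{SOP}_1$ this cannot be achieved by reinterpreting the same data, since the no-$3$-cycle asymmetry of $R$ must be re-encoded as inconsistency across all incomparable nodes of the tree. I would treat this step, and the cycle-shortening of the previous paragraph, as the substantive classical input and cite \cite{She95} and \cite{DS04} for them, reserving a self-contained verification only for the elementary step $\mathrm{SOP}_2 \Rightarrow \mathrm{SOP}_1$.
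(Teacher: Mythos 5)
The paper offers no proof of this Fact at all—it is stated as a known result and attributed to \cite{She95} and \cite{DS04}, which is essentially what you do for every step except $\mathrm{SOP}_2 \Rightarrow \mathrm{SOP}_1$. Your self-contained verification of that one step is correct (for $\eta_2 \unrhd \eta_1 \smallfrown \langle 0\rangle$, the nodes $\eta_2$ and $\eta_1 \smallfrown \langle 1\rangle$ disagree at coordinate $|\eta_1|$ and hence are incomparable, so the $\mathrm{SOP}_2$ witness is literally an $\mathrm{SOP}_1$ witness), and your attribution of the remaining implications ($\mathrm{SOP}_1$ implies non-simplicity, $\mathrm{SOP}_3 \Rightarrow \mathrm{SOP}_2$, and the cycle-shortening $\mathrm{SOP}_m \Rightarrow \mathrm{SOP}_{m-1}$) to the classical sources matches the paper's treatment.
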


In \cite{SD00} it is shown that $T^{*}_{feq}$, the model companion of the theory of parametrized equivalence relations, is $\mathrm{NSOP}_{1}$ but not simple; a limited number of further examples have since been found by various authors. Yet the main problem, posed by Džamonja and  Shelah in \cite{DS04}, has remained unsolved:

\begin{problem}
Are all $\mathrm{NSOP}_{3}$ theories $\mathrm{NSOP}_{2}$? Are all $\mathrm{NSOP}_{2}$ theories $\mathrm{NSOP}_{1}$?
\end{problem}

In this paper we answer the latter question in the positive:

\begin{theorem}\label{3-main}
All $\mathrm{NSOP}_{2}$ theories are $\mathrm{NSOP}_{1}$.
\end{theorem}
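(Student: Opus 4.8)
The plan is to prove the contrapositive: assuming $T$ is $\mathrm{NSOP}_{2}$, I will exhibit a ternary independence relation over models satisfying the axioms that characterize $\mathrm{NSOP}_{1}$ theories in the Kim--Pillay style of Chernikov--Ramsey and Kaplan--Ramsey, whence $T$ is $\mathrm{NSOP}_{1}$. The natural candidate is Kim-independence computed along coheir sequences: declare $a \ind^{K}_{M} b$ when $\mathrm{tp}(a/Mb)$ does not Kim-divide over the model $M$, where Kim-dividing is witnessed by Morley sequences generated by a global type finitely satisfiable in $M$. I choose coheir witnesses because they exist canonically via ultrafilters and enjoy the strongest extension and chain properties, making the witnessing tractable. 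Invariance, monotonicity, finite character, and existence are then immediate, so the entire difficulty is concentrated in extension, symmetry, and the independence theorem, and it is precisely here that $\mathrm{NSOP}_{2}$ must enter essentially. A purely combinatorial upgrade of an $\mathrm{SOP}_{1}$-tree into an $\mathrm{SOP}_{2}$-tree is hopeless, since the former only demands inconsistency in the restricted pattern $\eta_{2} \unrhd \eta_{1}\smallfrown\langle 0\rangle$ while the latter demands it for all incomparable pairs; the model-theoretic machinery is what bridges this gap.

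The technical heart is a Kim's Lemma valid under $\mathrm{NSOP}_{2}$: if $\varphi(x,a)$ Kim-divides over $M$ along some coheir Morley sequence starting with $a$, then it divides along every such sequence. I would prove this by contradiction. If one coheir sequence witnesses inconsistency while another leaves $\{\varphi(x,a_{i})\}_{i}$ consistent, I would iterate the spreading-out of coheir extensions to build a tree $\{b_{\eta}\}_{\eta \in 2^{<\omega}}$ whose branches realize the consistent pattern while incomparable nodes inherit inconsistency. The delicate point is that the naive construction yields only the weaker $\mathrm{SOP}_{1}$ inconsistency pattern, whereas to contradict $\mathrm{NSOP}_{2}$ one needs genuine inconsistency of $\{\varphi(x,b_{\eta_{1}}),\varphi(x,b_{\eta_{2}})\}$ for \emph{all} incomparable $\eta_{1},\eta_{2}$. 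I would close this gap by passing to a tree-indiscernible skeleton via the modeling property for the appropriate category of trees (in the spirit of Scow and of Kim--Kim--Scow), which homogenizes the inconsistency relation across incomparable cones and thereby upgrades the $\mathrm{SOP}_{1}$ pattern to a full $\mathrm{SOP}_{2}$ pattern.

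With Kim's Lemma in hand, extension follows in the standard way, and symmetry of $\ind^{K}$ is obtained by adapting the Kaplan--Ramsey symmetry argument: a hypothetical asymmetry, $a \ind^{K}_{M} b$ together with $b \nind^{K}_{M} a$, can again be unwound into a tree realizing a consistent branch condition but uniform inconsistency on incomparable nodes, contradicting $\mathrm{NSOP}_{2}$ through the same indiscernible-tree homogenization. The independence theorem (3-amalgamation over models) would be handled similarly: given two Kim-independent realizations of a type agreeing on their common base, a failure of amalgamation is converted into a $2$-inconsistent tree and excluded by $\mathrm{NSOP}_{2}$. These are exactly the non-formal axioms, and reducing each to the absence of an $\mathrm{SOP}_{2}$ tree is where the argument lives.

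Finally I would invoke the abstract criterion that any theory admitting an independence relation over models with these properties is $\mathrm{NSOP}_{1}$, completing the contrapositive. I expect the main obstacle to be the upgrade from $\mathrm{SOP}_{1}$-type to $\mathrm{SOP}_{2}$-type inconsistency in every tree construction: wherever the classical $\mathrm{NSOP}_{1}$ development merely needs one branch to be consistent, the $\mathrm{NSOP}_{2}$ argument must simultaneously force inconsistency on all incomparable nodes, and securing this uniformity --- via tree indiscernibility and careful control of the coheir witnesses --- rather than the formal verification of the remaining axioms, is where the genuinely new work lies.
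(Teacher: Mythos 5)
Your overall architecture (build an independence relation, verify the Kaplan--Ramsey axioms, conclude $\mathrm{NSOP}_{1}$) is reasonable in outline, but the load-bearing step --- the full Kim's lemma for coheir Morley sequences under $\mathrm{NSOP}_{2}$ alone --- is exactly where the proposal breaks down, and the proposed fix does not work. You suggest taking one coheir Morley sequence witnessing Kim-dividing and one that does not, building a tree whose branches realize the consistent pattern while only the $\mathrm{SOP}_{1}$-configured pairs are inconsistent, and then ``homogenizing'' via tree indiscernibility to force inconsistency on all incomparable pairs. The modeling property for strongly indiscernible trees does not do this: it produces a tree $(c_{\eta})$ such that each tuple $c_{\overline{\eta}}$ realizes the type of $b_{\overline{\eta}'}$ for \emph{some} $\sim_{0}$-equivalent $\overline{\eta}'$ in the original tree. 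Since every incomparable pair has the same quantifier-free type in $\{<_{\mathrm{lex}},\lhd,\wedge\}$, but in an $\mathrm{SOP}_{1}$-pattern tree only the pairs of the special form $(b_{\eta_{2}}, b_{\eta_{1}\smallfrown\langle 1\rangle})$ with $\eta_{2}\unrhd\eta_{1}\smallfrown\langle 0\rangle$ are known to be inconsistent, the indiscernible skeleton is free to inherit the \emph{consistent} behavior of generic incomparable pairs. If this homogenization worked, it would give a purely combinatorial proof that $\mathrm{SOP}_{1}$ implies $\mathrm{SOP}_{2}$, which you yourself call hopeless in your first paragraph. (The paper is pointedly aware of this trap: it notes that the Chernikov--Ramsey proof that weak $k$-$\mathrm{TP}_{1}$ implies $\mathrm{SOP}_{2}$ fails precisely because the relevant subtree is not actually strongly indiscernible over the needed parameters, and it works instead with descending combs, whose uniform $\sim_{0}$-type for each fixed length is what makes the indiscernibility argument sound.)

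The actual proof has two ingredients that your proposal is missing. First, Kim's lemma is not proved for arbitrary coheir Morley sequences but only in one direction for a distinguished class of \emph{canonical} coheirs, obtained by iterating a Chernikov--Kaplan-style construction ($\ind^{\mathrm{CK}}$): if a canonical Morley sequence witnesses Kim-dividing of $\varphi(x,b)$, then every coheir Morley sequence does. This is proved by building coheir trees in which every descending comb begins a canonical Morley sequence, so that the inconsistency condition lands on descending combs and produces $k$-$\mathrm{DCTP}_{1}$, which is shown equivalent to $\mathrm{SOP}_{2}$. This yields a symmetric relation (Conant-independence $\ind^{K^{*}}$, dividing along \emph{all} coheir Morley sequences) with a chain condition and a weak independence theorem --- but, and this is the second missing ingredient, symmetry of $\ind^{K^{*}}$ does \emph{not} imply $\mathrm{NSOP}_{1}$; the paper stresses that the same behavior occurs in some $\mathrm{SOP}_{3}$ theories. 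The endgame is therefore a dichotomy argument modeled on Conant's free amalgamation theorem: if $T$ were $\mathrm{SOP}_{1}$, ordinary Kim-dividing independence would fail to be symmetric and hence differ from $\ind^{K^{*}}$, and from such a disagreement one constructs an explicit $\mathrm{SOP}_{3}$ configuration, contradicting $\mathrm{NSOP}_{2}$. Without the canonical-coheir machinery and without this $\mathrm{SOP}_{3}$ construction, the proposal does not close.
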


One reason for the significance of this problem comes from Shelah and Usvyatsov's proposal in \cite{SD00} to characterize classes of theories both internally in terms of the structure of their sufficiently saturated models, and externally in terms of orders on theories. The $\mathrm{NSOP}_{2}$ theories have a deep external characterization: under the generalized continuum hypothesis, Džamonja and Shelah \cite{DS04} show that maximality in the order $\lhd^{*}$, an order related to the Keisler order, implies a combinatorial property related to $\mathrm{SOP}_{2}$, which Shelah and Usvyatsov then show in \cite{SD00} to be the same as $\mathrm{SOP}_{2}$; later, Malliaris and Shelah in \cite{MS17} show the equivalence between $\mathrm{SOP}_{2}$ and $\lhd^{*}$-maximality under the generalized continuum hypothesis. On the other hand, $\mathrm{NSOP}_{1}$ theories can be characterized internally not only in terms of trees, but through the theory of independence, in analogy with stability theory. It is well known that simple theories are characterized as those theories where forking and dividing behave in certain ways as they do in stable theories; for example, symmetry of forking characterizes simple theories. In \cite{KR17}, Kaplan and Ramsey show that \textit{Kim-forking}, or forking witnessed by invariant Morley sequences, is the correct way of extending the theory of forking to $\mathrm{NSOP}_{1}$ theories from simple theories. By relaxing the requirement of base monotonicity, they extend the Kim-Pillay characterization of simple theories in terms of the existence of abstract independence relations to $\mathrm{NSOP}_{1}$ theories, and, more concretely, characterize $\mathrm{NSOP}_{1}$ theories by the symmetry of Kim-independence, by the independence theorem for Kim-independence, and by a variant of Kim's lemma in simple theories, asserting that \textit{Kim-dividing} of a formula, rather than dividing, is witnessed by any invariant Morley sequence. Our result that $\mathrm{NSOP}_{1}$ theories coincide with $\mathrm{NSOP}_{2}$ theories therefore shows a surprising agreement between dividing lines related to Keisler's order and dividing lines related to independence.

We outline the paper and give a word on the strategy for the proof. In section 3, we develop in general theories a version of a construction used by Chernikov and Kaplan in \cite{CK09} to study forking and dividing in $\mathrm{NTP}_{2}$ theories. In \cite{A09}, Adler initiated the study of abstract relations between sets in a model, generalizing some of the properties of forking-independence, coheirs, and other concrete relations from model theory, and provided a set of potential axioms for these relations\footnote{Other than Adler's work in \cite{A09} and Conant's work on free amalgamation theories in \cite{Co15}, an additional observation which ultimately led us to the proof of this result is found in \cite{D19}, where d'Elb\'ee proposes the problem of explaining the apparent ubiquity of additional independence relations with no known concrete model-theoretic independence relations in $\mathrm{NSOP}_{1}$ theories, such as \textit{strong independence} existing alongside Kim-independence in the theory $\mathrm{ACFG}$ (introduced as part of a more general class in \cite{D18}) of algebraically closed fields with a generic additive subgroup. He observes that just as in the case of free amalgamation of generic functional structures in \cite{KR18} or generic incidence structures in \cite{CoK19}, these stronger independence relations can be used to prove the equivalence of forking and dividing for complete types in many known $\mathrm{NSOP}_{1}$ theories. Before proving Theorem 1.6, we gave some very weak axioms (including \textit{stationarity}, a feature of the examples considered by \cite{D19}) for abstract relations between sets over a model, which appeared to be very common in $\mathrm{NSOP}$ theories including strictly $\mathrm{NSOP}_{1}$ theories and $\mathrm{NSOP}_{4}$ theories, and proved that theories with such a relation could not be $\mathrm{NSOP}_{2}$; instead of considering Morley sequences in canonical coheirs as in the below, we used $\ind$-independent sequences for the abstract relation $\ind$, in the sense of Definition 7.5 of \cite{Co15}. Note also that the property \textit{quasi-strong finite character} considered below is a property of the examples in \cite{D19}.}. We notice that the construction of Chernikov and Kaplan can be relativized to relations between sets satisfying certain axioms, obtaining new relations between sets from old ones, and iterate this construction to obtain a canonical class of coheirs in any theory.

In section 4, we study this canonical class of coheirs in $\mathrm{NSOP}_{2}$ theories. Before the development of Kaplan and Ramsey's theory of Kim-independence in $\mathrm{NSOP}_{1}$ theories in \cite{KR17}, Chernikov \cite{C15} proposed finding a theory of independence for $\mathrm{NSOP}_{2}$ theories, and the proof of our main result comes from our efforts to answer this proposal. Just as in \cite{CK09}, Chernikov and Kaplan's construction gives maximal classes in the \textit{dividing order} of Ben Yaacov and Chernikov \cite{BYC07}, we show that in $\mathrm{NSOP}_{2}$ theories our variant of this construction gives \textit{minimal} classes in the restriction of this order to coheir Morley sequences, proving an analogue of Kim's lemma. As a by-product of this construction, we also initiate the theory of independence in a class related to the $\mathrm{NATP}$ theories of Ahn and Kim \cite{AK20}, the study of which was further developed by Ahn, Kim and Lee in \cite{AK21}, showing that under this assumption Kim-forking and Kim-dividing coincide for coheir Morley sequences. (See \cite{Kr} for the question of finding an analogue for $\mathrm{NSOP}_{1}$ theories of the role that $\mathrm{NTP}_{2}$ theories play relative to simple theories, and developing Kim-independence in that analogue; that Kim-forking coincides with Kim-dividing for coheir Morley sequences in a related class gives us preliminary evidence that $\mathrm{NATP}$ completes this analogy.)

In section 5, we investigate behavior similar to $\mathrm{NSOP}_{1}$ theories in $\mathrm{NSOP}_{2}$ theories. We introduce the notion of \textit{Conant-independence}, which will generalize the relation $A \ind^{a}_{M}B$ defined by $\mathrm{acl}(MA) \cap \mathrm{acl}(MB)=M$ in the \textit{free amalgamation theories} introduced by Conant \cite{Co15} (based on concepts used to study the isometry groups of Urysohn spheres in \cite{ZT12}); see the following section. While it will end up coinciding with Kim-independence in our case, we studied a version of Conant-independence in a potentially strictly $\mathrm{NSOP}_{1}$, potentially $\mathrm{SOP}_{3}$ generalization of free amalgamation theories in \cite{GFA}. Conant-independence in $\mathrm{NSOP}_{2}$ theories can be defined as Kim-independence relative to canonical Morley sequences, just as $\ind^{a}$ is Kim-independence relative to free amalgamation Morley sequences (as in lemma 7.7 of \cite{Co15}); it can also be defined by forcing Kim's lemma on Kim-independence, requiring a formula to divide with respect to \textit{every} Morley sequence instead of just one, as suggested in tentative remarks of Kim in \cite{K09} in his discussion of \textit{strong dividing} in \textit{subtle theories}. We show that many of Ramsey and Kaplan's arguments on Kim-independence in $\mathrm{NSOP}_{1}$ theories in \cite{KR17} can be generalized to Conant-independence in $\mathrm{NSOP}_{2}$ theories, including a chain condition, symmetry and a weak independnece theorem. (But as is apparent in \cite{Co15} and \cite{GFA}, similar behavior can occur in a $\mathrm{SOP}_{3}$ theory, which is why the following section is essential to the proof of our main result.)

In section 6, we conclude the proof of Theorem \ref{3-main}. One consequence of Conant's free amalgamation axioms (say, the freedom, closure and stationarity axioms, in Defintion 2.1 in \cite{Co15}) is the following:

Let $\ind$ denote free amalgamation and $A_{1} \ind^{a}_{M} B$, $A_{2} \ind^{a}_{M} C$, and $B \ind_{M} C$ with $A_{1} \equiv_{M} A_{2} $. Then there is some $A \ind^{a}_{M} BC$ with $A \equiv_{MB} A_{1}$ and $A \equiv_{MC} A_{2}$.

We will have shown in the prior section that Conant-independence is symmetric, and that a similar fact holds, roughly, when replacing free amalgamation with canonical coheirs and $\ind^{a}$ with Conant-independence. Conant shows in \cite{Co15} that \textit{modular} free amalgamation theories must either be simple or $\mathrm{SOP}_{3}$ (see \cite{EW09} for a related result on countably categorical Hrushovski constructions), starting with a failure of forking-independence to coincide with $\ind^{a}$ (because forking-independence cannot be symmetric unless a theory is simple) and using the above fact to build up a configuration giving $\mathrm{SOP}_{3}$. Starting, analogously, with the assumption that an $\mathrm{NSOP}_{2}$ theory $T$ is $\mathrm{SOP}_{1}$, so Kim-dividing independence is not symmetric and therefore fails to coincide with Conant-independence, we simulate Conant's construction of an instance of $\mathrm{SOP}_{3}$. In short, we show that a $\mathrm{NSOP}_{2}$ theory is either $\mathrm{NSOP}_{1}$ or $\mathrm{SOP}_{3}$. But a $\mathrm{NSOP}_{2}$ theory is of course not $\mathrm{SOP}_{3}$, so it must be $\mathrm{NSOP}_{1}$.

\section{Preliminaries}
We let $a, b, c, d, e, A, B, C$ denote sets, potentially with an enumeration depending on context, and $x, y, z, X, Y, Z$ denote tuples of variables. We let $\mathbb{M}$ denote a sufficiently saturated model of a theory $T$ and let $M$ denote an elementary submodel. We write $AB$ to denote the union (or concatenation) of the sets $A$ and $B$, and write $I$, $J$, etc. for infinite sequences (or sometimes trees) of tuples or an infinite linearly ordered set.

\textbf{Relations between sets}

Roughly following the axioms for abstract independence relations in \cite{A09}, as well as others that are standard in the literature, we define the following axioms for relations $A \ind_{M} B$ between sets over a model:

Invariance: For all $\sigma \in \mathrm{Aut}(\mathbb{M})$, $A \ind_{M} B$ implies $\sigma(A) \ind_{\sigma(M)} \sigma(B)$.

Full existence: For $M \subseteq A, B \subseteq \mathbb{M}$, there is always some $A' \equiv_{M} A$ with $A \ind_{M} B$.

Left extension: If $A\ind_{M} B $ and $A \subseteq C$, there is some $B' \equiv_{A} B$ with  $C\ind_{M} B' $.

Right extension: If $A\ind_{M} B $ and $B \subseteq C$, there is some $A' \equiv_{B} A$ with  $A'\ind_{M} C $.

Left monotonicity: If $A\ind_{M} B $ and $M \subseteq A' \subseteq A$, then $A'\ind_{M} B $

Right monotonicity: If $A\ind_{M} B $ and $M \subseteq B' \subseteq B$, then $A\ind_{M} B' $

(We will refer to the two previous properties, taken together, as monotonicity.)

Symmetry: If $A \ind_{M} B$ then $B \ind_{M} A$

\textbf{Coheirs and Morley sequences}

A global type $p$ is a complete type over $\mathbb{M}$. For $p \in S(A)$ for $M \subseteq A$, we say $p$ is \textit{finitely satisfiable} over $M$ or a \textit{coheir extension} of its restriction to $M$ if every formula in $p$ is satisfiable in $M$. Global types $p$ finitely satisfiable in $M$ are \textit{invariant} over $M$: whether $\varphi(x, b)$ belongs to $p$ for $\varphi$ a formula without parameters, depends only on the type of the parameter $b$ over $M$. We write $a\ind^{u}_{M}b$ to denote that $\mathrm{tp}(a/Mb)$ is finitely satisfiable in $M$. We let $a \ind_{M}^{h} b$ denote $b\ind^{u}_{M} a$. The relation $\ind^{u}$ (over models) is well-known to satisfy all of the above properties other than symmetry. We say $\{b_{i}\}_{i \in I}$, for $I$ potentially finite, is a \textit{coheir sequence} over $M$ if $b_{i}\ind^{u}_{M} b_{<i}$ for $i \in I$. We say a coheir sequence $\{b_{i}\}_{i \in I}$, for $I$ infinite, is moreover a \textit{coheir Morley sequence} over $M$ if there is a fixed global type $p(x)$ finitely satisfiable in $M$ so that $b_{i} \models p(x)|_{\{Mb_{j}\}_{j < i}}$ for $i \in I$.  The type of a coheir Morley sequence over $M$ (indexed by a given set) is well-known to depend only on $p(x)$, and coheir Morley sequences are known to be indiscernible; the type of a coheir sequence over $M$ depends only on the global coheirs over $M$ extending the $\mathrm{tp}(b_{i}/Mb_{<i})$.

\textbf{$\mathrm{NSOP}_{1}$ theories and Kim-dividing}

In this paper we use nonstandard terminology: Kim-dividing, etc. are defined in terms of Morley sequences in \textit{invariant} types over $M$ rather than \textit{finitely satisfiable} types over $M$ in \cite{KR17}. The reason why we do this is that $\ind^{u}$ is known to satisfy left extension. This will do us no harm for our main result, though when we briefly consider Kim-forking in some $\mathrm{NATP}$ theories, we will note the nonstandard usage.

\begin{definition}
A formula $\varphi(x, b)$ \emph{Kim-divides} over $M$ if there is an coheir Morley sequence $\{b_{i}\}_{i \in \omega}$ starting with $b$ so that $\{\varphi(x, b_{i})\}_{i \in \omega}$ is inconsistent (equivalently, $k$-inconsistent for some $k$: any subset of size $k$ is inconsistent). A formula  $\varphi(x, b)$ \emph{Kim-forks} over $M$ if it implies a (finite) disjunction of formulas Kim-dividing over $M$. We write $a \ind^{Kd}_{M} b$, and say that $a$ is \emph{Kim-dividing independent} from $b$ over $M$ if $\mathrm{tp}(a/Mb)$ does not include any formulas Kim-dividing over $M$.
\end{definition}

The following follows directly from Proposition 5.2 of \cite{CR15}; see also Proposition 3.22 of \cite{KR17} (where the evident argument for the version for invariant types is given) and Theorem 5.16 of \cite{KR17} for the full symmetry characterization of $\mathrm{NSOP}_{1}$.

\begin{fact}\label{3-kimsymm}
Symmetry of $\ind^{Kd}$ implies $\mathrm{NSOP}_{1}$.
\end{fact}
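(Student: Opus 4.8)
The plan is to prove the contrapositive: assuming $T$ is $\mathrm{SOP}_{1}$, I would produce a model $M$ and tuples $a,b$ witnessing that $\ind^{Kd}$ is not symmetric, namely $b \ind^{Kd}_{M} a$ while $a \nind^{Kd}_{M} b$. The starting point is an $\mathrm{SOP}_{1}$-witness: a formula $\varphi(x,y)$ and a tree $\{b_{\eta}\}_{\eta \in 2^{<\omega}}$ whose branches $\{\varphi(x, b_{\sigma \upharpoonleft n})\}_{n}$ are consistent, and with $\{\varphi(x, b_{\eta_{2}}), \varphi(x, b_{\eta_{1} \smallfrown \langle 1\rangle})\}$ inconsistent whenever $\eta_{2} \unrhd \eta_{1} \smallfrown \langle 0\rangle$. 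The idea is that the consistent branch gives independence in one direction while the inconsistency clause, read along a suitable one-dimensional sub-object of the tree, forces Kim-dividing in the other.

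First I would normalize the tree, replacing $\{b_{\eta}\}$ (via the modeling property for strongly indiscernible trees together with coheir extensions) by a tree that is tree-indiscernible over a small model $M$ and arranged so that the ``right-turn'' sequence $d_{i} := b_{0^{i} \smallfrown \langle 1\rangle}$, with $d_{0} = b_{\langle 1\rangle}$, is a \emph{coheir} Morley sequence over $M$, with every branch still $\varphi$-consistent. The inconsistency clause then applies: taking $\eta_{1} = 0^{i}$ and $\eta_{2} = 0^{j} \smallfrown \langle 1\rangle \unrhd 0^{i} \smallfrown \langle 0\rangle$ for $i<j$ shows $\{\varphi(x, d_{i}), \varphi(x, d_{j})\}$ is inconsistent, so $\{\varphi(x, d_{i})\}_{i}$ is $2$-inconsistent and $\varphi(x, b_{\langle 1\rangle})$ Kim-divides over $M$ along $(d_{i})$. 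Letting $a$ realize the consistent branch through $\langle 1\rangle$ (the branch $\langle 1\rangle \smallfrown 0^{\omega}$) gives $\models \varphi(a, b_{\langle 1\rangle})$, so $\varphi(x, b_{\langle 1\rangle}) \in \mathrm{tp}(a/Mb_{\langle 1\rangle})$ is a Kim-dividing formula and hence $a \nind^{Kd}_{M} b_{\langle 1\rangle}$.

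The substance is to establish the reverse independence $b_{\langle 1\rangle} \ind^{Kd}_{M} a$, i.e. that \emph{no} formula in $\mathrm{tp}(b_{\langle 1\rangle}/Ma)$ Kim-divides over $M$. Here I would lean on the extraction underlying Proposition 5.6 of \cite{CR15}: after normalization the realizer $a$ of a generic branch is sufficiently generic over the tree that $b_{\langle 1\rangle}$ stays on a $\varphi$-consistent coheir Morley sequence over $Ma$, and more generally no instance over $Ma$ can be made $k$-inconsistent along a coheir Morley sequence. This is precisely the asymmetric Kim-dividing configuration that $\mathrm{SOP}_{1}$ is engineered to produce, and packaging the two halves, the pair $(b_{\langle 1\rangle}, a)$ witnesses the failure of symmetry of $\ind^{Kd}$, as required.

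I expect the main obstacle to be exactly this last step: arranging the base model $M$ so that, simultaneously, the right-turn sequence is a genuine coheir (finitely satisfiable, not merely invariant) Morley sequence witnessing Kim-dividing in one direction, while the branch realizer $a$ is generic enough to kill all Kim-dividing in the other. The passage from the invariant Morley sequences of \cite{KR17} to the coheir Morley sequences used here is routine and handled as in Proposition 3.22 of \cite{KR17}, where the argument for the invariant-type version is given; the full two-sided symmetry characterization, which this implication does not require, is Theorem 5.16 of \cite{KR17}.
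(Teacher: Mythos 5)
The paper does not actually prove this statement; it records it as a fact and cites Proposition 5.6 of \cite{CR15} (and Proposition 3.22 of \cite{KR17} for the invariant-type version), so the relevant comparison is with those arguments, whose broad contrapositive strategy you are following: extract from an $\mathrm{SOP}_{1}$ tree a configuration over a model $M$ in which a coheir Morley sequence of ``right turns'' is $2$-inconsistent for $\varphi$ while a path is consistent, and read off a failure of symmetry. The first half of your argument is fine: once the tree is normalized so that $(d_{i})$ is a coheir Morley sequence over $M$, the inconsistency clause does make $\{\varphi(x,d_{i})\}_{i}$ $2$-inconsistent, so $\varphi(x,b_{\langle 1\rangle})$ Kim-divides over $M$ and any realizer $a$ of the branch through $\langle 1\rangle$ gives $a \nind^{Kd}_{M} b_{\langle 1\rangle}$.

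The gap is the other half, $b_{\langle 1\rangle} \ind^{Kd}_{M} a$, which you assert from the branch realizer being ``sufficiently generic'' and explicitly defer. This is not a detail that can be deferred: it carries the entire content of the asymmetry, and it is a universally quantified statement --- no formula of $\mathrm{tp}(b_{\langle 1\rangle}/Ma)$ may Kim-divide over $M$ --- so it requires controlling every formula of that type against every coheir Morley sequence in $\mathrm{tp}(a/M)$. Nothing in your normalization (tree indiscernibility over $M$ plus the right-turn sequence being a coheir sequence) gives any grip on $\mathrm{tp}(b_{\langle 1\rangle}/Ma)$, and a realizer of a consistent branch has no genericity over the tree in general. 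The cited arguments do not establish such genericity either; they obtain the ``independent'' direction for free from finite satisfiability. Concretely, the array form of the $\mathrm{SOP}_{1}$ criterion produces pairs $(c_{i,0},c_{i,1})$ over $M$ with $c_{i,0}\equiv_{M\bar{c}_{<i}}c_{i,1}$, the rows forming an $M$-finitely satisfiable Morley sequence, and the independence needed to contradict symmetry is then witnessed by $\ind^{u}$: coheirs contain no dividing, hence no Kim-dividing, formulas. Your sketch never produces a coheir extension in the direction where you need independence, so as written the proof does not close. A secondary point to check if you rework this: for $(d_{i})$ to witness Kim-dividing of $\varphi(x, b_{\langle 1\rangle})$ it must be a coheir Morley sequence \emph{starting with} $b_{\langle 1\rangle}$, i.e.\ $d_{i}\ind^{u}_{M}d_{<i}$ with $d_{0}=b_{\langle 1\rangle}$; the finite satisfiability one can extract from a tree runs in a particular direction along the levels, and you need to verify it is the right one.
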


\textbf{$\mathrm{NSOP}_{2}$ theories}

A characterization of $\mathrm{SOP}_{2}$ as \textit{$k$-$\mathrm{TP}_{1}$} was proven by Kim and Kim in \cite{KK11}, where they also introduce the notion of \textit{weak $k$-$\mathrm{TP}_{1}$}, prove that it implies $\mathrm{SOP}_{1}$, and conjecture that it also implies $\mathrm{SOP}_{2}$:

\begin{definition}
The theory $T$ has \emph{weak $k$-$\mathrm{TP}_{1}$} if there exists a formula $\varphi(x, y)$ and tuples $\{b_{\eta}\}_{\eta \in \omega^{<\omega}}$ so that $\{\varphi(x, b_{\sigma \upharpoonleft n})\}_{n \in \omega}$ is consistent for any $\sigma \in \omega^{\omega}$, but for pairwise incomparable $\eta_{1} \ldots, \eta_{k} \in \omega^{< \omega}$ with common meet, $\{\varphi(x, b_{\eta_{i}})\}_{i=1}^{k}$ is inconsistent.
\end{definition}

Later, Chernikov and Ramsey, in Theorem 4.8 of \cite{CR15}, claim to show that weak $k$-$\mathrm{TP}_{1}$ implies $\mathrm{SOP}_{2}$, but their proof is incorrect; the embedded tree $\{b_{\eta}\}_{\eta \in \omega^{<\omega}}$ in the proof of that theorem is not actually strongly indiscernible over the parameter set $C$. In an earlier version of this paper, we used this result. In this section, we will introduce an equivalent form of $\mathrm{SOP}_{2}$ that will suffice for our argument, and use the same method as \cite{CR15} to give a proof that will work to show this equivalence despite failing for weak $k$-$\mathrm{TP}_{1}$.

\begin{definition}\label{3-descendingcomb}(Proposition 2.51, item IIIa, [102]). A list $\eta_{1}, \ldots, \eta_{n} \in \omega^{<\omega}$ is a \emph{descending comb} if and only if it is an antichain so that $\eta_{1} <_{\mathrm{lex}} \ldots <_{\mathrm{lex}} \eta_{n}$, and so that, for $1 \leq k < n$,  $\eta_{1} \wedge \ldots \wedge \eta_{k+1} \lhd \eta_{1} \wedge \ldots \wedge \eta_{k}.$
    
\end{definition}

So for example, all descending combs of length $n$ have the same quantifier-free type in the language $\{<_{\mathrm{lex}}, \lhd, \wedge \}$ as the descending comb $\langle 0\rangle^{n-1} \smallfrown \langle 1\rangle, \ldots \langle 1\rangle$; meanwhile, $\langle 00 \rangle, \langle 01 \rangle, \langle 10 \rangle, \langle 11 \rangle$ is an example of a lexicographically ordered antichain that is not a descending comb.

\begin{definition}
   (Definitions 11 and 12, \cite{TT12}) For tuples $\overline{\eta}, \overline{\eta}' \in \omega^{<\omega}$ of elements of $\omega^{<\omega}$, we write $\overline{\eta} \sim_{0} \overline{\eta}'$ to mean that $\overline{\eta}$ has the same quantifier-free type in the language $\{<_{\mathrm{lex}}, \lhd, \wedge \}$ as $\overline{\eta}'$. For $(b_{\eta})_{\eta \in \omega^{<\omega}}$ a tree-indexed set of tuples and $\overline{\eta} = \eta_{1}, \ldots, \eta_{n} \in \omega^{<\omega}$ an $n$-tuple of elements of $\omega^{<\omega}$, we write $b_{\overline{\eta}}=: b_{\eta_{1}}\ldots b_{\eta_{n}}$, and call $(b_{\eta})_{\eta \in \omega^{<\omega}}$ \emph{strongly indiscernible} over a set $A$ if for all tuples  $\overline{\eta}, \overline{\eta}' \in \omega^{<\omega}$ of elements of $\omega^{<\omega}$ with $\overline{\eta} \sim_{0} \overline{\eta}'$, $b_{\overline{\eta}} \equiv_{A} b_{\overline{\eta}'}$.
\end{definition}

\begin{fact} \label{3-tt} (Theorem 16, \cite{TT12}; see \cite{Sc15} for an alternate proof) Let $(b_{\eta})_{\eta \in \omega^{<\omega}}$ be a tree-indexed set of tuples, and $A$ a set. Then there is $(c_{\eta})_{\eta \in \omega^{<\omega}}$ strongly indiscernible over $A$ so that for any tuple $\overline{\eta} \in \omega^{< \omega}$ of elements of $\omega^{<\omega}$ and $\varphi(x) \in L(A)$, if $\models \varphi(b_{\overline{\eta'}})$ for all $\overline{\eta}' \sim_{0} \eta$, then $\models \varphi(c_{\overline{\eta}})$.
\end{fact}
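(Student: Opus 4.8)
The plan is to realize $(c_{\eta})_{\eta}$ as an Ehrenfeucht--Mostowski tree over $A$ that is locally based on $(b_{\eta})_{\eta}$, built by compactness from a Ramsey theorem for strong subtrees of $\omega^{<\omega}$. By compactness it suffices to produce, for every finite set $\Delta(\overline{x})$ of formulas over $A$ and every $n \in \omega$, a tree $(b^{\Delta,n}_{\eta})_{\eta \in \omega^{<\omega}}$ whose entries are among the $b_{\eta}$ and for which $\mathrm{tp}_{\Delta}(b^{\Delta,n}_{\overline{\eta}}/A)$ depends, for $\overline{\eta}$ of length $\le n$, only on the $\sim_{0}$-type of $\overline{\eta}$. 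Indeed, deciding $\psi(c_{\overline{\eta}})$ versus $\neg\psi(c_{\overline{\eta}})$ uniformly on each $\sim_{0}$-class according to the value realized by such a $b^{\Delta,n}$ yields a set of requirements on $\mathrm{tp}((c_{\eta})_{\eta}/A)$, each finite subset of which is satisfied by an appropriate $b^{\Delta,n}$; so the whole set is consistent.

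So I fix $\Delta$ and $n$ and solve the resulting finitary coloring problem on $\omega^{<\omega}$. For each finite tree-shape --- that is, each $\sim_{0}$-isomorphism type of the meet-closure of a tuple of length $\le n$ together with a marking of its distinguished nodes --- the quantity $\mathrm{tp}_{\Delta}(b_{\overline{\eta}}/A)$ takes only finitely many values as $\overline{\eta}$ ranges over realizations of that shape. Coloring each copy of the shape inside $\omega^{<\omega}$ by this value and applying Milliken's tree theorem yields an order-preserving strong subtree on which the coloring is constant; iterating over the finitely many shapes of size $\le n$ with nested strong subtrees gives a single strong subtree $S \subseteq \omega^{<\omega}$ on which $\mathrm{tp}_{\Delta}(b_{\overline{\eta}}/A)$ depends only on the $\sim_{0}$-type of $\overline{\eta}$. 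Since such an $S$ is isomorphic to $\omega^{<\omega}$ as a structure in $\{<_{\mathrm{lex}}, \lhd, \wedge\}$, the tree order, lexicographic order and meets all being inherited from the ambient tree, I fix an isomorphism $\iota \colon \omega^{<\omega} \isom S$ and set $b^{\Delta,n}_{\eta} := b_{\iota(\eta)}$. Each $b^{\Delta,n}_{\overline{\eta}}$ is then a configuration of $(b_{\eta})_{\eta}$ of the same $\sim_{0}$-type as $\overline{\eta}$ with $\sim_{0}$-invariant $\Delta$-type, as required.

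Passing to the limit produces $(c_{\eta})_{\eta}$ strongly indiscernible over $A$: if $\overline{\eta} \sim_{0} \overline{\eta}'$ then $c_{\overline{\eta}}$ and $c_{\overline{\eta}'}$ were assigned the same complete type, so $c_{\overline{\eta}} \equiv_{A} c_{\overline{\eta}'}$. Moreover, because at each finite stage the type prescribed to a $\sim_{0}$-class was literally $\mathrm{tp}_{\Delta}(b_{\overline{\eta}'}/A)$ for an actual configuration $\overline{\eta}' \sim_{0} \overline{\eta}$ of $(b_{\eta})_{\eta}$, for every finite $\Delta$ and every $\overline{\eta}$ there is $\overline{\eta}' \sim_{0} \overline{\eta}$ with $\mathrm{tp}_{\Delta}(c_{\overline{\eta}}/A) = \mathrm{tp}_{\Delta}(b_{\overline{\eta}'}/A)$. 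This local basing on $(b_{\eta})_{\eta}$ is the sense in which $c_{\overline{\eta}} \equiv_{A} b_{\overline{\eta}'}$ is to be read.

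I expect the genuine difficulty to lie entirely in the tree-Ramsey input and its compatibility with $\sim_{0}$, not in the compactness packaging. Two points need care. First, $\omega^{<\omega}$ is infinitely branching, so one must either invoke an infinitely-branching form of the Halpern--L\"auchli/Milliken theorem or reduce to a finitely branching skeleton, using that each fixed finite configuration meets only finitely many immediate successors at any node. Second, one must check that order-preserving strong-subtree embeddings preserve exactly the data recorded by $\{<_{\mathrm{lex}}, \lhd, \wedge\}$, so that being constant on the copies of a shape coincides with being $\sim_{0}$-invariant; equivalently, that the finite trees in this language form a Ramsey class. This verification is precisely the content supplied by \cite{TT12} and \cite{Sc15}.
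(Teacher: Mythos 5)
The paper does not prove this statement: it is imported as a black box from Theorem 16 of \cite{TT12}, with \cite{Sc15} cited for an alternate proof, so there is no internal argument to compare yours against. Your proposal is essentially the standard proof from those references: reduce by compactness to a finitary homogenization problem for a finite set of formulas $\Delta$ and tuples of bounded length, solve it with Milliken's theorem on strong subtrees, and check that strong-subtree copies realize every $\sim_{0}$-type. The outline is correct, and you have correctly located where the real work lies (an infinitely-branching form of Milliken, or a reduction to a finitely branching skeleton followed by a second compactness step to recover an $\omega^{<\omega}$-indexed tree, together with the verification that order-preserving strong-subtree embeddings preserve exactly the data of $\{<_{\mathrm{lex}}, \lhd, \wedge\}$). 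One packaging point to tighten: as written you ``decide'' $\psi$ versus $\neg\psi$ on each $\sim_{0}$-class according to a particular $b^{\Delta,n}$, but these decisions need not cohere as $\Delta$ and $n$ grow, so the union of the resulting requirements is not obviously finitely satisfiable in the form you state it. The clean fix is either to take an ultrafilter or diagonal limit of the finite-stage EM-types before realizing, or to phrase each requirement as the finite disjunction, over the $\Delta$-types actually realized by configurations of $(b_{\eta})_{\eta}$ of the given shape, of ``$x_{\overline{\eta}}$ realizes this $\Delta$-type for every $\overline{\eta}$ in the class''; finite satisfiability of that set is then exactly what the Ramsey step provides.
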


\begin{definition}
The theory $T$ has $k$-$\mathrm{DCTP}_{1}$ if there exists a formula $\varphi(x, y)$ and tuples $\{b_{\eta}\}_{\eta \in \omega^{<\omega}}$ so that $\{\varphi(x, b_{\sigma \upharpoonleft n})\}_{n \in \omega}$ is consistent for any $\sigma \in \omega^{\omega}$, but for any descending comb  $\eta_{1} \ldots, \eta_{k} \in \omega^{< \omega}$, $\{\varphi(x, b_{\eta_{i}})\}_{i=1}^{k}$ is inconsistent.

\end{definition}

\begin{lemma}\label{3-dctp1}
For any $k> 1$, a theory has $\mathrm{SOP}_{2}$ if and only if it has $k$-$\mathrm{DCTP}_{1}$.
\end{lemma}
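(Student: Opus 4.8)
The plan is to prove the two implications separately; the forward direction $\mathrm{SOP}_2 \Rightarrow k\text{-}\mathrm{DCTP}_1$ is routine, and the reverse direction carries all the content. Throughout I take $k \geq 2$ (for $k = 1$ both properties are simply incompatible with branch-consistency, so the stated equivalence is only meaningful for $k \geq 2$). For the forward direction I would start from a tree $(b_\eta)_{\eta \in 2^{<\omega}}$ and formula $\varphi$ witnessing $\mathrm{SOP}_2$ and transport it along a fixed embedding $\iota : \omega^{<\omega} \to 2^{<\omega}$ preserving the tree order $\lhd$ and incomparability (for instance coding $\langle n_0, \ldots, n_{\ell-1}\rangle$ as the binary string $1^{n_0}0 \cdots 1^{n_{\ell-1}}0$), and set $b'_\eta = b_{\iota(\eta)}$ with the same $\varphi$. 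A branch of $\omega^{<\omega}$ maps into a single branch of $2^{<\omega}$, so branch-consistency is inherited; and since a descending comb is in particular an antichain, any two of its nodes map to an incomparable pair, whose $\varphi$-instances are already inconsistent, so (as $k \geq 2$) the full $k$-element set is inconsistent. Hence $k\text{-}\mathrm{DCTP}_1$ holds.

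For the reverse direction I would begin with a tree $(b_\eta)_{\eta \in \omega^{<\omega}}$ and formula $\varphi$ witnessing $k\text{-}\mathrm{DCTP}_1$ and apply the modeling fact (Theorem 16 of \cite{TT12}) to obtain a strongly indiscernible tree $(c_\eta)_{\eta \in \omega^{<\omega}}$. The first point to check is that $k\text{-}\mathrm{DCTP}_1$ transfers to $(c_\eta)$, and this works precisely because the two relevant shapes are each a single $\sim_0$-class. A finite chain has a single quantifier-free type in $\{<_{\mathrm{lex}}, \lhd, \wedge\}$, so any branch-prefix of $(c_\eta)$ has the type of a chain in $(b_\eta)$, which is consistent (it extends to a branch), and by compactness each branch of $(c_\eta)$ is consistent; likewise all descending combs of length $k$ share one $\sim_0$-type, so each such comb in $(c_\eta)$ has the type of a descending comb in $(b_\eta)$ and is inconsistent. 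It then suffices, after a meet-and-incomparability-preserving re-embedding as in the forward direction, to produce a tree witnessing $2\text{-}\mathrm{DCTP}_1$ (a length-$2$ descending comb being exactly an incomparable pair with a chosen lexicographic order), i.e. to push the length of the inconsistent comb down from $k$ to $2$.

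Following the method of \cite{CR15}, I would carry out this reduction by isolating a parameter set $C$ inside the strongly indiscernible tree and an auxiliary tree whose incomparable nodes correspond, over $C$, to length-$k$ descending combs, while its branches correspond to consistent chain-type configurations, and then reading $\mathrm{SOP}_2$ off the auxiliary tree. The repair relative to \cite{CR15} is that this auxiliary tree must be \emph{strongly indiscernible over $C$}, and descending combs are exactly what makes this hold, whereas the common-meet antichains of weak $k\text{-}\mathrm{TP}_1$ do not (this is the gap noted above). The natural mechanism is to let $m \leq k$ be least with length-$m$ combs inconsistent (so length-$(m-1)$ combs are consistent) and to exploit that, along a spine such as $\{0^n\}$, the right-branching teeth $\{0^n\smallfrown\langle 1\rangle\}$ themselves form a descending comb: the hypothesis ``length-$(m-1)$ combs consistent, length-$m$ combs inconsistent'' thus becomes a statement about an indiscernible comb that is $(m-1)$-consistent and $m$-inconsistent, to be played against branch-consistency.

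The hard part will be exactly this last reduction from length $k$ to length $2$, and the difficulty is structural. In any theory that genuinely has $\mathrm{SOP}_2$ the only consistent configurations are chains, so a valid $\mathrm{SOP}_2$ pattern must have chain-type branches; one therefore cannot simply bundle a fixed comb of teeth into each node, since such a comb is already inconsistent once its length is $\geq 2$. Consequently the step from length-$k$ to length-$2$ inconsistency cannot be a naive witness construction: it must \emph{extract} $2$-inconsistency from $k$-inconsistency, and the role of strong indiscernibility together with the $\sim_0$-rigidity of descending combs is to control precisely which mixed spine-and-teeth configurations remain consistent, so that the consistency of short combs and the inconsistency of length-$k$ combs can be forced to collide rather than merely reproducing a $k$-inconsistent comb. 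Making this collision yield an actual length-$2$ inconsistency, with the auxiliary tree strongly indiscernible over $C$, is the crux where the argument of \cite{CR15} has to be redone with descending combs in place of common-meet antichains.
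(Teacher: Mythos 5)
Your forward direction is correct, and your reduction to a strongly indiscernible tree (paths and descending combs are each a single $\sim_{0}$-class, so the witnessing configuration transfers by Fact 2.6) is exactly what the paper does. But the proposal stops where the real work begins: you name the passage from length-$k$ to length-$2$ inconsistency as ``the crux where the argument of \cite{CR15} has to be redone'' and never carry it out, so this is a genuine gap rather than a complete proof. Moreover, the pivot you sketch --- take $m$ least with length-$m$ combs inconsistent --- is not the one that works. The paper instead takes $n$ maximal such that the union of $n$ entire infinite ``tooth-paths'' $\{\varphi(x, b_{\langle 0\rangle^{i}\smallfrown\langle 1\rangle\smallfrown\langle 0\rangle^{\alpha}}): i<n,\ \alpha<\omega\}$ is consistent (path-consistency gives $n\geq 1$; picking one node from each tooth gives a descending comb, so $k$-comb inconsistency bounds $n$). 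The infinitude is essential, because the endgame is the path collapse lemma (Lemma 4.6 of \cite{CR15}): one needs a realization $d$ of a consistent union of full tooth-paths and a type $p(y,\overline{z})=\mathrm{tp}(d, I_{0}/C)$ in an infinite tuple $\overline{z}$ with $p(y,I_{0})\cup p(y,I_{1})$ inconsistent, together with $I_{0}$ indiscernible over $dC$. Finite combs, however they are played against branch consistency, do not produce such a $d$ or such a $p$.

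Concretely, the missing steps are: (i) set $C$ equal to the first $n-1$ tooth-paths and verify that the subtree above $\mu=\langle 0\rangle^{n-1}$ is strongly indiscernible over $C$ --- this is precisely where descending combs repair the error in \cite{CR15}, since $\mu$ sits strictly above every meet of two or more of the teeth indexing $C$ and is incomparable to the remaining tooth, which fails for the common-meet antichains of weak $k$-$\mathrm{TP}_{1}$; (ii) realize the $n$ jointly consistent tooth-paths by some $d$ and, by Ramsey and compactness, make the $n$-th tooth-path indiscernible over $dC$; (iii) apply the path collapse lemma to the two tooth-paths hanging off the two immediate successors of $\mu$, whose union over $C$ is inconsistent by maximality of $n$. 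Your proposal gestures at (i) but supplies neither the correct extremal quantity, nor the realization $d$, nor the invocation of the collapse lemma that actually converts the configuration into $\mathrm{SOP}_{2}$.
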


\begin{proof}
($\Rightarrow$) The property $2$-$\mathrm{DCTP}_{1}$ follows directly from Fact 4.2, \cite{CR15}.

($\Leftarrow$) We follow the proof of theorem 4.8 of (\cite{CR15}), which is incorrect for the claimed result. Let $\{b_{\eta}\}_{\eta \in \omega^{<\omega}}$ witness $\mathrm{DCTP}_{1}$ with the formula $\varphi(x, y)$. By fact \ref{3-tt}, we can assume $\{b_{\eta}\}_{\eta \in \omega^{<\omega}}$ is strongly indiscernible (as paths and descending combs are preserved under $\sim_{0}$-equivalence), and will produce a witness to $\mathrm{SOP}_{2}$. Let $\eta_{i} = \langle 0 \rangle^{i} \smallfrown \langle 1 \rangle $ (so that, say, $\eta_{n}, \ldots, \eta_{0}$ will form a descending comb), and let $n$ be maximal so that 

$$\{\varphi(x, b_{\eta_{i} \smallfrown \langle 0 \rangle^{\alpha}}): i < n, \alpha < \omega \}$$ is consistent; by consistency of the paths, $n$ will be at least $1$, and by inconsistency of descending combs of size $k$, $n$ will be at most $k$. Let $C = \{ b_{\eta_{i} \smallfrown \langle 0 \rangle^{\alpha}}: i < n-1, \alpha < \omega \}$. We see that, say, $\mu = \langle 0 \rangle_{n-1}$ sits strictly above the meets of any two or more of the $\eta_{i}$ for $i < n-1$ in the order $\lhd$, and is incomparable to and lexicographically to the left of $\eta^{n-2}$ when $n > 1$, so the appropriately tree-indexed subset $\{c_{\eta}\}_{\eta \in \omega^{<\omega}}$ of $\{b_{\eta}\}_{\eta \in \omega^{<\omega}}$ consisting of those $b_{\eta}$ with $\mu \unlhd \eta$ (that is, where $c_{\eta}=b_{\mu \smallfrown \eta}$) really is strongly indiscernible over $C$. By strong indiscernibility of $\{b_{\eta}\}_{\eta \in \omega^{< \omega}}$ and the fact that $\{\varphi(x, b_{\eta_{i} \smallfrown \langle 0 \rangle^{\alpha}}): i < n, \alpha < \omega \}$ is consistent, $ \{\varphi(x, c_{\langle 0 \rangle \smallfrown \langle 0 \rangle^{\alpha}}): i < n, \alpha < \omega \}\cup \{\varphi(x, c): c \in C\}$ is consistent; let $d$ realize it, and by Ramsey, compactness and an automorphism over $C$, we can assume $\{c_{\langle 0 \rangle \smallfrown \langle 0 \rangle^{\alpha}}\}_{\alpha < \omega}$ is indiscernible over $dC$. On the other hand, for $p(y, \overline{z})= \mathrm{tp}(d, \{c_{\langle 0 \rangle \smallfrown \langle 0 \rangle^{\alpha}}\}_{\alpha < \omega}/ C)$, we see that $p(y, \{c_{\langle 0 \rangle \smallfrown \langle 0 \rangle^{\alpha}}\}_{\alpha < \omega}) \cup p(y, \{c_{\langle 1 \rangle \smallfrown \langle 0 \rangle^{\alpha}}\}_{\alpha < \omega})$ is inconsistent, by strong indiscernibility of $\{b_{\eta}\}_{\eta \in \omega^{< \omega}}$ and inconsistency (by maximality of $n$) of $\{\varphi(x, b_{\eta_{i} \smallfrown \langle 0 \rangle^{\alpha}}): i \leq n, \alpha < \omega \}$ (noting that, say, $p(y, \{c_{\langle 0 \rangle \smallfrown \langle 0 \rangle^{\alpha}}\}_{\alpha < \omega})$ contains $\{ b_{\eta_{i} \smallfrown \langle 0 \rangle^{\alpha}}: i < n-1, \alpha < \omega \}$). This is exactly what the ``path collapse lemma," Lemma 4.6 of \cite{CR15}, tells us that we need to obtain $\mathrm{SOP}_{2}$.
\end{proof}

Though the proof of Theorem 4.8 of \cite{CR15} is incorrect, that theorem (albeit, not a ``local" version) will be a corollary of our main result, Theorem \ref{3-main}, and the result of \cite{KK11} that weak $k$-$\mathrm{TP}_{1}$ implies $\mathrm{SOP}_{1}$. (Note that $\mathrm{SOP}_{2}$ is just weak $2$-$\mathrm{TP}_{1}$).

\begin{corollary}
    (to Theorem \ref{3-main}) For any $k$, a theory has weak $k$-$\mathrm{TP}_{1}$ if and only if it has $\mathrm{SOP}_{2}$.
\end{corollary}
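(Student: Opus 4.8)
The plan is to prove both implications by reducing to results already available: Theorem 1.6 together with the theorem of Kim and Kim \cite{KK11} that weak $k$-$\mathrm{TP}_{1}$ implies $\mathrm{SOP}_{1}$. The observation that organizes everything is the one recorded just above the corollary, namely that $\mathrm{SOP}_{2}$ is exactly weak $2$-$\mathrm{TP}_{1}$; the two directions then sit on opposite sides of the monotonicity of the weak $k$-$\mathrm{TP}_{1}$ hierarchy in $k$, with the content concentrated in the direction that collapses the hierarchy.

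For the direction $\mathrm{SOP}_{2} \Rightarrow$ weak $k$-$\mathrm{TP}_{1}$, I would first fix a tree $\{b_{\eta}\}_{\eta \in \omega^{<\omega}}$ and a formula $\varphi(x,y)$ witnessing weak $2$-$\mathrm{TP}_{1}$, which is available precisely because $\mathrm{SOP}_{2}$ and weak $2$-$\mathrm{TP}_{1}$ coincide. I then claim the same data witnesses weak $k$-$\mathrm{TP}_{1}$ for every $k \geq 2$. The path-consistency requirement is literally identical, so only the inconsistency clause must be checked: given pairwise incomparable $\eta_{1}, \ldots, \eta_{k}$ with a common meet, the pair $\eta_{1}, \eta_{2}$ is in particular incomparable, so $\{\varphi(x, b_{\eta_{1}}), \varphi(x, b_{\eta_{2}})\}$ is already inconsistent by weak $2$-$\mathrm{TP}_{1}$, whence the larger set $\{\varphi(x, b_{\eta_{i}})\}_{i=1}^{k}$ is inconsistent a fortiori. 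This direction is thus pure monotonicity in $k$ and carries no real content.

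The substantive direction is weak $k$-$\mathrm{TP}_{1} \Rightarrow \mathrm{SOP}_{2}$, and here I would simply chain two inputs. First, by \cite{KK11} any theory with weak $k$-$\mathrm{TP}_{1}$ is $\mathrm{SOP}_{1}$, i.e.\ not $\mathrm{NSOP}_{1}$. Second, I invoke Theorem 1.6 in contrapositive form: since every $\mathrm{NSOP}_{2}$ theory is $\mathrm{NSOP}_{1}$, every theory failing to be $\mathrm{NSOP}_{1}$ also fails to be $\mathrm{NSOP}_{2}$, i.e.\ is $\mathrm{SOP}_{2}$. Composing, weak $k$-$\mathrm{TP}_{1}$ forces $\mathrm{SOP}_{1}$, which forces $\mathrm{SOP}_{2}$, completing the equivalence for all $k \geq 2$.

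The main obstacle is that this second direction is not self-contained: all of its weight rests on Theorem 1.6, the main theorem of the paper, and it is exactly the implication that Chernikov and Ramsey attempted directly in \cite{CR15} by a tree-combinatorial argument shown here to be flawed. In effect the corollary does not admit the short direct proof attempted there; it is instead obtained as a (non-local) consequence of the independence-theoretic machinery culminating in Theorem 1.6. I would therefore present the corollary purely as this composition, making explicit that the genuinely hard work has been relocated into Theorem 1.6 rather than into any direct manipulation of the tree $\{b_{\eta}\}$.
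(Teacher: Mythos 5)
Your proposal is correct and is exactly the argument the paper intends (the paper only sketches it in the remark preceding the corollary): weak $k$-$\mathrm{TP}_{1}$ gives $\mathrm{SOP}_{1}$ by Kim--Kim, hence $\mathrm{SOP}_{2}$ by the contrapositive of Theorem 1.6, while the converse is the trivial monotonicity from weak $2$-$\mathrm{TP}_{1}$ (i.e.\ $\mathrm{SOP}_{2}$) to weak $k$-$\mathrm{TP}_{1}$. Nothing further is needed.
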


\section{Canonical coheirs in any theory}

The following section will require no assumptions on $T$. Iterating a similar construction to the one used by Chernikov and Kaplan in \cite{CK09} to prove the equivalence of forking and dividing for formulas in $\mathrm{NTP}_{2}$ theories, we will contruct a canonical class of coheir extensions in any theory. This class will end up satisfying a variant of the ``Kim's lemma for Kim-dividing" in $\mathrm{NSOP}_{1}$ theories (Theorem 3.16 of \cite{KR17}) when considered in a $\mathrm{NSOP}_{2}$ theory.

\begin{proposition}\label{3-ck}
Let $T$ be any theory. Consider relations $\ind$ between sets over a model that are stronger that $\ind^{h}$, satisfy invariance, monotonicity, full existence and right extension, and satisfy the coheir chain condition: if $a \ind_{M}b$ and $I =\{b_{i}\}_{i \in \omega}$ is a coheir Morley sequence starting with $b$, then there is some $I' \equiv_{M} I $ with $a \ind_{M} I'$ and each term of $I'$ satisfying $\mathrm{tp}(b/Ma)$. There is a weakest such relation $\ind^{\mathrm{CK}}$.
\end{proposition}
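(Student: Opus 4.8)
The plan is to take $\ind^{\mathrm{CK}}$ to be the \emph{disjunction} of the whole class: declare $a \ind^{\mathrm{CK}}_M b$ to hold precisely when $a \ind_M b$ for some relation $\ind$ satisfying invariance, monotonicity, existence, right extension, the hypothesis of being stronger than $\ind^h$, and the coheir chain condition. With this definition weakness is automatic, since every member of the class implies $\ind^{\mathrm{CK}}$ by construction; so the entire content is to check that $\ind^{\mathrm{CK}}$ is \emph{itself} a member, i.e.\ that the class is closed under arbitrary disjunctions and is nonempty.

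First I would check that each required property passes to disjunctions. Being stronger than $\ind^h$ survives because a union of subrelations of $\ind^h$ is again contained in $\ind^h$, and invariance is immediate. Each of the remaining properties asserts that a given instance of $\ind$ (possibly together with a purely combinatorial side condition) forces further data --- a conjugate, an extension, or a straightened coheir Morley sequence --- again related by $\ind$; so given an instance of $\ind^{\mathrm{CK}}$ one fixes a single disjunct $\ind$ witnessing it, applies that disjunct's version of the property, and observes that the resulting data witnesses $\ind^{\mathrm{CK}}$. In particular monotonicity and right extension pass through, and so does the coheir chain condition: if $a \ind^{\mathrm{CK}}_M b$, choose $\ind$ with $a \ind_M b$; the sequence $I'$ produced by the chain condition for $\ind$ then satisfies $a \ind^{\mathrm{CK}}_M I'$. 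Existence passes through as soon as the class is nonempty, using the witness supplied by any one member.

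Everything therefore reduces to exhibiting a single relation in the class, which is where the Chernikov--Kaplan construction of \cite{CK09} enters, in relativized form. I would start from $\ind^h$, which (as recorded for $\ind^u$ in the preliminaries) satisfies every listed property except possibly the chain condition, and define a one-step operator $\Phi$ by declaring $a \, \Phi(\ind)_M b$ to hold iff $a \ind_M b$ and, for \emph{every} coheir Morley sequence $I$ over $M$ starting with $b$, there is $I' \equiv_M I$ with $a \ind_M I'$ and every term of $I'$ realizing $\mathrm{tp}(b/Ma)$. Then $\Phi(\ind) \subseteq \ind$, so iterating $\Phi$ from $\ind^h$ and intersecting at limit stages gives a decreasing chain of relations that stabilizes at a fixed point $\ind^{\ast}$ by a cardinality bound. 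At the fixed point the defining clause of $\Phi$ is exactly the coheir chain condition for $\ind^{\ast}$; invariance and the containment $\ind^{\ast} \subseteq \ind^h$ are evident throughout.

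The main obstacle --- and the genuine use of finite-satisfiability combinatorics --- is to show that \emph{existence}, and to a lesser degree monotonicity and right extension, survive the construction: both the successor steps and, crucially, the limit (intersection) stages, at which existence is not preserved for formal reasons. The point is that coheir Morley sequences over $M$ starting from a given $b$ fall into only boundedly many classes, one for each global coheir of $\mathrm{tp}(b/M)$; exploiting this, one realizes a copy of $A$ generically over a sufficiently homogeneous array of conjugates of $B$ (as in the array/tree arguments of \cite{CK09}) and extracts an independent copy $A' \equiv_M A$ witnessing $\Phi(\ind)$-existence, the array formulation simultaneously keeping track of the monotonicity and extension data, while a compactness and stabilization argument controls the limit stages. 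Verifying that this extraction goes through uniformly, so that $\ind^{\ast}$ retains existence and hence lies in the class, is the heart of the matter; once it is in hand the class is nonempty and $\ind^{\mathrm{CK}}$ is its weakest member.
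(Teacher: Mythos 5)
Your reduction of the ``weakest'' clause is correct and is in fact a little slicker than what the paper does: each defining property passes to unions of members of the class (existence once the class is nonempty), so the union of the class is automatically its weakest member. The paper instead builds a concrete relation $\ind^{\mathrm{CK}}=\bigcap_{n<\omega}\ind^{(n)}$ and proves weakness by induction, showing every member of the class is contained in each $\ind^{(n)}$; the two descriptions yield the same relation. So everything rides on your construction of a single member of the class, and there the proposal has a genuine gap: the two devices that make the iteration actually work are pointed at (``the heart of the matter'') but never supplied, and the operator you iterate is not the right one.

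Concretely: (i) your $\Phi(\ind)$ imposes the chain-condition clause directly, whereas the paper's derived relation is defined at the level of formulas --- $a\ind'_{M}b$ iff $\mathrm{tp}(a/Mb)$ contains no formula that $h^{\ind}$-Kim-forks with respect to $\mathrm{tp}(a/M)$, where $h^{\ind}$-inconsistency of a set of formulas means it has no realization of the given type that is $\ind$-independent from the parameters. That formula-level definition is what buys right extension (by the standard forking argument) and \emph{quasi-strong finite character} (the locus of $a\ind'_{M}b$ on realizations of fixed types is type-definable). Your $\Phi(\ind)$, with its existential quantifier over copies $I'$ of each coheir Morley sequence, is a projection of a type-definable set, so neither property is clear; and without type-definability there is no mechanism to push existence (or the chain condition) through the intersection at limit stages --- this is exactly the ``formal reason'' you flag but do not repair, while the paper handles it by compactness on a decreasing chain of nonempty closed sets. (ii) Preservation of existence at the successor step is the real mathematical content of the proposition, and the paper proves it via a relativized ``broom lemma'': $h^{\ind}$-Kim-forking implies $h^{\ind}$-quasi-dividing over the associated invariant ideal, so a type $p(y)$ with no $\ind'$-independent realization over $b$ yields $M$-conjugates $c_{1},\dots,c_{n}$ of a single parameter whose instances are jointly $h^{\ind}$-inconsistent, contradicting existence for $\ind$ applied to $\mathrm{tp}(c_{1}\cdots c_{n}/M)$. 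Your appeal to ``boundedly many classes of coheir Morley sequences'' and ``a sufficiently homogeneous array'' gestures at this induction on disjuncts but does not constitute it. Note finally that no transfinite fixed point is needed: $\omega$ steps suffice because Lemma 3.11 gives the chain condition for $\ind^{(n+1)}$ \emph{relative to} $\ind^{(n)}$, and quasi-strong finite character plus compactness upgrades this to the chain condition for the intersection relative to itself.
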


The ``weakest" clause is not necessary for the main result, but we include it anyway to show our construction is canonical.

We start by relativizing the notions of Kim-dividing, Kim-forking, and \textit{quasi-dividing} (Definition 3.2 of \cite{CK09}) to an $M$-invariant ideal on the definable subsets of $\mathbb{M}$.

\begin{definition}
Let $\mathcal{I}$ be an $M$-invariant ideal on the definable subsets of $\mathbb{M}$. A formula $\varphi(x, b)$ \emph{$\mathcal{I}$-Kim-divides} over $M$ if there is a coheir Morley sequence $\{b_{i}\}_{i \in \omega}$ starting with $b$ so that for some $k$, the intersection of some (any) $k$-element subset of $\{\varphi(\mathbb{M}, b_{i})\}_{i \in \omega}$ belongs to $\mathcal{I}$. We say $\varphi(x, b)$ \emph{$\mathcal{I}$-Kim-forks} over $\mathbb{M}$ if it implies a (finite) disjunction of formulas $\mathcal{I}$-Kim-dividing over $M$. We say $\varphi(x, b)$ \emph{$\mathcal{I}$-quasi-divides} over $M$ if there are $b_{1}, \ldots, b_{n}$ with $b \equiv_{M} b_{i}$ so that $\cap_{i=1}^{n} \varphi(\mathbb{M}, b_{i}) \in \mathcal{I}$.

We say $\varphi(x, b) \vdash^{\mathcal{I}} \psi(x, c)$ if $\varphi(\mathbb{M}, b) \backslash \psi(\mathbb{M}, c)  \in \mathcal{I}$.  
\end{definition}

The proof of the following lemma is adapted straightforwardly from the proof of the ``broom lemma" of Chernikov and Kaplan (Lemma 3.1 of \cite{CK09})\footnote{Alex Kruckman, in a personal communication with the author, discussed an alternative to this proof for showing the properness of the ideal corresponding to the independence result of $\ind^{\mathrm{CK}}$, with the broom lemma as a corollary, which works for invariant Morley sequences as well as coheir Morley sequences; it is based on unpublished work of James Hanson on the concept of ``fracturing," a generalization of quasi-forking and quasi-dividing.}. For the convenience of the reader we give a simplified proof of the modified version; note that this version is just a rephrasing in terms of ideals of Lemma 4.19 in \cite{Che14}:

\begin{lemma}\label{3-broom}
(``$\mathcal{I}$-broom lemma") Suppose 

$$\alpha(x, e) \vdash^{\mathcal{I}} \psi(x, c) \vee \bigvee_{i=1}^{N}\varphi_{i}(x, a_{i})$$ with $\varphi_{i}(x, a_{i})$ $\mathcal{I}$-Kim-dividing over $M$ with respect to $P(x)$ and $c \ind^{u}_{M}a_{1}\ldots a_{N}$. Then there are some $e_{1}, \ldots e_{m}$ with $e_{i} \equiv_{M} e$ so that $\bigwedge_{i=1}^{m} \alpha(x, e_{i}) \vdash^{\mathcal{I}} \psi(x, c) $. In particular, $\mathcal{I}$-Kim-forking implies $\mathcal{I}$-quasi-dividing over $M$.
\end{lemma}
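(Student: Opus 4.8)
The plan is to adapt the Chernikov--Kaplan broom lemma argument, relativized to the ideal $\mathcal{I}$ and to coheir (rather than general invariant) Morley sequences. Throughout I work inside $P(\mathbb{M})$, replacing each solution set $\varphi(\mathbb{M},\cdot)$ by its intersection with $P(\mathbb{M})$; since $\mathcal{I}$ is an ideal and every Boolean manipulation below only forms finite unions of sets in $\mathcal{I}$, this relativization is harmless. The engine of the proof is the following observation about $\vdash^{\mathcal{I}}$: if $\alpha(x,e_{l})\vdash^{\mathcal{I}}\psi(x,c)\vee\varphi(x,b_{l})$ for $l<k$, then conjoining and distributing gives $\bigwedge_{l<k}\alpha(x,e_{l})\vdash^{\mathcal{I}}\psi(x,c)\vee\bigwedge_{l<k}\varphi(x,b_{l})$, so that if the solution set $\bigcap_{l<k}\varphi(\mathbb{M},b_{l})$ lies in $\mathcal{I}$ the second disjunct disappears and $\bigwedge_{l<k}\alpha(x,e_{l})\vdash^{\mathcal{I}}\psi(x,c)$. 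I would prove the lemma by induction on the number $N$ of distinct $\mathcal{I}$-Kim-dividing disjuncts, the base case $N=0$ being immediate with $m=1$ and $e_{1}=e$.

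For the inductive step I single out the last disjunct $\varphi_{N}(x,a_{N})$ and let $(a_{N}^{t})_{t<\omega}$ be a coheir Morley sequence over $M$ starting at $a_{N}$ witnessing its $\mathcal{I}$-Kim-dividing, so some $k$-fold intersection of the $\varphi_{N}(\mathbb{M},a_{N}^{t})$ lies in $\mathcal{I}$. The key maneuver is to transport the remaining data $e'=(e,a_{1},\dots,a_{N-1})$ along this sequence while holding fixed a single copy of $c$. Since $c\ind^{u}_{M}a_{1}\dots a_{N}$ gives $c\ind^{u}_{M}a_{N}$, the coheir chain condition for $\ind^{u}$ lets me replace $(a_{N}^{t})$ by an $Ma_{N}$-conjugate sequence that still witnesses the $\mathcal{I}$-Kim-dividing (its $M$-type is unchanged) but is now indiscernible over $Mc^{*}$ for a fixed $c^{*}\equiv_{M}c$ with $(e',c^{*},a_{N})\equiv_{M}(e',c,a_{N})$, and with $c^{*}$ coheir-independent from the whole new sequence. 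Transporting $e'$ by automorphisms fixing $Mc^{*}$ and sending $a_{N}$ to each $a_{N}^{t}$ produces conjugates $e'^{(t)}=(e^{(t)},a_{1}^{(t)},\dots,a_{N-1}^{(t)})$ with $\alpha(x,e^{(t)})\vdash^{\mathcal{I}}\psi(x,c^{*})\vee\bigvee_{i<N}\varphi_{i}(x,a_{i}^{(t)})\vee\varphi_{N}(x,a_{N}^{t})$ for every $t$, the crucial point being that $\psi$ stays attached to the same $c^{*}$ throughout. Conjoining $k$ copies and applying the engine absorbs the $\varphi_{N}$-disjunct into $\mathcal{I}$, leaving $\bigwedge_{t<k}\alpha(x,e^{(t)})\vdash^{\mathcal{I}}\psi(x,c^{*})\vee\bigvee_{t<k}\bigvee_{i<N}\varphi_{i}(x,a_{i}^{(t)})$.

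At first sight this blows the number of disjuncts up to $k(N-1)$, so to make the induction decrease I would bundle: for each fixed $i<N$ the parameters $(a_{i}^{(t)})_{t<k}$ are the $i$-th projection of the single coheir Morley sequence $(e'^{(t)},a_{N}^{t})_{t}$, hence themselves lie on one common coheir Morley sequence over $M$. A short distribution argument then shows that for parameters $d_{0},\dots,d_{p-1}$ on one coheir Morley sequence the single formula $\bigvee_{s<p}\varphi_{i}(x,d_{s})$ again $\mathcal{I}$-Kim-divides: along a stretched coheir Morley sequence of the $p$-tuple, a large intersection distributes as a finite union of intersections taken along the underlying sequence, each of which is in $\mathcal{I}$, and a finite union of $\mathcal{I}$-sets is in $\mathcal{I}$. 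Bundling the $k$ conjugates of each $\varphi_{i}$ into one such disjunction turns the right-hand side into a disjunction of only $N-1$ distinct $\mathcal{I}$-Kim-dividing formulas whose parameters remain jointly coheir-independent from $c^{*}$; the induction hypothesis applied to the conjunction $\bigwedge_{t}\alpha(x,e^{(t)})$ (as the new $\alpha$) finishes, and a final automorphism sending $c^{*}$ back to $c$ restores the original $c$ in the conclusion.

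The main obstacle is precisely the step hidden in the phrase ``still witnesses the $\mathcal{I}$-Kim-dividing'': because we are in an arbitrary theory with no Kim's lemma available, I cannot replace the witnessing sequence by a more convenient one, so the coheir chain condition for $\ind^{u}$ must simultaneously (i) keep the \emph{same} sequence witnessing $\mathcal{I}$-Kim-dividing, (ii) make it indiscernible over the fixed base $Mc^{*}$ so the transport is legitimate, and (iii) deliver \emph{joint} coheir-independence of $c^{*}$ from the whole sequence, and hence from all freshly produced parameters, so that the induction hypothesis genuinely applies; coordinating these three with the bundling that forces the count of schemas to drop is the delicate heart of the argument. Finally, the ``in particular'' clause falls out by taking $\psi$ to be a contradiction and $c$ empty: the hypothesis $c\ind^{u}_{M}a_{1}\dots a_{N}$ then holds vacuously, and the conclusion $\bigwedge_{l}\varphi(x,b_{l})\vdash^{\mathcal{I}}\bot$ says exactly that $\bigcap_{l}\varphi(\mathbb{M},b_{l})\in\mathcal{I}$ for $M$-conjugates $b_{l}$ of $b$, which is $\mathcal{I}$-quasi-dividing over $M$.
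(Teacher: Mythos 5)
Your overall skeleton (induction on $N$, conjoining conjugates of $\alpha$ along a coheir Morley sequence witnessing the dividing of the last disjunct so that its $k$-fold intersection falls into $\mathcal{I}$, keeping $\psi(x,c)$ attached to a fixed copy of $c$) matches the paper's, and your ``engine'' computation and the derivation of the ``in particular'' clause are fine. But there is a genuine gap in how you produce the transported parameters and in the bundling step. You obtain $e'^{(t)}=(e^{(t)},a_1^{(t)},\dots,a_{N-1}^{(t)})$ by applying, for each $t$ separately, an arbitrary automorphism over $Mc^{*}$ sending $a_N^{0}$ to $a_N^{t}$. Nothing makes the resulting sequence $(e'^{(t)},a_N^{t})_{t}$ a coheir Morley sequence over $M$, yet your argument uses this twice: to claim the projections $(a_i^{(t)})_{t}$ lie on one coheir Morley sequence (for bundling), and to get the \emph{joint} coheir-independence $c^{*}\ind^{u}_{M}\{a_i^{(t)}\}_{i,t}$ needed to invoke the induction hypothesis (automorphisms fixing $c^{*}$ give this only one $t$ at a time). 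The paper fixes this by reversing the order: Claim 3.4 uses \emph{left extension for $\ind^{u}$} to extend the witnessing sequence $(a_N^{t})$ to a genuine coheir Morley sequence of enriched tuples $(b^{t}a_N^{t})$ with $b^{t}a_N^{t}\equiv_{M}ba_N$, and only \emph{then} re-chooses $c$ as a realization of a global coheir of $\mathrm{tp}(c/Mba_N)$ restricted to the whole enriched sequence; this yields both $ca_N^{t}b^{t}\equiv_{M}ca_Nb$ for every $t$ and the joint independence. (Your appeal to a ``coheir chain condition for $\ind^{u}$'' producing an $Mc^{*}$-indiscernible conjugate of the witnessing sequence is also unjustified; fortunately only $c^{*}a_N^{t}\equiv_{M}ca_N$ is needed, which the above delivers.)

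The second, more fundamental, problem is the bundling itself. To conclude that $\bigvee_{t<k}\varphi_i(x,a_i^{(t)})$ is a single $\mathcal{I}$-Kim-dividing formula by your stretching argument, you need the particular coheir Morley sequence extending $(a_i^{(t)})_{t}$ to witness the eventual $\mathcal{I}$-inconsistency of $\varphi_i$. But the hypothesis only gives you that \emph{some} coheir Morley sequence starting with $a_i$ witnesses it, and in an arbitrary theory there is no Kim's lemma to transfer witnessing from one coheir Morley sequence to another --- precisely the obstruction you correctly identify for $\varphi_N$ but then implicitly assume away for the $\varphi_i$ with $i<N$. The paper avoids bundling altogether: each $\varphi(x,b^{t})$ stays a disjunction of $N-1$ formulas, each an $M$-conjugate of some $\varphi_i(x,a_i)$ and hence $\mathcal{I}$-Kim-dividing via the conjugated witness, and these blocks are eliminated one at a time by $n$ successive applications of the induction hypothesis, using the telescoped independences $cb^{1}\cdots b^{t-1}\ind^{u}_{M}b^{t}$ that the coheir-sequence structure of $(b^{t}a_N^{t})_{t}$ provides; the number of conjuncts of $\alpha$ grows multiplicatively, which is harmless. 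As written, your induction does not close.
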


\begin{proof}
We need the following claim:

\begin{claim} \label{3-lext}
Let $a^{1}, \ldots, a^{n}$ begin a coheir Morley sequence in a global type $q$ finitely satisfiable over $M$. Let $a \equiv_{M} a^{i}$ and let $b$ be any tuple. Then there are $b^{1}, \ldots, b^{n}$ so that $b^{1}a^{1}, \ldots, b^{n}a^{n}$ begin a coheir Morley sequence and $b^{i}a^{i} \equiv_{M} ba$. (The same is true for Coheir morley sequences themselves, rather than just their initial segments).
\end{claim}

\begin{proof}
Left extension for $\ind^{u}$ gives a global type $r$ finitely satisfiable over $M$ extending both $q$ and $\mathrm{tp}(ab/M)$. Now take a coheir Morley sequence in $r$ and apply an automorphism. The parenthetical is similar.
\end{proof}

Now we can prove the lemma by induction on $N$. Write $\bigvee_{i=1}^{N-1}\varphi_{i}(x, a_{i})$ as $\varphi(x, b)$, and let $a=a_{N}$. Let $p$ be a global coheir extension of $\mathrm{tp}(c/Mba)$. Let $(a^{i})_{i=1}^{n}$ be such that $a^{i} \ind^{u} a^{i-1}, \ldots, a^{1}$ and $a^{i} \equiv_{M} a$ for $1 \leq i \leq n$ and $\wedge_{i =1}^{n} \varphi_{N}(x, a^{i}) \vdash^{\mathcal{I}}\bot$. By the claim, find $b^{1}, \ldots b^{n}$ so that $a^{i}b^{i} \ind^{u} a^{i-1}b^{i-1} \ldots a^{1}b^{1}$ and $a^{i}b^{i} \equiv_{M} ab$ for $1 \leq i \leq n$. Then we can assume $c \models p|_{Maba^{1}b^{1} \ldots a^{n}b^{n}}$. From $c \ind^{u}_{M} a^{1}b^{1} \ldots a^{n}b^{n}$, together with $a^{i}b^{i} \ind^{u} a^{i-1}b^{i-1} \ldots a^{1}b^{1}$ for $1 \leq i \leq n$, it is easy to check $ca^{i+1}b^{i+1} \ldots a^{n}b^{n} \ind^{u}_{M} a^{i}b^{i}$ for $0 \leq i \leq n$, and therefore

$$cb^{i+1} \ldots b^{n} \ind_{M}^{u} b^{i}$$ for $0 \leq i < n$.

Now for $1 \leq i \leq n$ we have $ca^{i}b^{i} \equiv_{M} cab$. Let $e_{i}ca^{i}b^{i} \equiv_{M} ecab$ for $1 \leq i \leq n$. Then

$$\bigwedge\alpha(x, e_{i})\vdash^{\mathcal{I}} \psi(x, c) \vee \bigvee_{i=1}^{n}\varphi(x, b^{i}) \vee \bigwedge_{i=1}^{n} \varphi_{N}(x, a^{i})$$ But by choice of the $a^{i}$,

$$\bigwedge\alpha(x, e_{i})\vdash^{\mathcal{I}} \psi(x, c) \vee \bigvee_{i=1}^{n}\varphi(x, b^{i})$$ Now for $1 \leq i \leq n$, because $b^{i} \equiv_{M} b$, $\varphi(x, b^{i})$ will be of the form $\bigvee_{j=1}^{N-1}\varphi_{j}(x, a'_{j})$ for $\varphi_{j}(x, a'_{j})$ $\mathcal{I}$-Kim-dividing over $M$. So, as the first of $n$ steps, we can apply $cb^{2} \ldots b^{n} \ind^{u} b^{1}$ and the inductive hypothesis on $N$ to find some conjunction $\beta(x, \overline{e})$ of conjugates of $\bigwedge\alpha(x, e_{i})$ (which will therefore be a conjunction of conjugates of $\alpha(x, e)$) so that 

$$\beta(x, \overline{e})\vdash^{\mathcal{I}} \psi(x, c) \vee \bigvee_{i=2}^{n}\varphi(x, b^{i})$$ Repeating $n-1$ more times, we are done.

\end{proof}

We now begin our construction. The following terminology comes from the notion of \text{strong finite character} (used in e.g. \cite{CR15}).

\begin{definition}
Let $\ind$ be an invariant relation between sets over a model. We say that $\ind$ satisfies \emph{quasi-strong finite character} if for $p, q$ complete types over some model $M$, $\{a, b \models p(x) \cup q(y): a\ind_{M} b\}$ is type-definable.
\end{definition}

\begin{definition}
Let $\ind$ be an invariant relation between sets over a model satisfying monotonicity, right extension and quasi-strong finite character, and fix a complete type $P(x)$ over a model $M$.

(1) A set of formulas $\{\varphi_{i}(x, b_{i})\}_{i \in I}$ is \emph{$h^{\ind}$-inconsistent with respect to $P(x)$} if there is no $a \models P(x)$ with $a \ind_{M} \{b_{i}\}_{i \in I}$ and $\models \varphi_{i}(a, b_{i})$ for all $i \in I$.

(2) A formula $\varphi(x, b)$ \emph{$h^{\ind}$-Kim-divides with respect to $P(x)$} if there is a coheir Morley sequence $\{b_{i}\}_{i \in \omega}$ starting with $b$ so that $\{\varphi(x, b_{i})\}_{i \in \omega}$ is $h^{\ind}$-inconsistent with respect to $P(x)$.

(3) A formula \emph{$h^{\ind}$-Kim-forks} with respect to $P(x)$ if it implies a disjunction of formulas $h^{\ind}$-Kim-dividing with respect to $P(x)$.

(4) A formula $\varphi(x, b)$ \emph{$h^{\ind}$-quasi-divides} over $M$ with respect to $P(x)$ if there are $b_{1}, \ldots, b_{n}$ with $b_{i} \equiv_{M} b$ and $\{\varphi(x, b_{i})\}_{i=1}^{n}$ $h^{\ind}$-inconsistent with respect to $P(x)$.
\end{definition}

\begin{lemma} \label{3-hinc}
(1) The sets defined by formulas $\varphi(x, b)$ so that $\{\varphi(x, b)\}$ is $h^{\ind}$-inconsistent with respect to $P(x)$ form an $M$-invariant ideal $\mathcal{I}^{\ind}_{P(x)}$.

(2) A set $\{\varphi_{i}(x, b_{i})\}_{i \in I}$ is $h^{\ind}$-inconsistent with respect to $P(x)$ if and only if some finite subset is (so its conjunction defines a set in the ideal $\mathcal{I}^{\ind}_{P(x)}$.)
\end{lemma}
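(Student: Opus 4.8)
The plan is to verify the ideal axioms directly for (1) and to prove (2) by a compactness argument whose consistency step is fed by quasi-strong finite character; throughout I fix $P(x)$ and $\ind$ as in the standing hypotheses (invariant, monotone, with right extension and quasi-strong finite character) and take all types over $M$. For (1), the recurring move is: if $a \models P$, $a \ind_M B$, and $B \subseteq C$, then right extension gives $a' \equiv_{B} a$ with $a' \ind_M C$, so by monotonicity $a' \ind_M C'$ for every $C' \subseteq C$; since $\equiv_{B}$ refines $\equiv_M$, we keep $a' \models P$ and $a'$ satisfies every formula over $B$ that $a$ did. Applying this with $B = b$, $C = bc$, $C' = c$ shows that whether $\{\varphi(x,b)\}$ is $h^{\ind}$-inconsistent depends only on the set $\varphi(\mathbb{M},b)$: if $\varphi(\mathbb{M},b) = \psi(\mathbb{M},c)$, a tuple showing $\{\varphi(x,b)\}$ is not $h^{\ind}$-inconsistent is upgraded to an $a' \in \psi(\mathbb{M},c)$ with $a' \ind_M c$, and symmetrically, so $\mathcal{I}^{\ind}_{P(x)}$ is a well-defined family of sets. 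The same move with $B = c$, $C = bc$, $C' = b$ gives downward closure: a witness against a presentation $\psi(x,c)$ of a subset $Y = \psi(\mathbb{M},c) \subseteq \varphi(\mathbb{M},b) = X$ is upgraded to a witness against $X$, so $X \in \mathcal{I}^{\ind}_{P(x)}$ forces $Y \in \mathcal{I}^{\ind}_{P(x)}$. Closure under finite unions is pure monotonicity (a witness for $(\varphi_1 \vee \varphi_2)(x, b_1 b_2)$ satisfies one disjunct, say $\varphi_1$, and $a \ind_M b_1 b_2$ gives $a \ind_M b_1$), and $M$-invariance is immediate from invariance of $\ind$ by transporting witnesses along $\sigma \in \Aut(\mathbb{M}/M)$.

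For (2), the forward direction is monotonicity: a global witness $a$ with $a \ind_M \{b_i\}_{i \in I}$ restricts to a witness for any finite subfamily. For the converse, suppose each finite $F \subseteq I$ has $a_F \models P$ with $a_F \ind_M c_F$ and $\models \varphi_i(a_F, b_i)$ for $i \in F$, where $c_F = (b_i)_{i \in F}$ and $c = (b_i)_{i \in I}$. Right extension promotes $a_F \ind_M c_F$ to $a_F' \ind_M c$ with $a_F' \equiv_{c_F} a_F$, so $a_F'$ still realizes $P$ and all $\varphi_i(x,b_i)$ for $i \in F$. Quasi-strong finite character, applied to $P$ and $\mathrm{tp}(c/M)$, provides a partial type $\Theta(x,\bar y)$ over $M$ with $(a, c) \models \Theta$ exactly when $a \models P$ and $a \ind_M c$; hence $(a_F', c) \models \Theta$ for every $F$. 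The partial type $\Gamma(x) = \{\varphi_i(x,b_i) : i \in I\} \cup \Theta(x, c)$ is then finitely satisfiable: any finite piece mentions $\varphi_i$ only for $i$ in a finite set $F_0$, and is realized by $a_F'$ for any $F \supseteq F_0$. By compactness $\Gamma$ has a realization $a$, which is a witness that the whole family is not $h^{\ind}$-inconsistent. The ``moreover'' follows at once, since an $h^{\ind}$-inconsistent finite subfamily $\{\varphi_i : i \in F\}$ is literally the assertion that the single formula $\bigwedge_{i \in F}\varphi_i(x, b_i)$ is $h^{\ind}$-inconsistent, placing $\bigcap_{i \in F}\varphi_i(\mathbb{M},b_i)$ into $\mathcal{I}^{\ind}_{P(x)}$.

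The only genuine obstacle is the converse in (2), which is a matching problem: quasi-strong finite character type-defines independence from the \emph{entire} tuple $c$, whereas the finite hypotheses only supply independence from the finite subtuples $c_F$. Right extension is exactly the tool that closes this gap, upgrading $a_F \ind_M c_F$ to $a_F' \ind_M c$ while fixing the type over $c_F$ and hence the finitely many formulas that matter, with monotonicity giving the trivial opposite inclusion. I should also flag the mild point that quasi-strong finite character is being invoked here for the possibly infinite tuple $c$, which is the intended reading of the definition when $I$ is infinite.
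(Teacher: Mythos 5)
Your proof is correct and follows essentially the same route as the paper: the ideal axioms in (1) come from right extension plus right monotonicity exactly as in the paper's cases (a) and (b), and the finite character in (2) is the same combination of quasi-strong finite character, compactness, and a right-extension step bridging independence over a finite subtuple with independence over the whole tuple (you merely run that direction contrapositively, building a global witness rather than extracting a finite inconsistent subfamily). The only cosmetic slip is that you label the monotonicity direction of (2) as ``forward'' when it is really the ``if'' direction, but both implications are proved.
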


\begin{proof}
For (1), it suffices to show (a) that if $\models\forall x(\varphi(x, b) \rightarrow \psi(x, c)) $, and $\psi(x, c)$ is $h^{\ind}$-inconsistent with respect to $P(x)$, then $\varphi(x, b)$ is $h^{\ind}$-inconsistent with respect to $P(x)$, and (b) that if both $\varphi(x, b)$ and $\psi(x, c)$ are $h^{\ind}$-inconsistent with respect to $P(x)$ then so is $\varphi(x, b) \vee \psi(x, c)$. For (a), suppose otherwise; then there is some realization $a$ of $P(x)$ with $\models \varphi(a, b)$ and $a \ind_{M} b$. By right extension, we can assume $a \ind_{M} bc$. But then $\models \psi(a, c)$, and by right monotonicity, $a \ind_{M} c$, contradicting that $\psi(x, c)$ is $h^{\ind}$-inconsistent with respect to $P(x)$. For (b), suppose otherwise; then there is some realization $a$ of $P(x)$ with $\models \varphi(a, b) \vee \psi(a, c)$ and $a \ind_{M} bc$; without loss of generality, $\models \varphi(a, b)$, and by right monotonicity, $a \ind_{M} b$, contradicting that $\varphi(x, b)$ is $h^{\ind}$-inconsistent with respect to $P(x)$. The proof of (a) also gives us the fact that a set $\{\varphi_{i}(x, b_{i})\}_{i \in I}$ is $h^{\ind}$-inconsistent with respect to $P(x)$ if some finite subset is (so its conjunction defines a set in the ideal $\mathcal{I}^{\ind}_{P(x)}$). To complete (2), we show the ``only if" direction. If $\{\varphi_{i}(x, b_{i})\}_{i \in I}$ is $h^{\ind}$-inconsistent with respect to $P(x)$ then there is no realization $a$ of $P(x) \cup \{\varphi_{i}(x, b_{i})\}_{i \in I}$ with $a \ind_{M} \{b_{i}\}_{i \in I}$. But the set of realizations $a$ of $P(x)$ that satisfy $a \ind_{M} \{b_{i}\}_{i \in I}$ is, by quasi-strong finite character, type-definable. So by compactness, there must be some finite $I_{0} \subseteq I$ so there is no realization $a$ of $P(x) \cup \{\varphi_{i}(x, b_{i})\}_{i \in I_{0}}$ with $a \ind_{M} \{b_{i}\}_{i \in I}$. But if there is a realization $a$ of $P(x) \cup \{\varphi_{i}(x, b_{i})\}_{i \in I_{0}}$ with $a \ind_{M} \{b_{i}\}_{i \in I_{0}}$, then we can even get $a \ind_{M} \{b_{i}\}_{i \in I}$ by right-extension, so $\{\varphi_{i}(x, b_{i})\}_{i \in I_{0}}$ will be as desired.
\end{proof}

\begin{corollary}\label{3-quasidividing}
For all formulas, $h^{\ind}$-Kim-forking with respect to $P(x)$ implies $h^{\ind}$-quasi-dividing with respect to $P(x)$.
\end{corollary}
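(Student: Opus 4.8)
The plan is to instantiate the $\mathcal{I}$-broom lemma at the ideal $\mathcal{I}:=\mathcal{I}^{\ind}_{P(x)}$ produced in the preceding lemma, and to observe that the three relativized notions built from $h^{\ind}$-inconsistency are nothing but the $\mathcal{I}$-versions of Kim-dividing, Kim-forking and quasi-dividing for this particular ideal. By part (1) of the preceding lemma, $\mathcal{I}$ is an $M$-invariant ideal on the definable subsets of $\mathbb{M}$, hence a legitimate input to the broom lemma; and by the definition of $\mathcal{I}$, a formula $\psi(x,c)$ defines a set in $\mathcal{I}$ exactly when $\{\psi(x,c)\}$ is $h^{\ind}$-inconsistent with respect to $P(x)$.

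First I would match up the notions. For quasi-dividing this is immediate from part (2) of the preceding lemma: a finite family $\{\varphi(x,b_i)\}_{i=1}^{n}$ is $h^{\ind}$-inconsistent with respect to $P(x)$ if and only if $\bigcap_{i=1}^{n}\varphi(\mathbb{M},b_i)\in\mathcal{I}$, so $h^{\ind}$-quasi-dividing with respect to $P(x)$ coincides with $\mathcal{I}$-quasi-dividing over $M$. For Kim-dividing, suppose $\varphi(x,b)$ $h^{\ind}$-Kim-divides with respect to $P(x)$, witnessed by a coheir Morley sequence $\{b_i\}_{i\in\omega}$ with $\{\varphi(x,b_i)\}_{i\in\omega}$ $h^{\ind}$-inconsistent; by part (2) some finite subfamily, of size $k$ say, is already $h^{\ind}$-inconsistent, so the intersection of those $k$ sets lies in $\mathcal{I}$. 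Using that the sequence is indiscernible over $M$ and that $\mathcal{I}$ is $M$-invariant, I would then conclude that the intersection of any $k$-element subfamily of $\{\varphi(\mathbb{M},b_i)\}_{i\in\omega}$ lies in $\mathcal{I}$, which is exactly $\mathcal{I}$-Kim-dividing; the converse direction is immediate from part (2). Since the two notions of Kim-dividing agree, so do the two notions of Kim-forking, each being the property of implying a finite disjunction of Kim-dividing formulas.

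With the dictionary established, the corollary follows at once: if $\varphi(x,b)$ $h^{\ind}$-Kim-forks with respect to $P(x)$, then it $\mathcal{I}$-Kim-forks over $M$, whence by the ``in particular'' clause of the $\mathcal{I}$-broom lemma it $\mathcal{I}$-quasi-divides over $M$, and therefore $h^{\ind}$-quasi-divides with respect to $P(x)$. The one place that requires genuine attention, rather than bookkeeping, is the step from ``some finite subfamily has intersection in $\mathcal{I}$'' to ``every $k$-element subfamily does'' in the Kim-dividing translation; this is exactly where indiscernibility of the coheir Morley sequence together with the $M$-invariance of $\mathcal{I}$ enters, and I expect it to be the only nontrivial point.
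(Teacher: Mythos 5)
Your proposal is correct and takes essentially the same approach as the paper, whose entire proof reads ``Apply Lemma 3.3 to $\mathcal{I}^{\ind}_{P(x)}$.'' The dictionary you spell out between the $h^{\ind}$-notions and the $\mathcal{I}$-notions --- including the passage from ``some finite subfamily'' to ``any $k$-element subfamily'' via indiscernibility and $M$-invariance, which the paper builds directly into the ``some (any)'' phrasing of its definition of $\mathcal{I}$-Kim-dividing --- is exactly what that one-line proof leaves implicit.
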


\begin{proof}
By Lemma \ref{3-hinc}, $h^{\ind}$-Kim-dividing with respect to $P(x)$ is just $\mathcal{I}^{\ind}_{P(x)}$-Kim-dividing. Apply Lemma \ref{3-broom} to $\mathcal{I}^{\ind}_{P(x)}$.
\end{proof}

\begin{lemma}\label{3-lmon}
If a formula $\varphi(x, b)$ is $h^{\ind}$-inconsistent with respect to $P(x)$, then it is $h^{\ind}$-inconsistent with respect to any complete type $Q(x, y)$ extending $P(x)$. So the same is true for $h^{\ind}$-Kim-dividing and $h^{\ind}$-Kim-forking.
\end{lemma}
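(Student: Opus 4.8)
The plan is to reduce the whole statement to the defining clause of $h^{\ind}$-inconsistency together with left monotonicity. First I would fix the reading of the statement: a realization of $Q(x,y)$ is a pair $c = (a,a')$, and the formula $\varphi(x,b)$ is regarded as a formula in the variables $(x,y)$ that happens not to mention $y$. Thus "$\varphi(x,b)$ is $h^{\ind}$-inconsistent with respect to $Q(x,y)$" unwinds to: there is no realization $(a,a') \models Q(x,y)$ with $(a,a') \ind_{M} b$ and $\models \varphi(a,b)$.

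The core step is a single-formula claim proved by contraposition, which in fact works verbatim for an arbitrary family of formulas. Suppose $\varphi(x,b)$ is $h^{\ind}$-inconsistent with respect to $P(x)$ but not with respect to $Q(x,y)$. Then some $(a,a') \models Q(x,y)$ satisfies $(a,a') \ind_{M} b$ and $\models \varphi(a,b)$. Since $Q$ extends $P$, the $x$-reduct $a$ realizes $P(x)$; and since $M \subseteq Ma \subseteq Maa'$, \emph{left} monotonicity gives $a \ind_{M} b$. But then $a$ shows that $\varphi(x,b)$ is not $h^{\ind}$-inconsistent with respect to $P(x)$, a contradiction. Running the identical argument with a set $\{\varphi_{i}(x,b_{i})\}_{i \in I}$ in place of $\varphi(x,b)$ shows that any set which is $h^{\ind}$-inconsistent with respect to $P(x)$ is $h^{\ind}$-inconsistent with respect to $Q(x,y)$; equivalently, $\mathcal{I}^{\ind}_{P(x)} \subseteq \mathcal{I}^{\ind}_{Q(x,y)}$.

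The two remaining clauses then follow formally. For Kim-dividing: if $\varphi(x,b)$ $h^{\ind}$-Kim-divides with respect to $P(x)$, witnessed by a coheir Morley sequence $\{b_{i}\}_{i \in \omega}$ starting with $b$, then the same sequence works for $Q(x,y)$, since $\{\varphi(x,b_{i})\}_{i \in \omega}$, being $h^{\ind}$-inconsistent with respect to $P(x)$, is $h^{\ind}$-inconsistent with respect to $Q(x,y)$ by the set version just proved; the coheir Morley sequence is a sequence of parameters and is unaffected by the change of type. For Kim-forking: if $\varphi(x,b)$ implies a disjunction of formulas $h^{\ind}$-Kim-dividing with respect to $P(x)$, each disjunct $h^{\ind}$-Kim-divides with respect to $Q(x,y)$ by the previous case, and the implication itself does not mention the type, so $\varphi(x,b)$ $h^{\ind}$-Kim-forks with respect to $Q(x,y)$. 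If one reads "implies" as $\vdash^{\mathcal{I}}$, then the containment $\mathcal{I}^{\ind}_{P(x)} \subseteq \mathcal{I}^{\ind}_{Q(x,y)}$ preserves the entailment, so the conclusion is unchanged.

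There is no serious obstacle here; the only point to watch is the direction of monotonicity. We are shrinking the realization side (from $(a,a')$ to $a$), which sits on the left of $\ind$, so it is left monotonicity — not right monotonicity — that is needed, together with the trivial but essential observation that the $x$-reduct of a realization of $Q$ realizes $P$.
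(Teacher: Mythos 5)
Your proof is correct and follows essentially the same route as the paper's: contrapositive, observe that the $x$-reduct of a realization of $Q$ realizes $P$, and apply left monotonicity to pass from $aa' \ind_{M} b$ to $a \ind_{M} b$. The extra detail you give for the set version and for the Kim-dividing and Kim-forking clauses is exactly what the paper leaves implicit in its final sentence.
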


\begin{proof}
Suppose otherwise. Then there is a realization $ac$ of $Q(x, y) \cup \{\varphi(x, b)\}$ with $ac \ind_{M} b $. So by left monotonicity, $a \ind_{M} b $, but $a$ realizes $P(x) \cup \{\varphi(x, b)\}$, a contradiction.
\end{proof}

We are now in a position to study derived independence relations:

\begin{definition}
Let $\ind$ be an invariant relation between sets over a model satisfying monotonicity, right extension and quasi-strong finite character. Then we define $a\ind'_{M}b$ to mean that $\mathrm{tp}(a/Mb)$ does not contain any formulas $h^{\ind}$-Kim-forking with respect to $\mathrm{tp}(a/M)$.
\end{definition}

\begin{lemma}\label{3-derived}
Suppose $\ind$ is an invariant relation between sets over a model satisfying monotonicity, right extension, quasi-strong finite character, and full existence. Then so is $\ind'$.
\end{lemma}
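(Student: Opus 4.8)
The plan is to verify each of the five properties in turn—invariance, monotonicity, right extension, quasi-strong finite character, and existence—most of which reduce to two elementary closure features of $h^{\ind}$-Kim-forking with respect to a fixed type $P(x)$, which I would record at the outset: it is closed under finite disjunction (if $\psi_{1}, \psi_{2}$ each imply a disjunction of $h^{\ind}$-Kim-dividing formulas, so does $\psi_{1} \vee \psi_{2}$), and it is closed downward under implication (if $\chi \vdash \psi$ and $\psi$ $h^{\ind}$-Kim-forks, then $\chi$ $h^{\ind}$-Kim-forks). Invariance of $\ind'$ is then immediate: $h^{\ind}$-inconsistency, $h^{\ind}$-Kim-dividing and $h^{\ind}$-Kim-forking are all preserved under $\mathrm{Aut}(\mathbb{M})$, since $\ind$ is invariant, coheir Morley sequences are sent to coheir Morley sequences, and types to types, so $a \ind'_{M} b$ transfers to $\sigma(a) \ind'_{\sigma(M)} \sigma(b)$.

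For quasi-strong finite character, fix complete types $p(x), q(y)$ over $M$. By invariance over $M$, whether $\psi(x, b)$ (with $\psi$ an $L(M)$-formula) $h^{\ind}$-Kim-forks with respect to $p$ depends, for $b \models q$, only on $\psi$; let $\mathcal{F}$ be the set of such forking $L(M)$-formulas $\psi(x,y)$. For $a \models p$, $b \models q$, the condition $a \ind'_{M} b$ says precisely that $\models \neg \psi(a,b)$ for every $\psi \in \mathcal{F}$, so $\{(a,b) : a \models p,\ b \models q,\ a \ind'_{M} b\}$ is cut out by the partial type $p(x) \cup q(y) \cup \{\neg \psi(x,y) : \psi \in \mathcal{F}\}$, hence type-definable.

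Monotonicity and right extension are also largely formal. Right monotonicity holds because $\mathrm{tp}(a/Mb') \subseteq \mathrm{tp}(a/Mb)$ when $b' \subseteq b$. For left monotonicity I would write $a = a'a''$; since $\mathrm{tp}(a/M)$ extends $\mathrm{tp}(a'/M)$, the lemma on preservation of $h^{\ind}$-Kim-forking under passing to a type extension shows that any $\psi(x',b) \in \mathrm{tp}(a'/Mb)$ Kim-forking with respect to $\mathrm{tp}(a'/M)$ also Kim-forks with respect to $\mathrm{tp}(a/M)$; as $\psi(x',b) \in \mathrm{tp}(a/Mb)$, this contradicts $a \ind'_{M} b$. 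For right extension, given $a \ind'_{M} b$ and $b \subseteq c$, I would show the partial type $\mathrm{tp}(a/Mb) \cup \{\neg\psi(x,\bar c) : \psi \text{ over } Mc \text{ that } h^{\ind}\text{-Kim-forks w.r.t. } \mathrm{tp}(a/M)\}$ is consistent, so that any realization $a'$ satisfies $a' \equiv_{Mb} a$ and $a' \ind'_{M} c$. Were it inconsistent, closure under disjunction and compactness would give some $\chi(x,b) \in \mathrm{tp}(a/Mb)$ implying a formula $h^{\ind}$-Kim-forking with respect to $\mathrm{tp}(a/M)$; by downward closure $\chi(x,b)$ itself Kim-forks, contradicting $a \ind'_{M} b$.

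The main point is existence, where I expect the real work to lie. Given $M \subseteq a,b$, set $P(x) = \mathrm{tp}(a/M)$; I want to realize $P(x)$ while avoiding all $h^{\ind}$-Kim-forking formulas over $Mb$, i.e. to show $P(x) \cup \{\neg\psi(x,\bar b) : \psi \text{ over } Mb \text{ that } h^{\ind}\text{-Kim-forks w.r.t. } P\}$ is consistent. If not, closure under disjunction and compactness produce a single $\psi(x,\bar b)$ that $h^{\ind}$-Kim-forks with respect to $P$ and with $P(x) \vdash \psi(x,\bar b)$. Here I would invoke the corollary that $h^{\ind}$-Kim-forking with respect to $P$ implies $h^{\ind}$-quasi-dividing with respect to $P$: there are conjugates $\bar b_{1}, \dots, \bar b_{n} \equiv_{M} \bar b$ with $\{\psi(x,\bar b_{i})\}_{i=1}^{n}$ $h^{\ind}$-inconsistent with respect to $P$. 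But $P(x) \vdash \psi(x,\bar b_{i})$ for each $i$ (apply an $M$-automorphism sending $\bar b$ to $\bar b_{i}$), and existence for $\ind$ yields a realization $a^{*}$ of $P$ with $a^{*} \ind_{M} \{\bar b_{i}\}_{i}$; this $a^{*}$ satisfies every $\psi(x,\bar b_{i})$, contradicting $h^{\ind}$-inconsistency. This last step is the delicate one, being the only place where the broom lemma (through the quasi-dividing corollary) and the existence axiom for $\ind$ are genuinely used; everything else is bookkeeping around the two closure properties fixed at the start.
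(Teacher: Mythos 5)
Your proposal is correct and follows essentially the same route as the paper's proof: the formal properties are reduced to closure of $h^{\ind}$-Kim-forking under disjunction and implication (with left monotonicity via the lemma on extending the base type), and existence is proved by compactness, the corollary that $h^{\ind}$-Kim-forking implies $h^{\ind}$-quasi-dividing, and existence for $\ind$ applied to the quasi-dividing witnesses. The only cosmetic difference is that the paper phrases existence as finding a realization of the right-hand type independent from a fixed left-hand tuple, which is equivalent to your formulation by an automorphism.
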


\begin{proof}
Invariance is obviously inherited from $\ind$. Quasi-strong finite character is by construction and right extension is also standard from the construction: if $a \ind'_{M}b$ but, for some $c \in \mathbb{M}$ there is no $a' \equiv_{Mb} a$ with $a \ind'_{M}bc$, then $\mathrm{tp}(a/Mb)$ must imply a disjunction of formulas with parameters in $Mbc$ $h^{\ind}$-Kim-forking with respect to $P(x)$; some formula in $\mathrm{tp}(a/Mb)$ must then imply this disjunction, which will then $h^{\ind}$-Kim-fork with respect to $P(x)$, contradicting $a \ind'_{M}b$. Right monotonicity is by definition. Left monotonicity is Lemma \ref{3-lmon}. It remains to show full existence; the proof is a straightforward generalization of the proof of Lemma 3.7 of \cite{CK09}. By right extension, it suffices to show that $b \ind'_{M} M$ for any tuple $b$ (the ``existence" property that is implied by full existence). Suppose otherwise; then $\mathrm{tp}(b/M)$ contains a formula $\varphi(x, m)$ for $m \in M$ that $h^{\ind}$-Kim-forks over $M$. By Corollary \ref{3-quasidividing}, $\varphi(x, m)$ $h^{\ind}$-quasi-divides over $M$. Since $m \in M$, this just means that $\varphi(x, m) \in \mathcal{I}_{\mathrm{tp}(b/M)}^{\ind}$. But since $\varphi(x, m) \in \mathrm{tp}(b/M)$, this contradicts full existence for $\ind$.
\end{proof}

The next observation is required to produce a relation with the coheir chain condition:

\begin{lemma}\label{3-chain}
Let $\ind$ be as in Lemma \ref{3-derived} and suppose $a \ind'_{M} b$. Then for $I=\{b_{i}\}_{i \in \omega}$ a coheir Morley sequence starting with $b$, there is $I' \equiv_{M} I$ with $a \ind_{M} I'$ and each term of $I'$ satisfying $\mathrm{tp}(b/Ma)$. In particular, $\ind'$ implies $\ind$, so $h^{\ind}$-Kim-forking implies $h^{\ind'}$-Kim-forking.
\end{lemma}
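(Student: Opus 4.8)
The plan is to reduce the statement to a compactness argument that produces a single realization of $P(x) := \mathrm{tp}(a/M)$ which is $\ind$-independent from the whole given sequence and carries the correct type over each of its terms, and then to transport this realization back onto $a$ by an automorphism. First I would record the reformulation: it suffices to find some $a^{*} \models P(x)$ with $a^{*} \ind_{M} I$ and $a^{*} b_{i} \equiv_{M} ab$ for every $i \in \omega$. Indeed, given such an $a^{*}$, since $a^{*} \equiv_{M} a$ there is $\sigma \in \mathrm{Aut}(\mathbb{M}/M)$ with $\sigma(a^{*}) = a$; setting $I' = \sigma(I)$, invariance gives $a \ind_{M} I'$, clearly $I' \equiv_{M} I$, and applying $\sigma$ to $a^{*} b_{i} \equiv_{M} ab$ shows each term $\sigma(b_{i})$ of $I'$ realizes $\mathrm{tp}(b/Ma)$.

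To build $a^{*}$, the key observation is that the hypothesis $a \ind'_{M} b$ forbids $h^{\ind}$-Kim-dividing not merely for some coheir Morley sequence, but — by the shape of the definition of Kim-dividing — for every coheir Morley sequence starting with $b$, in particular for the given $I$. Concretely, for any formula $\varphi(x, y)$ with $\varphi(x, b) \in \mathrm{tp}(a/Mb)$, the formula $\varphi(x, b)$ does not $h^{\ind}$-Kim-fork (since $a \ind'_{M} b$), hence does not $h^{\ind}$-Kim-divide, so $\{\varphi(x, b_{i})\}_{i \in \omega}$ is not $h^{\ind}$-inconsistent with respect to $P(x)$: there is some $a_{\varphi} \models P(x)$ with $a_{\varphi} \ind_{M} I$ and $\models \varphi(a_{\varphi}, b_{i})$ for all $i$. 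Since $\mathrm{tp}(a/Mb)$ is closed under conjunction, the same holds for any finite conjunction of such formulas.

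Now I would realize by compactness the partial type $\Sigma(x)$ consisting of $P(x)$, all formulas $\varphi(x, b_{i})$ for $\varphi(x, y) \in \mathrm{tp}(ab/M)$ and $i \in \omega$, together with the partial type $\Pi(x; I)$ that, by quasi-strong finite character applied to $P(x)$ and $\mathrm{tp}(I/M)$, cuts out among realizations of $P(x)$ exactly those independent from $I$. Given a finite subset of $\Sigma(x)$, let $\varphi$ be the conjunction of the finitely many formulas from $\mathrm{tp}(ab/M)$ appearing in it; the witness $a_{\varphi}$ from the previous paragraph satisfies $P(x)$, each of the finitely many instances $\varphi(x, b_{i})$, and $a_{\varphi} \ind_{M} I$, hence all of $\Pi(x; I)$ and in particular its finitely many chosen formulas. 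So every finite subset is consistent, and a realization $a^{*}$ of $\Sigma(x)$ is precisely the object required above.

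Finally, for the ``in particular'' clauses: applying the main statement and passing to the first term $b_{0}'$ of $I'$, right monotonicity gives $a \ind_{M} b_{0}'$, and since $a b_{0}' \equiv_{M} ab$ invariance yields $a \ind_{M} b$, so $\ind'$ implies $\ind$; as $\ind'$-independence is then stronger than $\ind$-independence, any set that is $h^{\ind}$-inconsistent with respect to $P(x)$ is a fortiori $h^{\ind'}$-inconsistent, whence $h^{\ind}$-Kim-dividing and $h^{\ind}$-Kim-forking imply their $h^{\ind'}$ counterparts. The one step demanding care is the compactness argument: it goes through precisely because quasi-strong finite character renders the independence condition ``$x \ind_{M} I$'' type-definable, so that it survives the limit, and because closure of $\mathrm{tp}(a/Mb)$ under conjunction lets the separate non-Kim-dividing witnesses be amalgamated against a common finite constraint.
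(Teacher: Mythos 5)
Your proof is correct and is essentially the paper's argument: both reduce the statement to showing that $\bigcup_{i}q(x,b_{i})$ for $q=\mathrm{tp}(a,b/M)$ is not $h^{\ind}$-inconsistent with respect to $\mathrm{tp}(a/M)$, and both rest on the finite character of $h^{\ind}$-inconsistency (Lemma 3.7(2), i.e.\ quasi-strong finite character plus right extension) together with the observation that a finite $h^{\ind}$-inconsistent subset would yield a formula of $\mathrm{tp}(a/Mb)$ that $h^{\ind}$-Kim-divides along $I$, contradicting $a\ind'_{M}b$. The only difference is presentational: you run the compactness argument directly to build the witness $a^{*}$, whereas the paper phrases the same content as a proof by contradiction.
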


\begin{proof}
Suppose otherwise: then for $q= \mathrm{tp}(a, b/M)$, $\cup_{i\in \omega}q(x,b_{i})$ is $h^{\ind}$-inconsistent with respect to $\mathrm{tp}(a/M)$, so by part (2) of Lemma \ref{3-hinc}, some finite subset must be $h^{\ind}$-inconsistent with respect to $\mathrm{tp}(a/M)$. This gives us a formula in $q(x, b)$ that $h^{\ind}$-Kim divides with respect to $\mathrm{tp}(a/M)$, a contradiction.
\end{proof}

Note that $\ind^{h}$ satisfies the assumptions of Lemma \ref{3-derived}. Now define inductively, $\ind^{(0)}=\ind^{h}$, $\ind^{(n+1)}=(\ind^{(n)})^{'}$. Let $\ind^{\mathrm{CK}}= \bigcap_{i=0}^{\infty}\ind^{(n)}$. Then because $h^{\ind^{(n)}}$-Kim-forking implies $h^{\ind^{(n+1)}}$-Kim-forking, and $a \ind^{\mathrm{CK}}_{M} b$ means that $\mathrm{tp}(a/Mb)$ does not contain a $h^{\ind^{(n)}}$-Kim-forking formula for any $n$, right extension and quasi-strong finite character are standard. Monotonicity and invariance follows from monotonicity and invariance of the $\ind^{(n)}$. By right extension for $\ind^{\mathrm{CK}}$, full existence for $\ind^{\mathrm{CK}}$ would follow from the existence property $b \ind^{\mathrm{CK}}_{M} M$ for any $b$, but this just follows from full existence for each of the $\ind^{(n)}$. Finally, the coheir chain condition follows from Lemma \ref{3-chain} together with quasi-strong finite character for the $\ind^{(n)}$ and compactness.

It remains to show that $\ind^{\mathrm{CK}}$ is the weakest relation implying $\ind^{h}$ and satisfying these properties. Let $\ind$ be some other such relation and assume by induction that $\ind $ implies $\ind^{(n)}$ Assume $a \ind_{M} b$; we show $a \ind^{(n+1)}_{M} b$. Suppose otherwise; by right extension for $\ind$, we can assume $\mathrm{tp}(a/Mb)$ contains a formula $\varphi(x, b)$ that $h^{\ind^{(n)}}$-Kim-divides with respect to $\mathrm{tp}(a/M)$. Let $I=\{b_{i}\}_{i \in \omega}$ be a coheir Morley sequence starting with $b$ witnessing this. Then by the coheir chain condition for $\ind$, there is some $a'$ with $a' \ind_{M} I$, so in particular $a' \ind^{(n)}_{M} I$ by induction, and with $a'b_{i} \equiv_{M} ab$ for $ i \in \omega $, so in particular with $a'$ satisfying $\{\varphi(x, b_{i})\}_{i \in \omega}$, a contradiction. 

This completes the proof of Proposition \ref{3-ck}.

\begin{remark}
If $\mathbb{M}' \succ \mathbb{M}$ is a very large (sufficiently saturated) model, then $\ind^{\mathrm{CK}}$ as computed in $\mathbb{M}'$ restricts to $\ind^{\mathrm{CK}}$ as computed in $\mathbb{M}$. We can see that $\ind^{\mathrm{CK}}$ has this property as it is true for $\ind^{h}$ and is preserved by going from $\ind^{(n)}$ to $\ind^{(n+1)}$. However, it is also immediate that invariance, monotonicity, full existence and right extension, and the coheir chain condition are preserved on restriction.
\end{remark}

\section{Canonical coheirs in $\mathrm{NSOP}_{2}$ theories}

The goal of this section is to prove a version of ``Kim's lemma for Kim-dividing" for canonical Morley sequences in $\mathrm{NSOP}_{2}$ theories.

\begin{lemma}\label{3-ccoheir}
Let $p(x)$ be a type over $M$. Then it has a global extension $q(x)$ so that for all tuples $b \in \mathbb{M}$, if $c \models q|Mb$, then $b \ind^{\mathrm{CK}}_{M}c$. So in particular, $q$ is a global coheir of $p(x)$.
\end{lemma}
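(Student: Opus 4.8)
The plan is to build $q$ by compactness as an arbitrary completion of a partial type $\Sigma(x)$ over $\mathbb{M}$ that, for every parameter $b$, forces any realization to be $\ind^{\mathrm{CK}}$-independent (on the left, in $b$) from that realization. The mechanism that makes this possible is the quasi-strong finite character of $\ind^{\mathrm{CK}}$ established in Proposition 3.1: for each complete type $r(y)$ over $M$, the set $\{(b,c): b \models r,\ c \models p,\ b \ind^{\mathrm{CK}}_{M} c\}$ is type-definable, say by a partial type $\Phi_{r,p}(y,x)$ over $M$. I then set
$$\Sigma(x) = p(x) \cup \bigcup_{b \in \mathbb{M}} \Phi_{\mathrm{tp}(b/M),\,p}(b,x),$$
so that any completion $q \supseteq \Sigma$ automatically has the desired property: if $c \models q|_{Mb}$ then in particular $c \models \Phi_{\mathrm{tp}(b/M),p}(b,x)$, whence $b \ind^{\mathrm{CK}}_{M} c$.

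The substance is the consistency of $\Sigma$, which I would verify by compactness. A finite fragment involves finitely many parameters $b_{1}, \dots, b_{n}$; writing $b = b_{1} \cdots b_{n}$, it suffices to produce a single $c \models p$ with $b_{j} \ind^{\mathrm{CK}}_{M} c$ for each $j$, and by left monotonicity this follows from $b \ind^{\mathrm{CK}}_{M} c$. To obtain such a $c$ I would use existence for $\ind^{\mathrm{CK}}$: applying it with $b$ on the left and a fixed realization $c_{0} \models p$ on the right gives $b' \equiv_{M} b$ with $b' \ind^{\mathrm{CK}}_{M} c_{0}$; choosing $\sigma \in \mathrm{Aut}(\mathbb{M}/M)$ with $\sigma(b') = b$ and invoking invariance yields $b \ind^{\mathrm{CK}}_{M} \sigma(c_{0})$, and $\sigma(c_{0}) \models p$ since $p$ is over $M$. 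So $c = \sigma(c_{0})$ works, and $\Sigma$ is finitely satisfiable.

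Finally, I would complete $\Sigma$ to a global type $q$ and read off the two conclusions. The independence statement is immediate from the construction, as above. For the ``in particular'' clause, since $\ind^{\mathrm{CK}}$ is stronger than $\ind^{h}$, $b \ind^{\mathrm{CK}}_{M} c$ implies $c \ind^{u}_{M} b$, i.e.\ $\mathrm{tp}(c/Mb)$ is finitely satisfiable in $M$; as every formula of $q$ has its parameters in some such tuple $b$, this shows $q$ is a global coheir of $p$.

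I expect the main obstacle to be getting the orientation right and keeping the construction uniform in $b$: existence only delivers independence after replacing $b$ by an $M$-conjugate, so the automorphism step (together with invariance) is essential, and it is precisely quasi-strong finite character---rather than any stronger definability---that lets the pointwise existence statements be amalgamated into a single global type by compactness.
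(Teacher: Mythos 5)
Your proof is correct, but it globalizes the existence statement differently from the paper. The paper's argument is a two-line application of the ``large set on the left'' trick: working in a larger monster $\mathbb{M}' \succ \mathbb{M}$, existence and invariance for $\ind^{\mathrm{CK}}$ produce a single realization $c'$ of $p(x)$ with $\mathbb{M} \ind^{\mathrm{CK}}_{M} c'$, and $q(x) = \mathrm{tp}(c'/\mathbb{M})$ then works by left monotonicity alone; the price is that one must know $\ind^{\mathrm{CK}}$ computed in $\mathbb{M}'$ restricts to $\ind^{\mathrm{CK}}$ computed in $\mathbb{M}$, which is the content of Remark 3.12. You instead stay inside $\mathbb{M}$ and amalgamate the pointwise existence statements (each obtained, as in the paper, from existence plus invariance plus an automorphism, with left monotonicity handling finite fragments) into one global type by compactness; the price is that you must invoke quasi-strong finite character for $\ind^{\mathrm{CK}}$ to make the independence conditions type-definable (and, implicitly, invariance to see that the type-definition can be taken over $M$, uniformly in $\mathrm{tp}(b/M)$). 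Both routes rest on the same three pillars---existence, invariance, left monotonicity---and both auxiliary facts you each need are established in Section 3, so the choice is a matter of taste: the paper's version is shorter and shows quasi-strong finite character is not actually needed here, while yours avoids any change of monster model. One small point of hygiene: the lemma does not assume $p(x)$ is complete, so to quote quasi-strong finite character literally you should first replace $p$ by a completion over $M$ (harmless, since the conclusion for a completion implies it for $p$).
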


\begin{proof}
In a very large $\mathbb{M}' \succ \mathbb{M}$, full existence and invariance for $\ind^{\mathrm{CK}}$, and an automorphism, gives us a realization $c'$ of $p(x)$ with $\mathbb{M} \ind^{\mathrm{CK}}_{M} c'$. Now take $q(x)$ to be $\mathrm{tp}(c'/
\mathbb{M})$, and the lemma follows by monotonicity on the left.
\end{proof}

\begin{definition}
We call $q(x)$ as in Lemma \ref{3-ccoheir} a \emph{canonical coheir}, and a coheir Morley sequence in it a \emph{canonical Morley sequence}.
\end{definition}

\begin{theorem}\label{3-cms}
Let $T$ be $\mathrm{NSOP}_{2}$. Suppose a canonical Morley sequence witnesses Kim-dividing of a formula $\varphi(x, b)$ over $M$. Then there is a finite bound (depending only on $\varphi(x, y)$ and the degree of Kim-dividing witnessed by the canonical Morley sequence) on the length of a coheir sequence $\{b_{i}\}_{i=1}^{n}$ over $M$ of realizations of $\mathrm{tp}(b/M)$ so that $\{\varphi(x, b_{i})\}_{i =1}^{n}$ is consistent. In particular, every coheir Morley sequence starting with $b$ witnesses Kim-dividing of $\varphi(x, b)$ over $M$.
\end{theorem}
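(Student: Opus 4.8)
The plan is to prove the finite bound by contraposition: I will show that its failure produces $k$-$\mathrm{DCTP}_{1}$, hence $\mathrm{SOP}_{2}$ by Lemma 2.9, contradicting $\mathrm{NSOP}_{2}$; the ``in particular'' clause then follows immediately. So fix the canonical coheir $q(x)$ extending $p(x)=\mathrm{tp}(b/M)$ and a canonical Morley sequence $\{c_{i}\}_{i\in\omega}$ in $q$ starting with $b$ for which $\{\varphi(x,c_{i})\}_{i\in\omega}$ is $k$-inconsistent, and suppose toward a contradiction that for every $n$ there is a coheir sequence $b_{1},\dots,b_{n}$ of realizations of $p$ with $\{\varphi(x,b_{i})\}_{i=1}^{n}$ consistent.

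From this data I would build a tree $(b_{\eta})_{\eta\in\omega^{<\omega}}$ of realizations of $p$ all of whose branches are $\varphi$-consistent and all of whose descending combs of length $k$ are $\varphi$-inconsistent. The branching is obtained by spreading: at each node the immediate successors are chosen to begin a coheir Morley sequence in a conjugate of the canonical coheir $q$ over the material already placed, using existence and the coheir chain condition for $\ind^{\mathrm{CK}}$ from Proposition 3.1. The role of $q$ is that, after extraction, every descending comb is a canonical Morley sequence over $M$ and so, like $\{c_{i}\}$, has its family of $\varphi$-instances $k$-inconsistent, while the hypothesized arbitrarily long consistent coheir sequences are used along the vertical direction to arrange that branches are $\varphi$-consistent. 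By compactness these assemble into the full tree, and the tree-modeling Fact 2.6 produces a strongly indiscernible $(d_{\eta})_{\eta\in\omega^{<\omega}}$; since all branches of a given length lie in one $\sim_{0}$-class and all length-$k$ descending combs in another, and the respective properties held for \emph{every} representative in $(b_{\eta})$, the tree $(d_{\eta})$ has every branch $\varphi$-consistent and every length-$k$ descending comb $\varphi$-inconsistent. This is exactly $k$-$\mathrm{DCTP}_{1}$, so by Lemma 2.9 the theory $T$ has $\mathrm{SOP}_{2}$, the desired contradiction. Moreover the resulting bound $N$ depends only on $\varphi$ and $k$, since the construction needs only arbitrarily long consistent coheir sequences of realizations of some type together with a $k$-inconsistent canonical Morley pattern; were such sequences unbounded across all choices of $M,p,b$, the same compactness argument would manufacture a $k$-$\mathrm{DCTP}_{1}$ witness in $T$.

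The hard part is the simultaneous bookkeeping in the construction: one and the same branching step must make descending combs \emph{inconsistent} while leaving branches \emph{consistent}, and these live in different $\sim_{0}$-classes precisely so that no incompatibility is forced. What this construction cannot do is propagate a common realizer, for the chain condition for $\ind^{\mathrm{CK}}$ shows that if a realizer $a\models\varphi(x,b)$ were $\ind^{\mathrm{CK}}$-independent from the branch we could spread it along a canonical Morley sequence through $b$ with every term satisfying $\mathrm{tp}(b/Ma)$, contradicting the $k$-inconsistency of $\{c_{i}\}$. Hence the branching must break independence of the realizer across siblings, so that descending combs become genuine canonical Morley sequences over $M$ and collapse, while the prescribed consistent coheir sequences keep each individual branch satisfiable. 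Verifying that the spread-out tree really does have this dual behavior, and that both features survive the passage to strong indiscernibility, is the main technical content; the key check is that descending combs in the spread-out tree are honestly canonical Morley sequences over $M$ so that the $k$-inconsistency of the original transfers.

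Finally, for the ``in particular'' clause, let $N=N(\varphi,k)$ be the bound and let $\{b_{i}\}_{i\in\omega}$ be any coheir Morley sequence starting with $b$. Its initial segment $b_{0},\dots,b_{N}$ is a coheir sequence of realizations of $p$ of length $N+1>N$, so $\{\varphi(x,b_{i})\}_{i=0}^{N}$ is inconsistent; hence $\{\varphi(x,b_{i})\}_{i\in\omega}$ is inconsistent and this Morley sequence witnesses Kim-dividing of $\varphi(x,b)$ over $M$.
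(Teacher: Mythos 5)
Your overall strategy is the paper's: assume arbitrarily long consistent coheir sequences of realizations of $\mathrm{tp}(b/M)$, build a tree whose paths are $\varphi$-consistent and whose length-$k$ descending combs are $\varphi$-inconsistent, conclude $k$-$\mathrm{DCTP}_{1}$ by compactness and hence $\mathrm{SOP}_{2}$ by Lemma 2.9; the ``in particular'' clause and the uniformity of the bound are handled correctly. However, there is a genuine gap: the construction of the tree, which you yourself flag as ``the main technical content,'' is precisely the entire content of the proof, and you have not carried it out. The paper isolates it as Lemma 4.5: given a canonical coheir $q(x)$ of $\mathrm{tp}(b/M)$ and a coheir sequence $b_{0},\ldots,b_{n}$, there is a \emph{coheir tree} (Definition 4.4) of height $n$ whose paths, read toward the root, realize $\mathrm{tp}(b_{0}\ldots b_{n}/M)$ and whose descending combs, read lexicographically, begin canonical Morley sequences in $q$. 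Proving this requires an ingredient you do not mention: the coheir chain condition of Proposition 3.1 is stated for coheir Morley \emph{sequences}, but the induction needs it for entire coheir \emph{trees} (the Claim inside Lemma 4.5), which is itself a nontrivial level-by-level argument using left extension for $\ind^{u}$ together with the sequence-level chain condition. Citing ``existence and the coheir chain condition'' does not bridge this.

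Moreover, your description of the construction --- ``at each node the immediate successors are chosen to begin a coheir Morley sequence \ldots over the material already placed'' --- runs top-down, which cannot work as stated: for the path condition, each node must realize a coheir extension over the entire subtree \emph{above} it (the root plays the role of $b_{n}$, the last element of the coheir sequence), so it must be finitely satisfiable in $M$ over material that a top-down construction has not yet placed. The paper's construction is necessarily bottom-up: build the height-$n$ tree $I_{n}$, take a long coheir sequence of copies $\{I_{n}^{i}\}$ with every node of $I_{n}^{\gamma}$ realizing $q|_{M\{I_{n}^{i}\}_{i<\gamma}}$ (this is where the tree-level chain condition is used, and it is what makes descending combs spanning several copies continue a canonical Morley sequence in $q$), and only then adjoin a new root realizing a coheir extension of $\mathrm{tp}(b_{n+1}/Mb_{0}\ldots b_{n})$ over all the copies. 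Without this mechanism the simultaneous ``paths consistent / descending combs canonical'' bookkeeping that you correctly identify as the crux is not achieved. (Your final extraction to a strongly indiscernible tree via Fact 2.6 is harmless but unnecessary; the finite coheir trees already yield $k$-$\mathrm{DCTP}_{1}$ by compactness.)
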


To start, we introduce the notion of a \textit{coheir tree} in a general theory $T$.
\begin{definition}\label{3-ct2}
Let $p$ be any type over $M$. We say that a tree $(b_{\eta})_{\eta \in \omega^{\leq n}}$ of realizations of $p$ is a \emph{coheir tree} in $p$ if

(1) for each $\mu \in \omega^{< n}$, $(\{b_{\eta}\}_{\eta \unrhd \mu \smallfrown\langle i \rangle })_{i= 0}^{\infty}$ (the sequence consisting of the subtrees above a fixed node) is a coheir Morley sequence over $M$.

(2) there are global coheir extensions $q_{0}, \ldots, q_{n}$ of $p$ so that for each $\mu \in \omega^{n-m}$, $b_{\mu} \models q_{m}|_{\{b_{\eta}\}_{\eta \rhd \mu}}$.
\end{definition}

The key lemma of this section allows us to construct coheir trees in any theory so that sequences of nodes with common meet are canonical Morley sequences. Abusing the language by nodes, paths, etc. we often refer to the tuples which they index; the term ``descending comb" will have a similar meaning in a tree of finite height or a set of subtrees as it does in $\omega^{<\omega}$.

\begin{lemma}\label{3-ct}
Let $p(x)$ be any type over $M$. Let $q(x)$ be a canonical coheir extension of $p(x)$. Let $b_{0}, \ldots, b_{n}$ be a coheir sequence over $M$ of realizations of $p$. Then there is a coheir tree indexed by $\omega^{\leq n}$, any path of which, read in the direction of the root, realizes $\mathrm{tp}(b_{0} \ldots b_{n}/M)$, and descending comb of which, read in lexicographic order, begin a canonical Morley sequence in $q(x)$.
\end{lemma}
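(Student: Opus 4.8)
The plan is to induct on $n$. The base case $n=0$ is immediate: the single node $b_{\langle\rangle}=b_0$ is a coheir tree, its unique path is $b_0$, and since $b_0\models p=q|_M$ every one-element comb trivially begins a canonical Morley sequence in $q$.

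For the inductive step I would apply the inductive hypothesis to the coheir sequence $b_0,\ldots,b_{n-1}$, obtaining a height-$(n-1)$ coheir tree $T'$ all of whose paths realize $\mathrm{tp}(b_0\ldots b_{n-1}/M)$ and all of whose descending combs begin canonical Morley sequences in $q$; I would retain the global coheirs $q_0,\ldots,q_{n-1}$ witnessing clause (2) of Definition 4.4 for $T'$. I then build the height-$n$ tree by placing $\omega$ copies $(T'_i)_{i\in\omega}$ of $T'$ as a coheir Morley sequence below a fresh root $b_{\langle\rangle}$, where $b_{\langle\rangle}$ realizes a global coheir $q_n$ extending $\mathrm{tp}(b_n/Mb_{<n})$ over the whole forest. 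Clause (1) at the root then says exactly that the copies form a coheir Morley sequence, clause (1) inside each copy is inherited from $T'$, and clause (2) holds with the coheirs $q_0,\ldots,q_{n-1}$ (reused, by invariance, for every copy) together with the new $q_n$. Since $q_n$ is $M$-invariant and each path through a copy realizes $\mathrm{tp}(b_{<n}/M)$, realizing $q_n$ over that path yields $\mathrm{tp}(b_0\ldots b_n/M)$, which is the path property.

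The real content is the descending-comb property. Here I would use the combinatorics of descending combs: relative to any node of the tree, a descending comb either stays inside a single child subtree or crosses that node's split exactly once, contributing a comb $W$ lying in one child subtree followed by a single node $v$ in a strictly later child subtree. Combs inside a copy are handled by the inductive hypothesis, so the only new case is a comb crossing the root split; since then $W\subseteq T'_{<j}$ for the later copy $T'_j$ containing $v$, it suffices to arrange that every node $v$ of a later copy is a $q$-coheir over the whole of the earlier copies, for then $v\models q|_{MW}$, and, $W$ already being a canonical Morley initial segment in $q$, the sequence $Wv$ continues it. Thus everything reduces to building the coheir Morley sequence $(T'_i)_i$ so that later copies are \emph{node-wise} $q$-coheirs over earlier ones.

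The main obstacle is exactly this last point, and it is where the canonical coheir must be used essentially rather than $\ind^{\mathrm{CK}}$ as a mere relation. I would realize each new copy as a coheir of $\mathrm{tp}(T'/M)$ over the earlier ones via a single global coheir $\hat q$ of the subtree-tuple type all of whose coordinate projections are the canonical coheir $q$; the internal structure of the copy, which is governed by the spine coheirs $q_0,\ldots,q_{n-1}$ and not by $q$, is then preserved automatically, while each coordinate becomes a $q$-coheir over the base. Producing such a $\hat q$ requires amalgamating, coordinate by coordinate, the internal spine coheir $q_m$ over a copy's own descendants with the external canonical coheir $q$ over the earlier copies; these are distinct finitely satisfiable types agreeing on $M$, and their compatibility is delicate. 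I expect to establish it using the construction of $q$ from Lemma 4.1 together with the existence and coheir chain condition for $\ind^{\mathrm{CK}}$ from Proposition 3.1 — the chain condition being what lets one pass from a single suitably placed copy to a full coheir Morley sequence of copies while maintaining node-wise $q$-independence all the way down the sequence — and finite-satisfiability amalgamation over the coheir-independent parameter sets involved. Verifying this reconciliation, and checking that it iterates consistently along the whole $\omega$-sequence, is the crux of the argument.
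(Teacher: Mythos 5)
Your overall architecture matches the paper's: induct on $n$, hang a coheir Morley sequence of copies of the smaller tree below a fresh root realizing a coheir of $\mathrm{tp}(b_{n}/Mb_{<n})$, observe that a descending comb either lives in one copy (handled by induction) or is a comb $W$ in an earlier copy followed by a single node $v$ of a later copy, and reduce everything to arranging that each node of a later copy realizes $q$ restricted to $M$ together with all earlier copies. You have also correctly isolated the crux. But the crux is exactly what you do not prove: you write ``I expect to establish it'' and propose building a single global coheir $\hat q$ of the whole subtree-tuple type all of whose coordinate projections equal $q$, to be obtained by ``amalgamating, coordinate by coordinate,'' the spine coheirs $q_{m}$ with $q$. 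That amalgamation is not carried out, and it is not the mechanism that works: nothing forces a joint finitely satisfiable type on the tuple whose every coordinate projection is the fixed type $q$, and you do not need one.

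What the paper actually proves, and what your argument is missing, is a tree version of the coheir chain condition: if $a \ind^{\mathrm{CK}}_{M} b$ and $I$ is a coheir tree in $\mathrm{tp}(b/M)$, then there is $I' \equiv_{M} I$ with $a \ind^{\mathrm{CK}}_{M} I'$ \emph{every node of which} realizes $\mathrm{tp}(b/Ma)$. This is proved by induction on the levels of $I$, alternating the coheir chain condition of $\ind^{\mathrm{CK}}$ with left extension for $\ind^{u}$ to insert each new level while keeping all nodes in the single type $\mathrm{tp}(b/Ma)$. Granting that claim, the placement step is immediate and requires no reconciliation of distinct coheirs: take $a$ to be the union of the earlier copies, let $b \models q|_{Ma}$, note that the defining property of the canonical coheir gives $a \ind^{\mathrm{CK}}_{M} b$, and apply the claim with $I$ the already-built tree to get the next copy, all of whose nodes realize $\mathrm{tp}(b/Ma) = q|_{Ma}$; the internal structure is preserved because $I' \equiv_{M} I$. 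One further point you elide: this process only produces a coheir \emph{sequence} of copies, not a coheir Morley sequence, so one must run it to a sufficiently long ordinal and then extract a coheir Morley sequence with the same properties, as the paper does, before attaching the new root. Without the tree chain condition your proof does not close, so as written there is a genuine gap at the step you yourself flag as the crux.
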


\begin{proof}
We need the following claim:

\begin{claim}\label{3-treecopy}
If $a \ind^{\mathrm{CK}}_{M}b$ and $I$ is a coheir tree in $\mathrm{tp}(b/M)$, then there is some $I' \equiv_{M} I$ with $a \ind^{\mathrm{CK}}_{M}I'$ (so in particular $I' \ind^{u}_{M} a $) each term of which satisfies $\mathrm{tp}(b/Ma)$.
\end{claim}

\begin{proof}
Let $I=\{b_{\eta}\}_{\eta \in \omega^{\leq n}}$; we find $I'=\{b'_{\eta}\}_{\eta \in \omega^{\leq n}}$ as desired. The proof is by downward induction on $k$: suppose $\{b'_{\eta}\}_{\eta \unrhd \zeta_{n-k}}$ is already constructed, and we construct $\{b'_{\eta}\}_{\eta \unrhd \zeta_{n-(k+1)}}$. First, $\{b'_{\eta}\}_{\eta \rhd \zeta_{n-(k+1)}}$ comes directly from the chain condition. Second, by left extension for $\ind^{u}$, find some copy $J$ of $\{b'_{\eta}\}_{\eta \rhd \zeta_{n-(k+1)}}$ over $M$ with $J \ind^{u}_{M}\{b'_{\eta}\}_{\eta \rhd \zeta_{n-(k+1)}}$ and some arbitrary term of $J$ satisfying the conjugate to $M\{b'_{\eta}\}_{\eta \rhd \zeta_{n-(k+1)}}$ of $\mathrm{tp}(b_{\zeta_{n-(k+1)}}/M\{b_{\eta}\}_{\eta \rhd \zeta_{n-(k+1)}})$ (that is, $q_{k
+1}(x)|_{M\{b'_{\eta}\}_{\eta \rhd \zeta_{n-(k+1)}}}$ from Definition \ref{3-ct2}). Then use the chain condition to find some $J' \equiv_{M\{b'_{\eta}\}_{\eta \rhd \zeta_{n-(k+1)}}}J$ with $J'\equiv_{Ma}\{b'_{\eta}\}_{\eta \rhd \zeta_{n-(k+1)}}$ and $a \ind^{\mathrm{CK}}_{M}\{b'_{\eta}\}_{\eta \rhd \zeta_{n-(k+1)}}J'$. Finally, using monotonicity on the right, discard all the terms of $J'$ other than the one corresponding to the chosen term of $J$, to obtain $b'_{\zeta_{n-(k+1)}}$.
\end{proof}

Now by induction, it suffices to show this for a coheir sequence $b_{0} \ldots, b_{n+1}$ assuming  $I_{n} = (b_{\eta})_{\eta \in \omega^{\leq n}}$ is already constructed for $b_{0} \ldots, b_{n}$. First, we find a long coheir sequence $\{I_{n}^{i}\}_{i=0}^{\alpha}$ of realizations of $\mathrm{tp}(I_{n}/M)$ so that each node of $I_{n}^{\gamma}$ satisfies $q(x)|_{M\{I_{n}^{i}\}_{i<\gamma}}$; then having taken it long enough, we can find a coheir Morley sequence $\{I_{n}^{i}\}_{i=0}^{\omega}$ with the same property, preserving the condition on descending combs. (Any descending comb inside of these copies will either lie inside of one copy of $I_{n}$, so will of course begin a descending Morley sequence inside of $q(x)$ by the induction hypothesis, or will consist of a descending comb inside one copy $I_{n}^{i}$ followed by an additional node of a later copy $I_{n}^{j}$ for $i<j$, which will indeed continue the Morley sequence in $q(x)$ begun by the previous nodes.) Suppose $\{I_{n}^{i}\}_{i<\gamma}$ already constructed; taking $a = \{I_{n}^{i}\}_{i<\gamma}$ in the above claim and $b \models q(x)|_{M\{I_{n}^{i}\}_{i<\gamma}}$, we can choose $I^{\gamma}_{n}$ to be the $I'$ given by the claim. 

Now let $q_{n+1}$ be a global extension, finitely satisfiable in $M$, of $\mathrm{tp}(b_{n+1}/Mb_{0}\ldots b_{n})$. Then we take $b\models q_{n+1}(x)|_{M\{I_{n}^{i}\}_{i=0}^{\infty}}$ as the new root, guaranteeing the condition on paths. Now reindex accordingly.

\end{proof}

We can now prove Theorem \ref{3-cms}. Let $q(x)$ be a canonical coheir extension of $\mathrm{tp}(b/M)$ and $k$ the degree of Kim-dividing for $\varphi(x, b)$ witnessed by a canonical Morley sequence in $q(x)$. Let $\{b_{i}\}_{i=0}^{n}$ be a coheir sequence over $M$ of realizations of $\mathrm{tp}(b/M)$ so that $\{\varphi(x, b_{i})\}_{i =0}^{n}$ is consistent. Then the coheir tree given by the previous lemma gives the first $n+1$ levels of an instance of $k$-$\mathrm{DCTP}_{1}$: the $k$-dividing witnessed by canonical Morley sequences in $q(x)$ gives the inconsistency condition for descending combs of size $k$, and the consistency of $\{\varphi(x, b_{i})\}_{i =0}^{n}$ gives the consistency of the paths. So if $n$ is without bound, we must have $k$-$\mathrm{DCTP}_{1}$ for $\varphi(x, y)$ by compactness, and thus $\mathrm{SOP}_{2}$ by lemma \ref{3-dctp1}. This concludes the proof of \ref{3-cms}.

We have some applications of this proof to a notion related to the $\mathrm{NATP}$ theories introduced by Ahn and Kim in \cite{AK20}, and studied in greater depth by Ahn, Kim and Lee in \cite{AK21}, assuming the $\mathrm{NATP}$ analogue of lemma \ref{3-dctp1}. The result for $\mathrm{NATP}$ theories would be interesting because while $\mathrm{NSOP}_{1}$ theories are $\mathrm{NATP}$ \cite{AK20}, as Ahn, Kim and Lee have shown in \cite{AK21}, there are examples of $\mathrm{NATP}$ $\mathrm{SOP}_{1}$ theories. The following is the original definition from \cite{AK20}:

\begin{definition}
The theory $T$ has \emph{$\mathrm{NATP}$ (the negation of the antichain tree property)} if there does not exist a formula $\varphi(x, y)$ and tuples $\{b_{\eta}\}_{\eta \in 2^{<\omega}}$ so that $\{\varphi(x, b_{\sigma \upharpoonleft n})\}_{n \in \omega}$ is $2$-inconsistent for any $\sigma \in 2^{\omega}$, but for pairwise incompararable $\eta_{1}, \ldots, \eta_{l} \in 2^{< \omega}$, $\{\varphi(x, b_{\eta_{i}})\}_{i=1}^{l}$ is consistent.
\end{definition}

In \cite{AK21}, Ahn, Kim and Lee define a theory to have $k$\textit{-}$\mathrm{ATP}$ if the above fails replacing $2$-inconsistency with $k$-inconsistency, and show that for any $k \geq 2$, a theory fails to be $\mathrm{NATP}$ (that is, has $2$-$\mathrm{ATP}$) if and only if it has $k$-$\mathrm{ATP}$. That is, they show the analogue for $\mathrm{NATP}$ theories of results of Kim and Kim in \cite{KK11} on $\mathrm{NSOP}_{2}$ theories, but of not those claimed by Chernikov and Ramsey in \cite{CR15}, nor of the above Lemma \ref{3-dctp1}. One might ask whether, for any $k$, the following definition is equivalent to the failure of $\mathrm{NATP}$:

\begin{definition}
The theory $T$ has \emph{$k$-$\mathrm{DCTP}_{2}$} if there exists a formula $\varphi(x, y)$ and tuples $\{b_{\eta}\}_{\eta \in 2^{<\omega}}$ so that $\{\varphi(x, b_{\sigma \upharpoonleft n})\}_{n \in \omega}$ is $k$-inconsistent for any $\sigma \in 2^{\omega}$, but for any descending comb $\eta_{1} \ldots, \eta_{l} \in 2^{< \omega}$, $\{\varphi(x, b_{\eta_{i}})\}_{i=1}^{l}$ is consistent.
\end{definition}

If so, then the following applies to $\mathrm{NATP}$ theories:

\begin{theorem}
Let $T$ be a theory so that, for all $k \geq 2$, $T$ does not have $k$-$\mathrm{DCTP}_{2}$. Let $M$ be any model and $b$ any tuple. Then there is a global type extending $\mathrm{tp}(b/M)$, finitely satisfiable in $M$, so that for any formula $\varphi(x, y)$ with parameters in $M$, if coheir Morley sequences in this type do not witness Kim-dividing of $\varphi(x, b)$, no coheir Morley sequence over $M$ starting with $b$ witnesses Kim-dividing of $\varphi(x, b)$ over $M$.
\end{theorem}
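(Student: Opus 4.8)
The plan is to let $q(x)$ be a canonical coheir extension of $\mathrm{tp}(b/M)$, as furnished by Lemma 4.1; this is automatically a global coheir, hence finitely satisfiable in $M$, and it does not depend on $\varphi$, so it serves uniformly for all formulas. Fixing $\varphi(x,y)$ over $M$, I assume canonical Morley sequences in $q$ do not witness Kim-dividing of $\varphi(x,b)$, and suppose toward a contradiction that some coheir Morley sequence $\{b_i\}_{i\in\omega}$ over $M$ starting with $b$ does witness Kim-dividing, say with degree $k \geq 2$, so that $\{\varphi(x,b_i)\}_{i\in\omega}$ is $k$-inconsistent. I will build from this an instance of $k$-$\mathrm{DCTP}_{2}$, contradicting the hypothesis on $T$.

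The construction runs parallel to the proof of Theorem 4.3, but with the roles of paths and descending combs exchanged. For each $n$, the initial segment $b_0,\dots,b_n$ is a coheir sequence of realizations of $\mathrm{tp}(b/M)$, so Lemma 4.5 yields a coheir tree of height $n$ whose paths, read toward the root, realize $\mathrm{tp}(b_0\dots b_n/M)$ and whose descending combs, read in lexicographic order, begin canonical Morley sequences in $q$. Now read off the two combinatorial conditions. Any path realizes the type of the $k$-inconsistent sequence $\{b_i\}$, so the formulas $\varphi(x,\cdot)$ indexed along it are $k$-inconsistent; equivalently, every chain of $k$ nodes is inconsistent. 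On the other hand, a descending comb begins a canonical Morley sequence in $q$, and since no such sequence witnesses Kim-dividing of $\varphi(x,b)$ and all canonical Morley sequences in $q$ share a type, the finitely many formulas $\varphi(x,\cdot)$ along the comb are consistent.

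Letting $n\to\infty$ and applying compactness (the $k$-inconsistency of each $k$-chain and the consistency of each finite descending comb are each first-order conditions on the relevant finite tuple of indexing variables) produces an $\omega^{<\omega}$-indexed family with $k$-inconsistent paths and consistent descending combs; restricting to a binary subtree preserves both properties, since chains and descending combs of the subtree remain chains and descending combs of the full tree, and after reindexing by $2^{<\omega}$ this is exactly an instance of $k$-$\mathrm{DCTP}_{2}$, the desired contradiction. The genuinely hard content is entirely contained in Lemma 4.5: the delicate point is the simultaneous control of paths (forced to realize the type of the chosen Kim-dividing sequence) and of descending combs (forced to be canonical Morley sequences in $q$) within a single tree. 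Given that lemma, the remaining work is the bookkeeping above --- checking that $k$-inconsistency passes to path-tuples of the right type, that consistency along combs follows uniformly from the single hypothesis on canonical Morley sequences, and that the passage to the binary indexing demanded by $k$-$\mathrm{DCTP}_{2}$ is harmless.
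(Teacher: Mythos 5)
Your proposal is correct and is essentially the paper's own argument: the paper proves this theorem with the single line ``This follows from the same construction,'' meaning exactly what you spell out --- apply Lemma 4.5 to initial segments of the $k$-inconsistent coheir Morley sequence, so that paths of the coheir tree inherit $k$-inconsistency while descending combs, beginning canonical Morley sequences in $q$, remain consistent, and then conclude $k$-$\mathrm{DCTP}_{2}$ by compactness. Your unpacking of the bookkeeping (the dual roles of paths and combs relative to Theorem 4.3, and the harmless restriction to a binary subtree) is accurate.
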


This follows from the same construction. The following corollary is standard; see Corollary 3.16 of \cite{CK09} for a similar argument:

\begin{corollary}
If, for all $k \geq 2$, $T$ does not have $k$-$\mathrm{DCTP}_{2}$, then Kim-forking (with respect to coheir Morley sequences) coincides with Kim-dividing (with respect to coheir Morley sequences).
\end{corollary}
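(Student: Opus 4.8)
The forward implication is immediate: a formula Kim-dividing over $M$ \emph{a fortiori} Kim-forks over $M$. For the converse I would follow the template of Corollary 3.16 of \cite{CK09}, in two moves. First I would pass from Kim-forking to quasi-dividing using the broom lemma; then I would promote quasi-dividing back to Kim-dividing by running the coheir-tree construction against the hypothesis that $T$ has no $k$-$\mathrm{DCTP}_{2}$.

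For the first move, apply Lemma 3.3 with the trivial $M$-invariant ideal $\mathcal{I} = \{\emptyset\}$, whose only member is the empty definable set. For this ideal, $\varphi(x,b) \vdash^{\mathcal{I}} \psi(x,c)$ is genuine implication and a set of formulas is $h$-inconsistent exactly when it is inconsistent, so the notions of $\mathcal{I}$-Kim-dividing, $\mathcal{I}$-Kim-forking and $\mathcal{I}$-quasi-dividing from Definition 3.2 coincide with Kim-dividing, Kim-forking and quasi-dividing with respect to coheir Morley sequences. The ``in particular'' clause of Lemma 3.3 then yields, with no hypothesis on $T$, that if $\varphi(x,b)$ Kim-forks over $M$ then it quasi-divides: there are $b_{1}, \ldots, b_{n} \equiv_{M} b$ with $\{\varphi(x,b_{i})\}_{i=1}^{n}$ inconsistent.

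For the second move I would argue by contradiction: suppose $\varphi(x,b)$ quasi-divides but does not Kim-divide. Since $\varphi(x,b)$ does not Kim-divide, \emph{no} coheir Morley sequence over $M$ starting with $b$ has $\{\varphi(x,b_{i})\}_{i \in \omega}$ inconsistent, so every coheir Morley sequence through $b$ is consistent for $\varphi$. I would then run the tree construction of Lemma 4.5 underlying Theorem 4.11 (replacing strong indiscernibility by Fact 2.6) to build a tree indexed by $\omega^{<\omega}$ whose descending combs begin coheir Morley sequences, hence are consistent, and along whose branches the inconsistent $n$-pattern supplied by the quasi-dividing witness is realized, so that each path is $n$-inconsistent. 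Taking the height to infinity and applying compactness produces a witness to $n$-$\mathrm{DCTP}_{2}$ for $\varphi$, contradicting the hypothesis. Hence $\varphi(x,b)$ Kim-divides, and combined with the first move this gives Kim-forking $\Rightarrow$ Kim-dividing, so the two coincide.

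The main obstacle is this second move, and within it the tree construction. Note the roles are forced: for $\mathrm{DCTP}_{2}$ we need the comb direction consistent, which is exactly what non-Kim-dividing controls, and the branch direction inconsistent, which is what quasi-dividing must supply; feeding the inconsistency into the combs instead would only yield $\mathrm{DCTP}_{1}$, i.e.\ $\mathrm{SOP}_{2}$, which we are not assuming absent. The delicate point is that the quasi-dividing conjugates produced by Lemma 3.3 are correlated through the auxiliary coheir parameter and do \emph{not} themselves form a coheir Morley sequence; since paths of a coheir tree are coheir sequences, one cannot simply install them as a branch of the tree of Lemma 4.5. I expect the resolution to require building the tree by iterated coheir extensions together with the coheir chain condition for $\ind^{\mathrm{CK}}$, seeding the inconsistent configuration in the vertical (branch) direction while keeping the horizontal (descending-comb) direction a canonical Morley sequence whose consistency is then guaranteed by the failure of Kim-dividing.
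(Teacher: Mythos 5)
Your first move is fine: with the trivial ideal, Lemma 3.3 does yield that Kim-forking implies quasi-dividing in any theory. But that step discards exactly the structure your second move needs, and the second move is where the proof actually lives --- and it is not there. The quasi-dividing witnesses $b_{1}, \ldots, b_{n} \equiv_{M} b$ are arbitrary conjugates with no coheir relationship among them, whereas the tree of Lemma 4.5 has as its paths realizations of the type of a \emph{coheir sequence}; you identify this mismatch yourself and then substitute for its resolution the sentence ``I expect the resolution to require\ldots''. That is a genuine gap: the claim that quasi-dividing together with the failure of $k$-$\mathrm{DCTP}_{2}$ forces Kim-dividing is never proved, nothing in the paper relates inconsistent families of arbitrary $M$-conjugates to coheir trees, and this claim is strictly stronger than what the corollary requires. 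As written, the argument does not close.

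The intended ``standard'' argument bypasses quasi-dividing and the broom lemma entirely; it uses Theorem 4.9 (the universal-witness statement immediately preceding this corollary) as a Kim's lemma, in exactly the pattern of the proof of Proposition 5.2. Suppose $\varphi(x,b)$ Kim-forks over $M$, say $\varphi(x,b) \vdash \bigvee_{i=1}^{n}\psi_{i}(x,c_{i})$ with each $\psi_{i}(x,c_{i})$ Kim-dividing over $M$. Absorb parameters into a single tuple $c = b c_{1}\cdots c_{n}$: by Claim 3.4 and an automorphism, Kim-dividing is preserved under adding dummy parameters, so each $\psi_{i}$, viewed as a formula in $c$, still Kim-divides over $M$. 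Let $q$ be the global type supplied by Theorem 4.9 for $\mathrm{tp}(c/M)$ and let $(c^{j})_{j \in \omega}$ be a coheir Morley sequence in $q$ starting with $c$; by that theorem it witnesses $k_{i}$-inconsistency of $\{\psi_{i}(x,c^{j})\}_{j}$ for every $i$ simultaneously. If $\varphi(x,b)$ did not Kim-divide over $M$, then $\{\varphi(x,b^{j})\}_{j}$ would be consistent, where $b^{j}$ is the relevant subtuple of $c^{j}$ and hence $(b^{j})_{j}$ is a coheir Morley sequence starting with $b$; a realization together with the pigeonhole principle would then satisfy infinitely many $\psi_{i_{0}}(x,c^{j})$ for a fixed $i_{0}$, contradicting $k_{i_{0}}$-inconsistency along the (still Morley) subsequence. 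This is shorter, uses only results already in place, and is what the citation of Corollary 3.16 of \cite{CK09} is pointing at.
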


\section{Conant-independence in $\mathrm{NSOP}_{2}$ theories}
We introduce a notion of independence which will generalize, in the proof of the main result of this paper, the role played by $\ind^{a}$ in the \textit{free amalgamation theories} introduced in \cite{Co15}. The notation $\ind^{K^{*}}$ comes from the related notion of Kim-independence from \cite{KR17}, $\ind^{K}$; a similar notion involving dividing with respect to \textit{all} (invariant) Morley sequences is suggested in tentative remarks of Kim in \cite{K09}.

\begin{definition}
Let $M$ be a model and $\varphi(x, b)$ a formula. We say $\varphi(x, b)$ \emph{Conant-divides} over $M$ if for \emph{every} coheir Morley sequence $\{b_{i}\}_{i \in \omega}$ over $M$ starting with $b$, $\{\varphi(x, b_i)\}_{i \in \omega}$ is inconsistent. We say $\varphi(x, b)$ \emph{Conant-forks} over $M$ if and only if it implies a disjunction of formulas Conant-dividing over $M$. We say $a$ is \emph{Conant-independent} from $b$ over $M$, written $a \ind^{K^{*}}_{M}b$, if $\mathrm{tp}(a/Mb)$ does not contain any formulas Conant-forking over $M$.
\end{definition}

Note that this definition differs from the standard definition of Conant-independence given in \cite{GFA}, in that it uses coheir Morley sequences rather than invariant Morley sequences. In \cite{KM22} Alex Kruckman and the author show how to carry out this proof with the standard Conant-independence. We may also dualize Theorem 3.10 of \cite{KL22}.

\begin{proposition}\label{3-cfd}
In any theory $T$, Conant-forking coincides with Conant-dividing for formulas, and $\ind^{K^{*}}$ has right extension.
\end{proposition}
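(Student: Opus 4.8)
The plan is to prove both halves of the proposition by adapting the standard ``forking = dividing'' argument from the $\mathrm{NSOP}_1$ setting, but with Conant-dividing (dividing with respect to \emph{every} coheir Morley sequence) in place of Kim-dividing. The crucial tool will be the canonical coheir $q(x)$ from Lemma 4.1 and the derived relation $\ind^{\mathrm{CK}}$ from Section 3. The key observation is this: because a canonical Morley sequence in $q(x)$ is in particular a coheir Morley sequence, Conant-dividing of $\varphi(x,b)$ implies that the canonical Morley sequence witnesses inconsistency; conversely, since $\ind^{\mathrm{CK}}$ and the $h^{\ind}$-machinery of Section 3 were built precisely so that $h^{\ind}$-Kim-forking reduces to $h^{\ind}$-quasi-dividing (Corollary 3.7.1, via the $\mathcal{I}$-broom lemma), I expect Conant-dividing to be governed by a proper $M$-invariant ideal for which the broom lemma applies.

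First I would show that Conant-forking implies Conant-dividing. The plan is to introduce the $M$-invariant ideal $\mathcal{I}$ consisting of those definable sets $\varphi(\mathbb{M},b)$ that Conant-divide over $M$ (i.e.\ are inconsistent along every coheir Morley sequence starting with $b$), and to verify $\mathcal{I}$ is genuinely an ideal: closure under subsets is immediate since a subformula stays inconsistent along the same sequence, and closure under finite unions is the content that needs the structure of coheir Morley sequences — here one uses that a coheir Morley sequence in a product/extended type restricts to coheir Morley sequences in the coordinates, together with Ramsey and indiscernibility to pass to a single witnessing sequence. With $\mathcal{I}$ in hand, apply the $\mathcal{I}$-broom lemma (Lemma 3.3): a formula $\mathcal{I}$-Kim-forking over $M$ then $\mathcal{I}$-quasi-divides, and one checks that for this particular $\mathcal{I}$ the notions of $\mathcal{I}$-Kim-dividing and Conant-dividing coincide, so that Conant-forking collapses to Conant-dividing. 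The broom lemma requires the coheir-independence hypothesis $c \ind^u_M a_1 \ldots a_N$, which is arranged exactly as in the proof of Lemma 3.3 by choosing $c$ to realize a global coheir extension over the relevant parameters.

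Next I would establish right extension for $\ind^{K^*}$. Suppose $a \ind^{K^*}_M b$ and $b \subseteq bc$; I want $a' \equiv_{Mb} a$ with $a' \ind^{K^*}_M bc$. The plan is the usual compactness/type-implication argument: if no such $a'$ exists, then $\mathrm{tp}(a/Mb)$ together with the requirement of Conant-independence from $bc$ is inconsistent, so $\mathrm{tp}(a/Mb)$ implies a (finite) disjunction of formulas over $Mbc$ that Conant-fork over $M$; hence some single formula in $\mathrm{tp}(a/Mb)$ implies this disjunction, and that formula then itself Conant-forks over $M$, contradicting $a \ind^{K^*}_M b$. This is the same pattern used for right extension in Lemma 3.10, and it only uses that Conant-forking is closed under implication by definition.

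The main obstacle is the finite-union closure of the ideal $\mathcal{I}$ (equivalently, showing Conant-dividing is preserved under disjunctions enough to run the broom lemma). Unlike Kim-dividing with respect to \emph{one} fixed invariant type, Conant-dividing quantifies over \emph{all} coheir Morley sequences, so when combining witnesses for $\varphi$ and $\psi$ one cannot simply take a common sequence; the fix is to produce a single coheir Morley sequence witnessing inconsistency of $\varphi \vee \psi$ by extracting it, via Ramsey and compactness, from the array of sequences witnessing each disjunct, using that an initial segment of a coheir Morley sequence in a suitably amalgamated global coheir type projects to coheir Morley sequences in each coordinate. Once this closure is secured, everything else is the broom-lemma bookkeeping already carried out in Section 3.
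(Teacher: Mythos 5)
Your right-extension argument is exactly the paper's (the compactness argument of Lemma 3.10), and the correct core of your first half is also present: the Conant-dividing formulas form an $M$-invariant ideal, with downward closure and finite-union closure obtained from the fact that a coheir Morley sequence in $bc$ projects to coheir Morley sequences in $b$ and in $c$, plus a pigeonhole. This is in substance what the paper does: it first shows Conant-dividing is insensitive to dummy parameters (using Claim 3.4 to extend a coheir Morley sequence in $a$ to one in $ab$), rewrites the disjunction with a single parameter $b$, and then pigeonholes along one coheir Morley sequence witnessing the failure of Conant-dividing of $\varphi(x,b)$, using that an infinite subsequence of a coheir Morley sequence is again one.

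Two things in your write-up would fail as stated, however. First, the detour through the $\mathcal{I}$-broom lemma is both unnecessary and insufficient: Lemma 3.3 only yields $\mathcal{I}$-\emph{quasi-dividing}, and you never bridge the gap from quasi-dividing back to Conant-dividing (in Chernikov--Kaplan that bridge is exactly where $\mathrm{NTP}_2$ is used; no analogue is available or needed here). Your parenthetical claim that $\mathcal{I}$-Kim-dividing coincides with Conant-dividing for this $\mathcal{I}$ is also unjustified, since the former is existential over coheir Morley sequences and the latter universal. The point you are missing is that once the ideal property is established, forking $=$ dividing is immediate: $\varphi(\mathbb{M},b)\subseteq\bigcup_{i}\varphi_{i}(\mathbb{M},a_{i})\in\mathcal{I}$ forces $\varphi(\mathbb{M},b)\in\mathcal{I}$ by downward closure, with no broom lemma involved. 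Second, your description of the union-closure step inverts the quantifier over sequences: Conant-dividing is a statement about \emph{every} coheir Morley sequence, so to show $\varphi(x,b)\vee\psi(x,c)$ Conant-divides you must take an \emph{arbitrary} coheir Morley sequence in $bc$ and derive inconsistency via projection and pigeonhole; there is nothing to ``extract'' or ``amalgamate,'' and ``producing a single witnessing sequence'' would only establish Kim-dividing. That universality is precisely why no Kim's-lemma-type input is needed and why the proposition holds in an arbitrary theory.
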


\begin{proof}
We see first of all that Conant-dividing is preserved under adding and removing unused parameters: it suffices to show that if $\models \forall x \varphi(x, a) \leftrightarrow \varphi'(x, ab)$ then $\varphi(x, a)$ Conant-divides over $M$ if and only if $\varphi'(x, ab)$ Conant-divides over $M$. Let $\{a_{i}b_{i}\}_{i \in \omega}$ be a coheir Morley sequence starting with $ab$ witnessing the failure of Conant-dividing of the latter; then $\{a_{i}\}_{i \in \omega}$ witnesses the failure of Conant-dividing of the former. Conversely, let $\{a_{i}\}_{i \in \omega}$ be a coheir Morley sequence starting with $a$ witnessing the failure of Conant-dividing of $\varphi(x, a)$; then by Claim \ref{3-lext} and an automorphism there are $\{b_{i}\}_{i \in \omega}$ so that $\{a_{i}b_{i}\}_{i \in \omega}$ is a coheir Morley sequence starting with $ab$, and this will witness the failure of Conant-dividing of $\varphi'(x, ab)$. The result is now standard, following, say, the proof in \cite{KR17} of the analogous fact for Kim-dividing under Kim's lemma. Suppose $\varphi(x, b)$ Conant-forks over $M$ but does not Conant-divide over $M$; by the above we can assume it implies a disjunction of the form $\bigvee_{i=1}^{n} \varphi_{i}(x, b)$ where $\varphi_{i}(x, b)$ Conant-divides over $M$. Let $\{b_{i} \}_{i \in \omega}$ be a coheir Morley sequence starting with $b$ witnessing the failure of Conant-dividing, so there is some $a$ realizing $\{\varphi(x, b_{i})\}_{i \in \omega}$. Then by the pigeonhole principle, there is some $1 \leq k \leq n$ so that $a$ realizes infinitely many of the $\varphi_{k}(x, b_{i})$. By an automorphism this contradicts Conant-dividing of $\varphi_{k}(x, b).$

Right extension is standard and exactly as in Lemma \ref{3-derived}: if $a \ind^{K^{*}}_{M}b$ but there is no $a' \equiv_{Mb} a$ with $a' \ind^{K^{*}}_{M}bc$, then $\mathrm{tp}(a/Mb)$ must imply a disjunction of formulas with parameters in $Mbc$ Conant-forking over $M$; some formula in $\mathrm{tp}(a/Mb)$ must then imply this disjunction, which will then Conant-fork over $M$, contradicting $a \ind^{K^{*}}_{M}b$.
\end{proof}

The following is immediate from Theorem 4.3:

\begin{corollary}
Let $T$ be $\mathrm{NSOP}_{2}$. Then a formula Conant-divides (so Conant-forks) over $M$ if and only if it Kim-divides with respect to some (any) canonical Morley sequence.
\end{corollary}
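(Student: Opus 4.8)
The plan is to unwind the definitions and let Theorem 4.3 carry the entire load; once the relevant objects are matched up, each direction of the equivalence is a single invocation. First I would record the standing facts. By Lemma 4.1 a canonical coheir $q(x)$ extending $\mathrm{tp}(b/M)$ exists, so canonical Morley sequences starting with $b$ exist, and any such sequence is in particular a coheir Morley sequence over $M$ starting with $b$. This last observation is the only point requiring any attention, since it is what places canonical Morley sequences within the scope of the quantifier ``every coheir Morley sequence'' that appears both in the definition of Conant-dividing and in the conclusion of Theorem 4.3.

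For the direction from Conant-dividing to Kim-dividing along a canonical Morley sequence, I would argue as follows: if $\varphi(x,b)$ Conant-divides over $M$, then by definition $\{\varphi(x,b_{i})\}_{i\in\omega}$ is inconsistent for every coheir Morley sequence $\{b_{i}\}$ starting with $b$, and applying this to any canonical Morley sequence exhibits Kim-dividing of $\varphi(x,b)$ with respect to it. For the converse, suppose some canonical Morley sequence witnesses Kim-dividing of $\varphi(x,b)$. Then Theorem 4.3 applies directly and yields that every coheir Morley sequence starting with $b$ witnesses Kim-dividing, which is precisely the statement that $\varphi(x,b)$ Conant-divides over $M$.

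The ``some (any)'' phrasing is settled by the same mechanism: if $\varphi(x,b)$ Kim-divides along some canonical Morley sequence, Theorem 4.3 upgrades this to Kim-dividing along every coheir Morley sequence, hence along every canonical one, while the reverse implication is trivial given the existence of canonical Morley sequences noted above. Finally, the parenthetical ``so Conant-forks'' is immediate from Proposition 5.2, under which Conant-forking coincides with Conant-dividing for formulas. I do not anticipate any genuine obstacle here: all of the combinatorial content, namely that witnessing along a single canonical Morley sequence forces witnessing along every coheir Morley sequence through the coheir-tree construction producing an instance of $k$-$\mathrm{DCTP}_{1}$ and hence $\mathrm{SOP}_{2}$, is already discharged in Theorem 4.3, so the remaining task is purely definitional bookkeeping.
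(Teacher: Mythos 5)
Your proof is correct and is exactly the route the paper intends: the paper simply declares the corollary ``immediate from Theorem 4.3,'' and your unwinding of the definitions (canonical Morley sequences exist by Lemma 4.1 and are coheir Morley sequences, Theorem 4.3 upgrades one canonical witness to all coheir Morley sequences, and Proposition 5.2 handles the forking parenthetical) is precisely the bookkeeping being left to the reader.
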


We develop the theory of Conant-independence in $\mathrm{NSOP}_{2}$ theories in analogy with the theory of Kim-independence in $\mathrm{NSOP}_{1}$ theories.

\begin{proposition} \label{3-ccc}
(Canonical Chain Condition): Let $T$ be $\mathrm{NSOP}_{2}$ and suppose $a \ind^{K^{*}}_{M} b$. Then for any canonical Morley sequence $I$ starting with $b$, we can find some $I' \equiv_{Mb} I$ indiscernible over $a$; any such $I'$ will satisfy $a \ind^{K^{*}}_{M} I'$.
\end{proposition}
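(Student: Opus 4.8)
The plan is to establish the two assertions of the statement separately: first the existence of an $Ma$-indiscernible copy $I'$ of $I$ fixing $Mb$, and then the Conant-independence $a \ind^{K^{*}}_{M} I'$ for any such $I'$. Throughout I read ``indiscernible over $a$'' as indiscernible over $Ma$ (the ambient base $M$ being understood), and I observe that any $I' \equiv_{Mb} I$ is automatically a canonical Morley sequence, in particular $M$-indiscernible, since it has the same type over $M$ as $I$. This last remark is what lets the independence half of the argument go through.

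For existence, write $I = (b_i)_{i < \omega}$ with $b_0 = b$; as $I$ is $M$-indiscernible each $b_i \equiv_M b$, so I would transport $\mathrm{tp}(a/Mb)$ along $I$ to types $p_i(x)$ over $Mb_i$ with the property that $\mathrm{tp}(a^{*}b_i/M) = \mathrm{tp}(ab/M)$ for any $a^{*} \models p_i$. The essential use of the hypothesis enters here: because $a \ind^{K^{*}}_{M} b$, no formula of $\mathrm{tp}(a/Mb)$ Conant-divides over $M$, so by Corollary 5.3 no such formula Kim-divides with respect to the canonical Morley sequence $I$; hence $\{\varphi(x,b_i)\}_{i<\omega}$ is consistent for each $\varphi(x,b) \in \mathrm{tp}(a/Mb)$, and by compactness $\bigcup_i p_i$ is consistent. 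Fixing $a^{*} \models \bigcup_i p_i$, we get $\mathrm{tp}(a^{*}b_i/M) = \mathrm{tp}(ab/M)$ for all $i$, so all the $b_i$ realize a single common type over $Ma^{*}$.

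Next I would extract, by Ramsey and compactness, an $Ma^{*}$-indiscernible sequence $J = (c_j)_{j<\omega}$ based on $I$ over $Ma^{*}$. Being based on $I$ over $M$ and $M$-indiscernible, $J$ has the same EM-type over $M$ as $I$, whence $J \equiv_M I$; moreover each $c_j$ inherits the common type of the $b_i$ over $Ma^{*}$, so $\mathrm{tp}(a^{*}c_0/M) = \mathrm{tp}(ab/M)$. Choosing $\tau \in \mathrm{Aut}(\mathbb{M}/M)$ with $\tau(J) = I$ and setting $a' = \tau(a^{*})$, I obtain $I$ indiscernible over $Ma'$ together with $\mathrm{tp}(a'b/M) = \mathrm{tp}(a^{*}c_0/M) = \mathrm{tp}(ab/M)$, that is $a' \equiv_{Mb} a$; a final automorphism fixing $Mb$ and sending $a'$ to $a$ then carries $I$ to the desired $I' \equiv_{Mb} I$, indiscernible over $Ma$. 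I expect this last maneuver to be the main obstacle: naive extraction produces an indiscernible sequence only up to an $M$-automorphism and so destroys the condition over $Mb$, and the point is that realizing all the transported copies $p_i$ at once (which is exactly what the independence hypothesis provides) is what permits transferring the indiscernible sequence back while recovering $\equiv_{Mb}$.

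For the independence, let $I' \equiv_{Mb} I$ be indiscernible over $Ma$, and suppose toward a contradiction that $a \nind^{K^{*}}_{M} I'$. Then some $\varphi(x,\overline{c})$, with $\overline{c}$ a finite increasing tuple from $I'$, lies in $\mathrm{tp}(a/MI')$ and Conant-divides over $M$, hence by Corollary 5.3 Kim-divides with respect to a canonical Morley sequence. Applying Theorem 4.3 with $\overline{c}$ in place of $b$ yields a finite bound $N$ on the length of a coheir sequence of realizations of $\mathrm{tp}(\overline{c}/M)$ whose corresponding instances of $\varphi$ are jointly consistent. But the consecutive blocks $\overline{d}_j$ of $I'$ of the size of $\overline{c}$ form a coheir sequence over $M$ (by transitivity of finite satisfiability, since $I'$ is a canonical Morley sequence), each $\overline{d}_j \equiv_M \overline{c}$, and by $Ma$-indiscernibility $\overline{d}_j \equiv_{Ma} \overline{c}$, so $a \models \varphi(x,\overline{d}_j)$ for all $j$. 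Thus $\{\varphi(x,\overline{d}_j)\}_{j<\omega}$ is consistent along an infinite coheir sequence of realizations of $\mathrm{tp}(\overline{c}/M)$, contradicting the bound $N$; therefore $a \ind^{K^{*}}_{M} I'$.
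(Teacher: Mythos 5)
Your proof is correct and follows essentially the same route as the paper: the existence half is the standard argument (no Conant-dividing formula in $\mathrm{tp}(a/Mb)$, hence by Corollary 5.3 the union of the transported types along the canonical Morley sequence is consistent, then Ramsey, compactness and an automorphism over $Mb$), and the independence half uses the same consecutive-blocks trick. The only cosmetic difference is that for the second half you detour through the finite bound of Theorem 4.3, where the paper concludes directly from the definition of Conant-dividing, since the block sequence is itself a coheir Morley sequence along which $a$ witnesses consistency.
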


\begin{proof}
This is similar to the proof of, say, the analogous fact about Kim-independence in $\mathrm{NSOP}_{1}$ theories (Proposition 3.21 of \cite{KR17}). The existence of such an $I'$ follows from the previous corollary by Ramsey and compactness. To get $a \ind^{K^{*}}_{M} I'$, let $I'= \{b_{i}\}_{i \in \omega}$; it suffices to show $a \ind^{K^{*}}_{M} b_{0} \ldots b_{n-1}$ for any $n$. But $\{b_{in}b_{in+1} \ldots b_{in+(n-1)} \}_{i \in \omega}$ is a coheir Morley sequence over $M$ starting with $b_{0} \ldots b_{n-1}$, each term of which satisfies $\{b_{0}\ldots b_{n-1}/Ma\}$, so $a \ind^{K^{*}}_{M} b_{0} \ldots b_{n-1}$ follows.
\end{proof}

\begin{theorem}\label{3-symm}
Let $T$ be $\mathrm{NSOP}_{2}$. Then Conant-independence is symmetric.
\end{theorem}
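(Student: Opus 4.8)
The plan is to mirror the proof that Kim-independence is symmetric in $\mathrm{NSOP}_{1}$ theories (Theorem 5.16 of \cite{KR17}), exploiting the strong form of Kim's lemma now available through Corollary 5.2.1 (Conant-dividing is detected by every canonical Morley sequence) together with the canonical chain condition, Proposition 5.3. It suffices to prove $a \ind^{K^*}_{M} b \Rightarrow b \ind^{K^*}_{M} a$, the converse being identical after exchanging $a$ and $b$.

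First I would isolate the following reduction: \emph{if $a \ind^{K^*}_{M} b$ and there is a canonical Morley sequence $(a_{i})_{i<\omega}$ with $a_{0}=a$ that is indiscernible over $Mb$, then $b \ind^{K^*}_{M} a$.} Indeed, were some $\psi(y,a)\in \mathrm{tp}(b/Ma)$ to Conant-divide over $M$, then by Corollary 5.2.1 the sequence $(a_{i})_{i<\omega}$ would witness it, so $\{\psi(y,a_{i})\}_{i<\omega}$ is inconsistent; but $\models\psi(b,a_{0})$ and $(a_{i})_{i<\omega}$ is $Mb$-indiscernible, whence $\models\psi(b,a_{i})$ for all $i$ and $b$ realizes $\{\psi(y,a_{i})\}_{i<\omega}$, a contradiction. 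So the theorem reduces to constructing, from the single hypothesis $a\ind^{K^*}_{M}b$, a canonical Morley sequence through $a$ that is indiscernible over $Mb$.

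Proposition 5.3 provides precisely the \emph{dual} object: from $a\ind^{K^*}_{M}b$ it yields a canonical Morley sequence $(b_{j})_{j<\omega}$ through $b$ that is indiscernible over $Ma$ and satisfies $a\ind^{K^*}_{M}(b_{j})_{j<\omega}$. The remaining task is to transpose this configuration — to convert an $Ma$-indiscernible sequence of copies of $b$ around a single $a$ into an $Mb$-indiscernible sequence of copies of $a$ around a single $b$. Fixing a canonical Morley sequence $(a_{i})_{i<\omega}$ through $a$ in a canonical coheir of $\mathrm{tp}(a/M)$, invariance gives for each $i$ a copy $b_{i}\equiv_{M}b$ with $b_{i}a_{i}\equiv_{M}ba$ and $a_{i}\ind^{K^*}_{M}b_{i}$; the transposition amounts to amalgamating the $b_{i}$ into one $b^{*}$ with $b^{*}\equiv_{Ma_{i}}b$ for all $i$, equivalently to realizing $\{\psi(y,a_{i})\}_{i<\omega}$ by a single tuple, which is exactly the sequence the reduction demands (now indiscernible over $Mb^{*}$).

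I expect this amalgamation to be the main obstacle, and it is where $\mathrm{NSOP}_{2}$ is genuinely used. The chain condition is inherently one-directional — it fixes the left side and spreads the right — so it cannot by itself effect the transposition without circularity, since spreading $a$ over $(b_{j})_{j}$ would already require $(b_{j})_{j}\ind^{K^*}_{M}a$, the very symmetry being proved. The plan is therefore to build $b^{*}$ by an inductive three-model amalgamation along the $\ind^{K^*}$-independent sequence $(a_{i})_{i<\omega}$, each step carried out with Proposition 5.3 and controlled by the uniformity of Theorem 4.3, which bounds the length of \emph{consistent coheir sequences}, not merely Morley sequences, and so constrains the amalgamation at every finite stage. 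This step plays for Conant-independence the role the independence theorem plays in \cite{KR17}; carrying it out — in effect a weak independence theorem for $\ind^{K^*}$ over a model — is the crux, and it is here, through Theorem 4.3 and the descending-comb combinatorics of Lemma 2.8 underlying Corollary 5.2.1, that the $\mathrm{NSOP}_{2}$ hypothesis does its essential work.
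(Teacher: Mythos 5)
Your opening reduction is correct: by Corollary 5.2.1, a canonical Morley sequence $(a_i)_{i<\omega}$ through $a$ that is indiscernible over $Mb$ would immediately give $b \ind^{K^*}_M a$. You have also correctly identified that the one-directional chain condition (Proposition 5.3) cannot be applied with $a$ on the left without circularity. But the step you defer --- the ``transposition''/amalgamation producing $b^*$ --- is exactly the content of the theorem, and the route you sketch for it does not work in this development. You propose to carry it out via ``in effect a weak independence theorem for $\ind^{K^*}$ over a model''; in the paper that statement is Proposition 5.7, and its proof (specifically the claim inside it) begins ``It is enough by symmetry of $\ind^{K^*}$\dots'', i.e.\ the weak independence theorem is downstream of symmetry, not a tool for proving it. The same is true in \cite{KR17}: the independence theorem there is proved from symmetry of Kim-independence, so your analogy inverts the logical order. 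Nothing in your sketch indicates how to realize $\{\psi(y,a_i)\}_{i<\omega}$ by a single tuple without already having some form of the conclusion.

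The paper's proof avoids the amalgamation entirely by arguing by contradiction, following the tree construction of Lemma 5.11 of \cite{KR17}. Assuming $a \ind^{K^*}_M b$ but $b \nind^{K^*}_M a$, one builds trees $(\{a_\eta\}_{\eta\in\omega^{\le n}},\{b_\sigma\}_{\sigma\in\omega^n})$ by repeatedly re-choosing the root (using right extension for $\ind^{K^*}$) and then spreading the whole tree into a canonical Morley sequence indiscernible over the new root (Proposition 5.3 --- used only in the legitimate direction, with the root on the left). The payoff is that paths realize copies of $ab$, hence are consistent with the Conant-dividing formula $\varphi(x,a)\in\mathrm{tp}(b/Ma)$, while every descending comb of $a_\eta$'s forms a coheir sequence; Theorem 4.3 --- precisely because it bounds consistent \emph{coheir sequences} rather than only Morley sequences, the strengthening you noticed but deployed elsewhere --- makes descending combs of length $k$ inconsistent. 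That is an instance of $k$-$\mathrm{DCTP}_1$, contradicting $\mathrm{NSOP}_2$ via Lemma 2.8. So the $\mathrm{NSOP}_2$ hypothesis enters through the tree combinatorics, not through an amalgamation; as written, your proposal has a genuine gap at its central step.
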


\begin{proof}
Suppose otherwise, so for some $a, b \in \mathbb{M}$, $a \ind^{K^{*}}_{M} b$ but $b$ is Conant-dependent on $a$ over $M$. We use $a \ind^{K^{*}}_{M} b$ to build trees as in the proof of symmetry of Kim-independence for $\mathrm{NSOP}_{1}$ theories (the construction is Lemma 5.11 of \cite{KR17}.) Specifically, what we want is, for any $n$, a tree $(I_{n}, J_{n})= (\{a_{\eta}\}_{\eta \in \omega^{\leq n}}, \{b_{\sigma}\}_{\sigma \in \omega^{n}})$, infinitely branching at the first $n+1$ levels and then with each $a_{\sigma}$ for $\sigma \in \omega^{n}$ at level $n+1$ followed by a single additional leaf $b_{\sigma}$ at level $n+2$, satisfying the following properties:

(1) For $\eta \unlhd \sigma$, $a_{\eta}b_{\sigma} \equiv_{M} ab$

(2) For $\eta \in \omega^{< n}$, the subtrees above $\eta$ form a canonical coheir sequence indiscernible over $a_{\eta}$, so by Proposition \ref{3-ccc}, $a_{\eta}$ is Conant-independent over $M$ from those branches taken together.

Suppose $(I_{n}, J_{n})$ already constructed; we construct $(I_{n+1}, J_{n+1})$. We see that the root $a_{\emptyset}$ of $(I_{n}, J_{n})$ is Conant-independent from the rest of the tree, $(I_{n}, J_{n})^{*}$: for $n = 0$ this is just the assumption $a \ind^{K^{*}}_{M} b$, where we allow $a_{\emptyset}b_{\emptyset} = ab$, while for $n > 0$ this is (2). So by extension we find $a_{\emptyset}' \equiv_{M(I_{n}J_{n})^{*}} a_{\emptyset}$ (so guaranteeing (1)), to be the root of $(I_{n+1}, J_{n+1})$, with $a \ind^{K^{*}}_{M} I_{n}J_{n}$. Then by Proposition \ref{3-ccc}, find some canonical Morley sequence $\{(I_{n},J_{n})^{i}\}_{i \in \omega}$ starting with $(I_{n},J_{n})$ indiscernible over $Ma'_{\emptyset}$, guaranteeing (2), and reindex accordingly.

Now let $\varphi(x, a) \in \mathrm{tp}(b/Ma)$ (so $\varphi(x, y)$ is assumed to have parameters in $M$) witness the Conant-dependence of $b$ on $a$ over $M$ and let $k$ be the (strict) bound supplied by Theorem 4.3. We show $I_{n}$ gives the first $n+1$ levels of an instance of $k$-$\mathrm{DCTP}_{1}$ for $\varphi(x, y)$, giving a contradiction to $\mathrm{NSOP}_{2}$ by compactness and lemma \ref{3-dctp1}. Consistency of the paths comes from (1). As for the inconsistency of a descending comb of size $k$, it follows from (2) (and the same reasoning as in the proof of Lemma \ref{3-ct}) that a descending comb forms a coheir sequence, so the inconsistency follows by choice of $k$.
\end{proof}

Note that by constructing a tree of size $\kappa$ and using an Erdős-Rado version of fact \ref{3-tt} (see Lemma 5.10 of \cite{KR17} for a result of this kind for similar kind of indiscernible tree, itself based on Theorem 1.13 of \cite{GIL02}), we could have assumed the tree we constructed in the above proof to be \textit{strongly indiscernible}. It follows that we could have only used that if a canonical Morley sequence witnesses Kim-dividing of a formula, then so does any coheir Morley sequence; the statement of Theorem 4.3 is somewhat stronger. (In fact, by using a local version of the chain condition--if $a \ind^{K^{*}}_{M} b$ and $\models \varphi(a, b)$, then there is some coheir Morley sequence $I=\{b_{i}\}_{i \in \omega}$ so that $b_{i} \equiv_{M} b$, $\models \varphi(a, b_{i})$ for $i \in \omega$, and $a \ind^{K^{*}}_{M}I$--we could have avoided Theorem 4.3 altogether up to this point, but we have not yet found a suitable replacement for the below ``weak independence theorem" that does not require it. We leave the details to the reader.)

We next aim to prove a version of the ``weak independence theorem." To formulate this, we need the following strengthening of Lemma \ref{3-ccoheir}:

\begin{lemma}\label{3-sccoheir}
Let $p(x)$ be a type over $M$. Then there is some global extension $q(x)$ of $p(x)$ so that, for all tuples $b \in \mathbb{M}$ if $c \in \mathbb{M}$ with $c \models q(x)|_{Mb}$, then for any $a \in \mathbb{M}$ there is $a' \equiv_{Mc}a$ with $a' \in \mathbb{M}$ so that $\mathrm{tp}(a'c/Mb)$ extends to a canonical coheir of $\mathrm{tp}(a'c/M)=\mathrm{tp}(ac/M)$. So in particular, $q(x)$ is a canonical coheir of $p(x)$.
\end{lemma}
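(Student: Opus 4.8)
The plan is to take for $q(x)$ any canonical coheir of $p(x)$ produced by Lemma 4.1, and to verify that it already satisfies the stronger conclusion; no finer construction of $q$ is needed. Recall that such a $q$ is $\mathrm{tp}(c'/\mathbb{M})$ for a realization $c'$ of $p$, living in a very large $\mathbb{M}' \succ \mathbb{M}$, with $\mathbb{M} \ind^{\mathrm{CK}}_{M} c'$. Fixing $b \in \mathbb{M}$ and $c \models q|_{Mb}$, the ``in particular'' clause is immediate from Lemma 4.1, and since $c$ and $c'$ both realize $q|_{Mb}$ there is $\tau \in \Aut(\mathbb{M}'/Mb)$ with $\tau(c') = c$. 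The whole problem is therefore $\tau$-conjugate to the same problem stated for the generic realization $c'$ in place of $c$ and $a_{0} := \tau^{-1}(a)$ in place of $a$, so I would first reduce to producing, for this $a_{0}$, a companion over the genuinely generic $c'$.

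The key step is to attach to $c'$ a companion $\hat{a}$ with a prescribed type over $Mc'$ while keeping the pair independent from all of $\mathbb{M}$ over $M$: concretely, I want $\hat{a} \equiv_{Mc'} a_{0}$ with $\mathbb{M} \ind^{\mathrm{CK}}_{M} \hat{a} c'$. Since $\ind^{\mathrm{CK}}$ has neither left extension nor stationarity, I cannot simply extend $c'$ on the right while keeping $\mathbb{M}$ fixed. Instead I would apply right extension to $\mathbb{M} \ind^{\mathrm{CK}}_{M} c'$, enlarging the right-hand side from $c'$ to $c' a_{0}$: this yields a copy $\mathbb{M}^{*} \equiv_{Mc'} \mathbb{M}$ with $\mathbb{M}^{*} \ind^{\mathrm{CK}}_{M} c' a_{0}$. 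Composing with an automorphism $g \in \Aut(\mathbb{M}'/Mc')$ carrying $\mathbb{M}^{*}$ back to $\mathbb{M}$ and setting $\hat{a} := g(a_{0})$, invariance gives $\mathbb{M} \ind^{\mathrm{CK}}_{M} c' \hat{a}$ with $\hat{a} \equiv_{Mc'} a_{0}$, as desired. This maneuver---moving the large set $\mathbb{M}$ rather than the small tuples $c', a_{0}$---is what replaces the missing left extension, and I expect it to be the main obstacle to get right.

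With $\hat{a}$ in hand, the pair $\hat{a} c'$ realizes a canonical coheir of its type over $M$: since $\mathbb{M} \ind^{\mathrm{CK}}_{M} \hat{a} c'$, the global type $Q := \mathrm{tp}(\hat{a} c'/\mathbb{M})$ satisfies the conclusion of Lemma 4.1 (by left monotonicity and invariance of $\ind^{\mathrm{CK}}$), and $Q|_{Mb} = \mathrm{tp}(\hat{a} c'/Mb)$. I would then push everything back through $\tau$: setting $a' := \tau(\hat{a})$, the fact that $\tau$ fixes $M$ and sends $c' \mapsto c$ gives $a' \equiv_{Mc} a$, while $\tau(Q)$ is a canonical coheir of $\mathrm{tp}(a'c/M)$ whose restriction to $Mb$ (fixed by $\tau$) is exactly $\mathrm{tp}(a'c/Mb)$; Remark 3.12 lets me view $\tau(Q)$ as a canonical coheir as computed in $\mathbb{M}$. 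Finally, since the required property of $a'$ depends only on $\mathrm{tp}(a'/Mbc)$, a type over the small set $Mbc \subseteq \mathbb{M}$, saturation of $\mathbb{M}$ lets me replace $a'$ by a realization inside $\mathbb{M}$, completing the argument. The only points needing care beyond the key step are the bookkeeping of the base in right extension (ensuring the copy is a copy over $Mc'$, not merely over $c'$, which holds because $M$ always sits in the base) and the invariance of canonical coheirs under $\tau$, both routine.
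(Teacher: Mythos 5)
Your proof is correct, but it takes a genuinely different route from the paper's. The paper does not take an arbitrary canonical coheir and upgrade it pointwise; instead it front-loads the work by applying existence for $\ind^{\mathrm{CK}}$ to a full elementary copy of the monster, obtaining $\mathbb{M}_{1} \equiv_{M} \mathbb{M}$ with $\mathbb{M} \ind^{\mathrm{CK}}_{M} \mathbb{M}_{1}$, and taking $q = \mathrm{tp}(c''/\mathbb{M})$ for $c'' \models p$ inside $\mathbb{M}_{1}$. The companion is then found ready-made: by saturation of $\mathbb{M}_{1}$ there is $a'' \in \mathbb{M}_{1}$ with $a''c'' \equiv_{M} ac$, and right monotonicity of $\ind^{\mathrm{CK}}$ immediately gives $\mathbb{M} \ind^{\mathrm{CK}}_{M} a''c''$, so $\mathrm{tp}(a''c''/\mathbb{M})$ is canonical; no right extension is invoked at the point of use. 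Your argument instead keeps the right-hand side small ($\mathbb{M} \ind^{\mathrm{CK}}_{M} c'$ for a single realization) and manufactures the companion on demand, via right extension from $Mc'$ to $Mc'a_{0}$ followed by an automorphism over $Mc'$ pulling the displaced copy of $\mathbb{M}$ back --- which is a correct substitute for the missing left extension, and which shows the slightly stronger fact that \emph{every} coheir produced as in Lemma 4.1 is already strong. The trade-off is that you must apply right extension of $\ind^{\mathrm{CK}}$ with the full monster on the left-hand side; this is legitimate (the paper's compactness proof of right extension does not care about the length of the left tuple, and the paper itself uses existence with $A = B = \mathbb{M}$), but the paper's choice of $\mathbb{M}_{1}$ avoids the issue entirely. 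One small simplification to your last step: there is no need to transport $Q = \mathrm{tp}(\hat{a}c'/\mathbb{M})$ by $\tau$ and appeal to Remark 3.12 --- since $\tau$ fixes $Mb$ pointwise, $\mathrm{tp}(a'c/Mb)$ is literally the same type over $Mb$ as $\mathrm{tp}(\hat{a}c'/Mb)$, so it extends to the canonical coheir $Q$ itself.
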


\begin{proof}
Working again in a very large $\mathbb{M}' \succ \mathbb{M}$, find $\mathbb{M}_{1} \equiv_{M} \mathbb{M}$ with $\mathbb{M} \ind^{\mathrm{CK}}_{M} \mathbb{M}_{1}$ using full existence for $\mathrm{\ind^{CK}}$. Find a realization $c''$ of $p(x)$ in $\mathbb{M}_{1}$ and let $q(x)$ be its type over $\mathbb{M}$. Now suppose $b \in \mathbb{M}$ and $c \in \mathbb{M}$ with $c \models q(x)|_{Mb}$, and let $a \in \mathbb{M}$.  Then there is some $a'' \in \mathbb{M}_{1}$ with $a''c'' \equiv_{M} ac$. Because $c'' \equiv_{Mb} c$, there is some $a' \in \mathbb{M}$ with $a''c'' \equiv_{Mb} a'c$. Together with $a''c'' \equiv_{M} ac$, it follows that $a' \equiv_{Mc} a$. And $\mathrm{tp}(a'c/Mb)$ extends to $\mathrm{tp}(a''c''/\mathbb{M})$, which it remains to show is canonical. But by right monotonicity, $\mathbb{M} \ind^{\mathrm{CK}}_{M} a''c''$, so the result follows by left monotonicity (see also the proof of Lemma \ref{3-ccoheir}).
\end{proof}

\begin{definition}
We call $q(x)$ as in Lemma \ref{3-sccoheir} a \emph{strong canonical coheir}, and a coheir Morley sequence in it a \emph{strong canonical Morley sequence}.
\end{definition}

The proof of the following is as in Proposition 6.10 of \cite{KR17}:

\begin{proposition} \label{3-wit}
(Weak Independence Theorem) Assume $T$ is $\mathrm{NSOP}_{2}$. Let $a_{1} \ind^{K^{*}}_{M} b_{1}$, $a_{2} \ind^{K^{*}}_{M} b_{2}$, $a_{1} \equiv_{M} a_{2}$, and $\mathrm{tp}(b_{2}/Mb_{1})$ extends to a strong canonical coheir $q(x)$ of $\mathrm{tp}(b_{2}/M)$. Then there exists a realization $a$ of $\mathrm{tp}(a_{1}/Mb_{1}) \cup \mathrm{tp}(a_{2}/Mb_{2})$ with $a \ind^{K^{*}}_{M}b_{1}b_{2}$.
\end{proposition}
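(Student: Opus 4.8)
The plan is to transpose the proof of the independence theorem for Kim-independence in $\mathrm{NSOP}_{1}$ theories (Proposition 6.10 of \cite{KR17}) to the present setting, replacing each $\mathrm{NSOP}_{1}$ ingredient by its $\mathrm{NSOP}_{2}$ counterpart from this section: Lemma 5.5 (the \emph{strong} canonical coheir) substitutes for a strong coheir, the Canonical Chain Condition substitutes for the chain condition, the symmetry of Conant-independence (Theorem 5.4) is used directly, and the combination of the corollary to Theorem 4.3 (Conant-dividing coincides with Kim-dividing along any canonical Morley sequence) with the \emph{uniform bound} of Theorem 4.3 substitutes for Kim's lemma. The target splits into two tasks: the consistency of $\mathrm{tp}(a_{1}/Mb_{1}) \cup \mathrm{tp}(a_{2}/Mb_{2})$, and the Conant-independence $a \ind^{K^{*}}_{M} b_{1}b_{2}$ of a suitable realization.

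First I would use the strong canonical coheir to normalize the $b_{2}$-side. Since $b_{2} \models q|_{Mb_{1}}$, Lemma 5.5 applied with $p(x) = \mathrm{tp}(b_{2}/M)$, parameter $b_{1}$, and tuple $a_{2}$ produces $a_{2}' \equiv_{Mb_{2}} a_{2}$ so that $\mathrm{tp}(a_{2}'b_{2}/Mb_{1})$ extends to a canonical coheir $r$ of $\mathrm{tp}(a_{2}'b_{2}/M) = \mathrm{tp}(a_{2}b_{2}/M)$. As $a_{2}' \equiv_{Mb_{2}} a_{2}$, all the hypotheses, as well as the type $\mathrm{tp}(a_{2}/Mb_{2})$ I must realize, are unchanged, so I replace $a_{2}$ by $a_{2}'$. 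I then build a canonical Morley sequence $(a_{2}^{i}b_{2}^{i})_{i \in \omega}$ in $r$ over $Mb_{1}$ starting with $a_{2}b_{2}$; because $r$ extends $\mathrm{tp}(a_{2}b_{2}/Mb_{1})$, this sequence is indiscernible over $Mb_{1}$ with each term $\equiv_{Mb_{1}} a_{2}b_{2}$, while as a canonical Morley sequence over $M$ it detects Conant-dividing over $M$ via the corollary to Theorem 4.3 (and the projection $(b_{2}^{i})_{i}$ is again a canonical Morley sequence by right monotonicity).

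The heart of the argument is the consistency of the amalgam, which I would carry out following the independence-theorem argument of \cite{KR17}. Assuming $\mathrm{tp}(a_{1}/Mb_{1}) \cup \mathrm{tp}(a_{2}/Mb_{2})$ inconsistent, I fix $\varphi(x,b_{1}) \in \mathrm{tp}(a_{1}/Mb_{1})$ and $\psi(x,b_{2}) \in \mathrm{tp}(a_{2}/Mb_{2})$ whose conjunction is inconsistent; since each $b_{2}^{i} \equiv_{Mb_{1}} b_{2}$, the conjunction $\varphi(x,b_{1}) \wedge \psi(x,b_{2}^{i})$ is inconsistent for every $i$, while $\models \psi(a_{2}^{i},b_{2}^{i})$ throughout. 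Because $a_{2} \ind^{K^{*}}_{M} b_{2}$, the formula $\psi(x,b_{2})$ does not Conant-divide, so the corollary to Theorem 4.3 makes $\{\psi(x,b_{2}^{i})\}_{i \in \omega}$ consistent. I would then couple this with the $b_{1}$-side: using $a_{1} \ind^{K^{*}}_{M} b_{1}$, $a_{1} \equiv_{M} a_{2}$, symmetry (Theorem 5.4), and the Canonical Chain Condition to organize the two families of parameters into a single coheir tree as in Lemma 4.5, whose paths realize the relevant types (hence stay consistent) but whose descending combs become $k$-inconsistent, where $k$ is the bound of Theorem 4.3. As at the end of the proof of Theorem 5.4 this is an instance of $k$-$\mathrm{DCTP}_{1}$, contradicting $\mathrm{NSOP}_{2}$ by Fact 2.4. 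Thus the amalgam is consistent, and the clause $a \ind^{K^{*}}_{M} b_{1}b_{2}$ is read off from the construction, the realization being produced inside a configuration Conant-independent from $b_{1}b_{2}$ over $M$.

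I expect the main obstacle to be manufacturing the inconsistency condition of $k$-$\mathrm{DCTP}_{1}$ from the available data. Neither $\varphi(x,b_{1})$ nor $\psi(x,b_{2})$ is itself Conant-dividing — they lie in the non-Conant-forking types $\mathrm{tp}(a_{1}/Mb_{1})$ and $\mathrm{tp}(a_{2}/Mb_{2})$ — so the $k$-inconsistency along descending combs cannot come from either formula in isolation; it must be extracted from the way the mutually inconsistent pair $\varphi(x,b_{1}) \wedge \psi(x,b_{2})$ interacts once $b_{1}$ and $b_{2}$ are laid out independently along the tree. Arranging the tree so that this interaction yields genuinely $k$-inconsistent combs while the paths remain consistent is the crux, and it is here that the two indiscernibility demands on the $b_{2}$-sequence must be reconciled — indiscernibility over $Mb_{1}$, to couple with $\varphi$, against a coherent realization of $\mathrm{tp}(a_{2}/Mb_{2})$, which naturally wants an $a_{2}$-indiscernible copy. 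Reconciling them is exactly the function of passing to the \emph{strong} canonical coheir in Lemma 5.5, and controlling Conant-dividing along the combs by a single bound independent of the height is precisely why the full strength of Theorem 4.3, rather than a bare ``some/any canonical Morley sequence'' statement, is indispensable.
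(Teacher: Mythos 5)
There is a genuine gap at the heart of your argument: the consistency of $\mathrm{tp}(a_{1}/Mb_{1}) \cup \mathrm{tp}(a_{2}/Mb_{2})$ is never actually established. Your setup fixes $b_{1}$ and moves $b_{2}$ along a canonical Morley sequence $(b_{2}^{i})_{i}$, obtaining that $\{\psi(x,b_{2}^{i})\}_{i}$ is consistent while each $\varphi(x,b_{1})\wedge\psi(x,b_{2}^{i})$ is inconsistent. But this yields no contradiction: it only says $\varphi(x,b_{1})\vdash\bigwedge_{i}\neg\psi(x,b_{2}^{i})$, which is compatible with $a_{1}\ind^{K^{*}}_{M}b_{1}$ since all the pairwise inconsistencies involve the \emph{same} parameter $b_{1}$ and no Conant-dividing formula ever appears. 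Your proposed repair --- assembling a coheir tree as in Lemma 4.5 and extracting an instance of $k$-$\mathrm{DCTP}_{1}$ --- is exactly the step you flag as ``the crux,'' and it is not workable as described: neither $\varphi$ nor $\psi$ Conant-divides, so there is no formula to place along descending combs whose $k$-inconsistency is guaranteed by Theorem 4.3, and no mechanism is given for converting the cross-inconsistency of the pair $\varphi\wedge\psi$ into comb-inconsistency of a single formula. Acknowledging an obstacle does not discharge it.

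The paper (following Proposition 6.10 of \cite{KR17}) avoids trees entirely here. The orientation is dual to yours: one first proves a Claim producing $b'_{2}$ with $a_{1}b'_{2}\equiv_{M}a_{2}b_{2}$ and $a_{1}\ind^{K^{*}}_{M}b_{1}b'_{2}$, by flipping via symmetry to $b_{1}b'_{2}\ind^{K^{*}}_{M}a_{1}$ and showing that $p(x,a_{1})\cup\{\neg\varphi(x,b_{1},a_{1}):\varphi(x,y,a_{1})\text{ Conant-forks over }M\}$ is consistent, where $p(x,a_{2})=\mathrm{tp}(b_{2}/Ma_{2})$. The obstruction would be $p(x,a_{1})\vdash\varphi(x,b_{1},a_{1})$ with $\varphi(x,y,a_{1})$ Conant-dividing; this is refuted by taking a canonical Morley sequence $\{a_{1}^{i}\}$ indiscernible over $Mb_{1}$ (chain condition applied to $b_{1}\ind^{K^{*}}_{M}a_{1}$) and noting that $\bigcup_{i}p(x,a_{1}^{i})$ is consistent, forcing $\{\varphi(x,y,a_{1}^{i})\}_{i}$ to be consistent --- the key point being that the parameter varied along the Morley sequence is $a_{1}$, so the variable $y$ absorbing $b_{1}$ is existentially quantified out and the inconsistency lands on a genuinely Conant-dividing formula. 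Only \emph{then} is the strong canonical coheir used, in a second step, to replace $b'_{2}$ (whose type over $Mb_{1}$ is wrong) by $b''_{2}\equiv_{Mb_{1}}b_{2}$: Lemma 5.5 makes $b'_{1}b''_{2}, b_{1}b'_{2}$ begin a canonical Morley sequence, and the chain condition transports the realization $a_{1}$ of the amalgam at $b_{1}b'_{2}$ to a realization $a$ at $b_{1}b''_{2}$ with $a\ind^{K^{*}}_{M}b_{1}b''_{2}$. Your proposal has the right inventory of tools but deploys Lemma 5.5 at the wrong joint and replaces the decisive symmetry-flip argument with an unconstructed tree.
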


\begin{proof}
We start with the following claim, proven exactly as in \cite{KR17} but with canonical rather than invariant Morley sequences:

\begin{claim}\label{3-witclaim}
There exists some $b'_{2}$ with $a_{1}b'_{2} \equiv_{M}a_{2}b_{2}$ and $a_{1} \ind^{K^{*}}_{M}b_{1}b'_{2} $.
\end{claim}

\begin{proof}
It is enough by symmetry of $\ind^{K^{*}}$ to find $b'_{2}$ with $a_{1}b'_{2} \equiv_{M}a_{2}b_{2}$ and $b_{1}b'_{2} \ind^{K^{*}}_{M} a_{1} $. If $p(x, a_{2}) = \mathrm{tp}(b_{2} / Ma_{2})$ (leaving implied, throughout the proof of this claim, any parameters in $M$ in types and formulas), then by $a_{2} \ind^{K^{*}}_{M} b_{2}$ and symmetry we have $b_{2} \ind^{K^{*}}_{M} a_{2}$, so because $a_{1} \equiv_{M} a_{2}$ we know that $p(x, a_{1})$ contains no formulas Conant-forking over $M$. It suffices to show consistency of

$$p(x, a_{1}) \cup \{\neg\varphi(x, b_{1}, a_{1}): \varphi(x, y, a_{1}) \text{ Conant-forks over } M \}$$

Otherwise, by compactness and equivalence of Conant-forking with Conant-dividing, we must have $p(x, a_{1}) \vdash \varphi(x, b_{1}, a_{1})$ for some $\varphi(x, y, z)$ with $\varphi(x, y, a_{1})$ Conant-dividing over $M$. By symmetry, $b_{1} \ind_{M}^{K^{*}}a_{1}$. So Proposition \ref{3-ccc} yields a canonical Morley sequence $\{a_{1}^{i}\}_{i \in \omega}$ starting with $a_{1}$ and indiscernible over $Mb_{1}$. So 

$$\bigcup_{i =0}^{\omega} p(x, a_{1}^{i}) \vdash \{\varphi(x, b_{1}, a^{i}_{1})\}_{i \in \omega}$$ But because $p(x, a_{1})$ contains no formulas Conant-dividing over $M$ and $\{a_{1}^{i}\}_{i \in \omega}$ is a canonical Morley sequence, $\bigcup_{i =0}^{\infty} p(x, a_{1}^{i})$ is consistent, so $\{\varphi(x, b_{1}, a^{i}_{1})\}_{i \in \omega}$ and therefore $\{\varphi(x, y, a^{i}_{1})\}_{i \in \omega}$ is consistent. But this contradicts the fact that  $\varphi(x, y, a_{1})$ Conant-divides over $M$.
\end{proof}

We now complete the proof of the proposition. Let $p_{2}(x, b_{2}) = \mathrm{tp}(a_{2}/Mb_{2})$ (with parameters in $M$ left implied); we have to show that $\mathrm{tp}(a_{1}/Mb_{1})  \cup p_{2}(x, b_{2})$ has a realization $a$ with $a \ind^{K^{*}}_{M}b_{1}b_{2}$. So for $b''_{2} \equiv_{Mb_{1}} b_{2}$ with $b''_{2} \models q(x)|_{Mb_{1}b'_{2}}$, it suffices to show that $\mathrm{tp}(a_{1}/Mb_{1})  \cup p_{2}(x, b''_{2})$ has a realization $a$ with $a \ind^{K^{*}}_{M}b_{1}b''_{2}$. Using $b''_{2} \equiv_{M} b_{2} \equiv_{M} b'_{2}$, we find $b'_{1}$ with $b'_{1}b''_{2} \equiv_{M} b_{1}b'_{2}$; using Lemma \ref{3-sccoheir}, we can assume $\mathrm{tp}(b'_{1}b''_{2}/Mb_{1}b'_{2})$ extends to a canonical coheir of its restriction to $M$. So $b'_{1}b''_{2}, b_{1}b'_{2}$ begins a canonical Morley sequence $I$ over $M$, and by Proposition \ref{3-ccc} and an automorphism, there is some $a \equiv_{Mb_{1}b'_{2}} a_{1}$ with $a \ind^{K^{*}}_{M}I$ and therefore $a \ind^{K^{*}}_{M}b_{1}b''_{2}$, and with $I$ indiscernible over $Ma$. By $a \equiv_{Mb_{1}} a_{1}$ we have that $a$ realizes $\mathrm{tp}(a_{1}/Mb_{1})$, and by $ab''_{2}\equiv_{M} ab'_{2} \equiv_{M} a_{1}b'_{2} \equiv_{M} a_{2}b_{2}$ we have that $a$ realizes $p_{2}(x, b''_{2})$.

\end{proof}

\section{$\mathrm{NSOP}_{2}$ and $\mathrm{NSOP}_{1}$ theories}

We are now ready to prove that if $T$ is $\mathrm{NSOP}_{2}$, it is $\mathrm{NSOP}_{1}$. The proof follows Conant's proof (Theorem 7.17 of \cite{Co15}) that certain \emph{free amalgamation} theories are either simple or $\mathrm{SOP}_{3}$. As anticipated in Section 5, $\ind^{K^{*}}$ will play the role of $\ind^{a}$, while (strong) canonical Morley sequences will play the role of Morley sequences in the free amalgamation relation. This makes sense, as Lemma 7.6 of \cite{Co15} shows that $\ind^{a}$ is just Kim-independence with respect to Morley sequences in the free amalgamation relation, while Conant-independence in a $\mathrm{NSOP}_{2}$ theory is Kim-independence with respect to canonical Morley sequences. Similarly to how Conant uses free amalgamation and $\ind^{a}$ to show that a (modular) free amalgamation theory is either simple or $\mathrm{SOP}_{3}$, we will show by strong canonical types and $\ind^{K^{*}}$ that if $T$ is $\mathrm{NSOP}_{2}$, then

\:

$T$ is either $\mathrm{NSOP}_{1}$ or $\mathrm{SOP}_{3}$

\:

and therefore must be $\mathrm{NSOP}_{1}$. (In \cite{GFA}, we generalize Conant's work by studying abstract independence relations in potentially strictly $\mathrm{NSOP}_{1}$ or $\mathrm{SOP}_{3}$ theories, finding a more general set of axioms for these relations than Conant's free amalgamation axioms under which the $\mathrm{NSOP}_{1}$-$\mathrm{SOP}_{3}$ dichotomy holds and showing relationships with Conant-independence for \textit{invariant} rather than coheir Morley sequences--note that in Conant's free amalgamation theories, this is just $\ind^{a}$.)

We begin our proof.

Assume $T$ is $\mathrm{NSOP}_{2}$ and suppose $T$ is $\mathrm{SOP}_{1}$. Obviously Kim-dividing independence, $\ind^{Kd}$, implies $\ind^{K^{*}}$; the reverse implication would imply that $\ind^{Kd}$ is symmetric, contradicting $\mathrm{SOP}_{1}$ by Fact \ref{3-kimsymm}. So there are $a \ind^{K^{*}}_{M} b$ with $a$ Kim-dividing dependent on $b$ over $M$; let $r(x, y) = \mathrm{tp}(a, b/M)$, and let $\{b_{i}\}_{i \in \mathbb{N}}$ be a coheir Morley sequence over $M$ starting with $b$ such that $\{r(x, b_{i})\}_{i \in \omega}$ is $k$-inconsistent for some $k$. The following corresponds to Claim 1 of the proof of Theorem 7.17 in \cite{Co15}, but requires a different argument; see also \cite{Lee22} and footnote 1 of \cite{INDNSOP3}, for another argument involving the proof of Proposition 3.14 of \cite{KR17}:

\begin{claim}\label{3-2kd}
We can assume $k = 2$. More precisely, there are $\tilde{a}, \tilde{b} \in \mathbb{M}$ with $\tilde{a} \ind^{K^{*}}_{M}\tilde{b}$ and some coheir Morley sequence $\{\tilde{b}_{i}\}_{i \in \mathbb{N}}$ over $M$ starting with $\tilde{b}$ such that, for $\tilde{r}(\tilde{x}, \tilde{y})=:\mathrm{tp}(\tilde{a}, \tilde{b}/M)$, $\{\tilde{r}(x, \tilde{b}_{i})\}_{i \in \omega}$ is $2$-inconsistent.
\end{claim}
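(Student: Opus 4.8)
The plan is to pass from the full type to a single formula, to get the $2$-inconsistency by a purely combinatorial regrouping of the given witnessing sequence, and to get the independence $\tilde a \ind^{K^*}_M \tilde b$ from a non-Conant-forking argument which is the real heart of the matter. First I would replace $r$ by a single formula: since $a$ is Kim-dividing dependent on $b$, fix $\varphi(x,b) \in \mathrm{tp}(a/Mb)$ and a coheir Morley sequence $\{b_i\}_{i\in\omega}$ starting with $b$ witnessing Kim-dividing, and let $k$ be \emph{minimal} with $\{\varphi(x,b_i)\}_{i\in\omega}$ being $k$-inconsistent, so that any $k-1$ consecutive terms are consistent. Set $\tilde\varphi(x,\tilde y) := \bigwedge_{j=0}^{k-2}\varphi(x,y_j)$ with $\tilde y=(y_0,\dots,y_{k-2})$, and let $s(\tilde y) := \mathrm{tp}(b_0\dots b_{k-2}/M)$ be the type of a length-$(k-1)$ block. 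Since $\tilde r=\mathrm{tp}(\tilde a,\tilde b/M)$ will contain $\tilde\varphi$ as soon as $\tilde\varphi(\tilde a,\tilde b)$ holds, it suffices to produce $\tilde b \models s$ and $\tilde a$ with $\tilde a \ind^{K^*}_M \tilde b$ and $\models\tilde\varphi(\tilde a,\tilde b)$, together with a coheir Morley sequence of realizations of $s$, starting with $\tilde b$, along which $\tilde\varphi$ is $2$-inconsistent.

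The $2$-inconsistency is the easy half. Blocking the original sequence into consecutive groups $B_m := b_{(k-1)m}\dots b_{(k-1)m+(k-2)}$ yields a coheir Morley sequence of realizations of $s$ (blocking a coheir Morley sequence is again one, in the appropriate tensor power of the defining coheir). Two consecutive blocks $B_0,B_1$ list $2(k-1)\geq k$ consecutive terms $b_0,\dots,b_{2k-3}$, so $\tilde\varphi(x,B_0)\wedge\tilde\varphi(x,B_1)$ entails $k$ instances of $\varphi$ and is inconsistent; hence $\{\tilde\varphi(x,B_m)\}_{m\in\omega}$ is $2$-inconsistent. As whether a coheir Morley sequence starting at a given tuple witnesses $2$-inconsistency of $\tilde\varphi$ depends only on the type of that tuple over $M$, this holds for \emph{any} $\tilde b\models s$; I can therefore fix attention on the type $s$ and align the two halves by an automorphism over $M$ at the end.

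It remains to find, for some $\tilde b\models s$, a witness $\tilde a \ind^{K^*}_M \tilde b$ with $\tilde\varphi(\tilde a,\tilde b)$. I would reduce this to showing that $\tilde\varphi(x,\tilde b)$ does \emph{not} Conant-fork over $M$: by Proposition 5.2 (Conant-forking equals Conant-dividing, and the class of Conant-forking formulas is closed under finite disjunction) with compactness, the partial type $\{\tilde\varphi(x,\tilde b)\}\cup\{\neg\chi(x,\tilde b):\chi(x,\tilde b)\text{ Conant-forks over }M\}$ is then consistent, and any realization is the desired $\tilde a$. By the corollary that in $\mathrm{NSOP}_2$ Conant-dividing coincides with Kim-dividing along canonical Morley sequences, it suffices to exhibit a \emph{single} canonical Morley sequence of realizations of $s$ along which $\tilde\varphi$ stays consistent. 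Here the given $k$-inconsistent sequence cannot be used directly: it is necessarily non-canonical, since a canonical one would witness Conant-dividing of $\varphi$ and contradict $a\ind^{K^*}_M b$. Instead I would build the required consistent sequence of blocks from the coheir tree of Lemma 4.5, arranging the blocks as descending-comb-separated root-ward path-segments. Each block is then a length-$(k-1)$ path, hence $\varphi$-consistent by minimality of $k$, while the blocks sit along a descending comb, which begins a canonical Morley sequence in the canonical coheir of $\mathrm{tp}(b/M)$ and is therefore $\varphi$-consistent, precisely because $\varphi$ does not Conant-divide.

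The main obstacle is exactly this last consistency statement: that a ``comb of short path-blocks'' in the coheir tree is globally $\varphi$-consistent, that is, that $\varphi$-inconsistency is forced only by full-length (length $\geq k$) root-ward paths and never by short path-segments joined along a descending comb. This is the step with no analogue in Conant's free-amalgamation argument, where the sequence witnessing dividing and the sequence used for amalgamation coincide, so no reconciliation between canonical and non-canonical structure is needed. I expect to prove it by a tree induction on the number of blocks, feeding Lemma 4.5 a block-indexed coheir sequence and using the minimality of $k$ together with the control of inconsistency by descending combs established in the proof of Theorem 4.3, so that the canonical (descending-comb) direction propagates a common realization of $\varphi$ across all blocks.
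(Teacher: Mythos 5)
Your blocking argument for the $2$-inconsistency and the reduction of the independence to non-Conant-forking of a single formula are fine, but the proposal has a genuine gap at exactly the point you flag as ``the main obstacle,'' and the statement you would need there is not provable from the hypotheses. You fix the block length at $k-1$ and then need $\tilde\varphi(x,\tilde b)=\bigwedge_{j<k-1}\varphi(x,b_j)$ to admit a realization $\tilde a$ with $\tilde a\ind^{K^{*}}_{M}\tilde b$, i.e.\ you need this conjunction not to Conant-fork over $M$. Nothing supports this: you know only that a \emph{single} instance $\varphi(x,b)$ has a Conant-independent realization (namely $a$), and that no realization at all exists for $k$ instances; the maximal block length admitting a Conant-independent realization can be any $k^{*}$ with $1\le k^{*}\le k-1$, and there is no reason it should equal $k-1$. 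Your proposed coheir-tree repair does not close this: in Lemma 4.5 the descending combs are canonical Morley sequences of \emph{single} nodes in a canonical coheir of $\mathrm{tp}(b/M)$, not of $(k-1)$-blocks, and a union of short path-segments hanging off a descending comb is neither a path nor a descending comb, so neither the consistency of paths nor the failure of Conant-dividing of $\varphi(x,b)$ produces a common realization across blocks. The ``tree induction'' you defer to is precisely the unproved (and, I believe, unprovable in general) assertion.

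The paper resolves this tension differently, and in a way that cannot be reproduced at the level of a single formula with a fixed block size. It works with the full type $r$, takes $k^{*}$ to be the \emph{maximal} block length for which $\{r(x,b_i)\}_{i<k^{*}}$ has a realization $a'$ with $a'\ind^{K^{*}}_{M}b_0\cdots b_{k^{*}-1}$, and blocks accordingly. By maximality and symmetry, two consecutive blocks then admit no \emph{Conant-independent} common realization --- though they may well admit a common realization, so one does not yet have literal $2$-inconsistency of a formula or type. Literal $2$-inconsistency is recovered by promoting $a'$ to a canonical Morley sequence $I$ starting with $a'$ and indiscernible over $M\overline b$ (symmetry plus the canonical chain condition), and setting $\tilde a=I$, $\tilde b=\overline b$: any realization of two consecutive instances of $\mathrm{tp}(I,\overline b/M)$ would itself be a canonical Morley sequence every term of which realizes both instances of $r'$, which by the identification of Conant-dividing with Kim-dividing along canonical Morley sequences would produce a Conant-independent realization, a contradiction. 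Your proposal contains no analogue of either the maximality of $k^{*}$ or this promotion step, and without them the two requirements of the claim --- the $2$-inconsistency and the independence $\tilde a\ind^{K^{*}}_{M}\tilde b$ --- pull in opposite directions.
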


\begin{proof}
In particular there is no realization $a'$ of $\{r(x, b_{i})\}_{i < k}$ with $a' \ind^{K^{*}}_{M} b_{0} \ldots b_{k-1}$. Let $k^{*}$ be the maximal value of $k$ without this property, and $\tilde{b} = b_{0} \ldots b_{k^{*}-1}$. Then $\{\tilde{b}_{i}\}_{i \in \omega}= \{b_{ik^{*}} \ldots b_{ik^{*}+ k^{*} -1}\}_{i \in \mathbb{N}}$ is a coheir Morley sequence starting with $\overline{b}$. Let $a' \ind^{K^{*}}_{M} \overline{b}$ realize $\{r(x, b_{i})\}_{i < k^{*}}$, and let $r'(x, y)= \mathrm{tp}(a', \tilde{b}/M)$. Then by maximality and symmetry, there is no realization $a''$ of $r'(x, \tilde{b}_{0}) \cup r'(x, \tilde{b}_{1})$ with $\tilde{b}_{0}\tilde{b}_{1} \ind^{K^{*}}_{M} a''$. So there is no coheir Morley sequence $\{a'_{i}\}_{i \in \mathbb{N}}$ starting with $a'$, every term of which realizes $r'(x, \tilde{b}_{0}) \cup r'(x, \tilde{b}_{1})$. But by $a' \ind^{K^{*}}_{M} \tilde{b}$, symmetry and Proposition \ref{3-ccc}, there is some $M\tilde{b}$-indiscernible canonical Morley sequence $I$ starting with $a$ so that $I \ind^{K^{*}}_{M} \tilde{b}$. So let $\tilde{a}$ be $I$ and $\tilde{b}$ with $\overline{b}$. Since $\tilde{r}(\tilde{x}, \tilde{b}) =\mathrm{tp}(I / M\tilde{b})$ contains $\cup^{n}_{i=1}r'(x_{i}, \tilde{b})$, $\tilde{a}$ and $\tilde{b}$ are as desired.
\end{proof}

Now replace $a$ with $\tilde{a}$ and $b$ with $\tilde{b}$, as in claim \ref{3-2kd}; let $\rho(x, y) \in r(x, y) = \mathrm{tp}(a, b/ M)$ be such that $\{r(x, b_{i})\}_{i \in \omega}$ is $2$-inconsistent, by compactness. We have $b_{1} \ind^{K^{*}}_{M}b_{0}$, in analogy with Claim 2 of the proof of Theorem 7.17 of \cite{Co15}, because $b_{1} \ind^{u}_{M}b_{0}$ and clearly $\ind^{u}$ implies $\ind^{K^{*}}$.

Fix a strong canonical coheir extension $q(x)$ of $p(x)=\mathrm{tp}(b/M)$. We wish to construct, by induction, a configuration $\{b^{1}_{i}b_{i}^{2}\}_{i \in \omega}$ with the following properties:

(1) For $J_{n}$ the sequence beginning with $b_{i}^{2}$ for $i < n$ and then continuing with $b_{i}^{1}$ for $i \geq n$, $J_{n}$ is a strong canonical Morley sequence in $q(x)$.

(2) For $i \leq j$, $b_{i}^{1} b_{j}^{2}\equiv_{M}  b_{0}b_{1}$

(3) $b^{0}_{1} \ldots b^{1}_{n} \ind^{K^{*}}_{M}b^{2}_{0} \ldots b^{2}_{n}$ for any $n \in \omega$.

Then by $a \ind^{K^{*}}_{M} b$ (1) gives consistent sequences of instances of $r(x, y)$, while (2) gives inconsistent pairs by claim \ref{3-2kd}, so we can get an instance of $\mathrm{SOP}_{3}$ from this configuration exactly as in the argument at the end of the proof of Theorem 7.17 in \cite{Co15}, which we will reproduce for the convenience of the reader.

We make repeated use of symmetry for $\ind^{K^{*}}$ throughout. Suppose $\{b^{1}_{i}b_{i}^{2}\}_{i \leq n}$ already constructed. We start by adding $b^{1}_{n+1}$, and then add $b^{2}_{n+1}$. If we take $b^{1}_{n+1} \models q(x)|_{M b^{1}_{0}b^{2}_{0} \ldots b^{1}_{n}b^{2}_{n}}$ then (1) and (2) are preserved up to this point, and (3) is preserved by the following claim (which also holds of Kim-independence in $\mathrm{NSOP}_{1}$ theories):

\begin{claim}\label{3-invariance}
If $a \ind^{K^{*}}_{M} b$ and $\mathrm{tp}(c/Mab)$ extends to an $M$-invariant type $q(x)$, then $ac \ind^{K^{*}}_{M} b$.
\end{claim}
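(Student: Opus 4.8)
The plan is to mirror Kaplan and Ramsey's argument for the corresponding fact about Kim-independence in $\mathrm{NSOP}_{1}$ theories (as the parenthetical suggests), with canonical Morley sequences playing the role of invariant ones. It suffices to show that no $\varphi(x, b, c) \in \mathrm{tp}(a/Mbc)$ Conant-divides over $M$. By Corollary 5.2.1, Conant-dividing of such a $\varphi$ is witnessed by a canonical Morley sequence in $\mathrm{tp}(bc/M)$, and then Theorem 4.3 supplies a finite bound $N$ on the length of a coheir sequence $\langle b_{i} c_{i} \rangle$ of realizations of $\mathrm{tp}(bc/M)$ for which $\{\varphi(x, b_{i}, c_{i})\}$ is consistent. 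So I would assume, for contradiction, that $\varphi$ Conant-divides, and aim to exhibit an arbitrarily long consistent coheir sequence in $(b,c)$.

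To build it, I would first feed $a \ind^{K^{*}}_{M} b$ into the Canonical Chain Condition (Proposition 5.3), obtaining a canonical Morley sequence $\langle b_{i} \rangle_{i < \omega}$ over $M$ starting with $b$, indiscernible over $Ma$ and with $a \ind^{K^{*}}_{M} \langle b_{i} \rangle$; in particular $a b_{i} \equiv_{M} a b$ for every $i$. I would then attach a $c_{i}$ to each $b_{i}$ using the global $M$-invariant type $q$ extending $\mathrm{tp}(c/Mab)$, realizing $q$ over the relevant earlier data. The role of invariance is decisive here: since $\varphi(a, b, z) \in q$ and $a b_{i} \equiv_{M} ab$, $M$-invariance of $q$ gives $\varphi(a, b_{i}, z) \in q$, so one can choose the $c_{i}$ to satisfy $\models \varphi(a, b_{i}, c_{i})$ simultaneously for all $i$. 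Since each $(b_{i}, c_{i})$ realizes $\mathrm{tp}(bc/M)$ and $a$ itself witnesses consistency of $\{\varphi(x, b_{i}, c_{i})\}$, such a sequence, once shown to be a coheir sequence over $M$, would immediately contradict the bound $N$.

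The main obstacle is precisely that last point: Conant-dividing is defined through \emph{coheir} Morley sequences, while the $c_{i}$ come from an invariant type that need not be finitely satisfiable over $M$, so $\langle b_{i} c_{i} \rangle$ need not be a coheir sequence and Theorem 4.3 does not apply directly. The tension is sharpened by the fact that $a$ is only Conant-independent, not coheir-independent, from the $b_{i}$, whereas the $c_{i}$ must ``see'' $a$ in order to force $\varphi(a, b_{i}, c_{i})$; this is exactly what threatens $b_{i} \ind^{u}_{M} (b_{j} c_{j})_{j < i}$. The heart of the argument is therefore to arrange the attachment so that $\langle b_{i} c_{i} \rangle$ is genuinely a coheir sequence over $M$, which I would attempt via a direct finite-satisfiability computation of the kind used in Claim 3.4 and the broom lemma, combining $b_{i} \ind^{u}_{M} b_{<i}$ with finite satisfiability of the restrictions of $q$. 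This step is where one genuinely uses that $q$ is finitely satisfiable over $M$: in the intended application $q$ is a strong canonical coheir (Lemma 5.5), hence a coheir, so the bookkeeping can be made to close, while a fully general $M$-invariant $q$ would force one to lean instead on the robustness of Kim-dividing in $\mathrm{NSOP}_{2}$ furnished by Theorem 4.3 to transfer the consistency obtained along this invariant construction back to coheir sequences.
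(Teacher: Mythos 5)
There is a genuine gap, and it is exactly the one you flag as ``the main obstacle'': your argument requires the interleaved sequence $\langle b_{i}c_{i}\rangle$ to be a coheir sequence over $M$, and you never produce such a sequence. For a general $M$-invariant $q$ this is not just bookkeeping: a realization of an invariant type need not be finitely satisfiable over $M$, so there is no reason for $b_{i}c_{i} \ind^{u}_{M} (b_{j}c_{j})_{j<i}$ to hold, and Theorem 4.3 says nothing about sequences that are merely invariant rather than coheir --- so the ``transfer back to coheir sequences'' you gesture at in the last sentence has no support anywhere in the paper. Even in the special case where $q$ is a coheir, your order of construction blocks the verification: the $b_{i}$ are fixed by Proposition 5.3 \emph{before} any $c_{j}$ exists, so $b_{i}\ind^{u}_{M}b_{<i}c_{<i}$ is not available, and re-realizing the $b_{i}$ over the earlier $c_{j}$ would destroy the property $ab_{i}\equiv_{M}ab$ that you extracted from the chain condition and that you need to place $\varphi(a,b_{i},z)$ in $q$.

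The paper avoids all of this by putting the Morley sequence on the other side. Using symmetry (Theorem 5.4), $b\ind^{K^{*}}_{M}a$, so Proposition 5.3 gives a canonical Morley sequence $I=\langle a_{i}\rangle$ starting with $a$ and indiscernible over $Mb$; then one realizes $q$ \emph{once}, taking $c\models q|_{MIb}$. Since each $a_{i}$ satisfies $a_{i}b\equiv_{M}ab$, $M$-invariance of $q$ forces $a_{i}\equiv_{Mbc}a$ for every $i$. Thus $I$ is a single coheir Morley sequence starting with $a$ along which every formula of $\mathrm{tp}(bc/Ma)$ is realized (by $bc$ itself), so no such formula Conant-divides, giving $bc\ind^{K^{*}}_{M}a$ and, by symmetry again, $a\ind^{K^{*}}_{M}bc$. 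The idea you are missing is that the sequence should be taken in $\mathrm{tp}(a/M)$ rather than in $\mathrm{tp}(bc/M)$: then only one realization of $q$ is needed, chosen after the whole sequence, no finite satisfiability of $q$ is used, and the question of whether the pairs form a coheir sequence never arises.
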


\begin{proof}
By Proposition \ref{3-ccc}, let $I=\{b_{i}\}_{i< \omega}$ be an $Ma$-indiscernible canonical Morley sequence over $M$ starting with $b$. Choose $c^{*}\models q|_{MIa}$, so for $i < \omega$, $b_{i} a \equiv_{Mc^{*}} b_{0} a = ba$. Since $I=\{b_{i}\}_{i< \omega}$ form a coheir Morley sequence with $b_{i} \equiv_{Mac^{*}} b$ for $i < \omega$, $ac^{*} \ind^{K^{*}}_{M} b$ by \ref{3-cfd}, so $ac \ind^{K^{*}} b $ as $c^{*} \models \mathrm{tp}(c/ab)$.

\end{proof}

Now by $b_{1} \ind^{K^{*}}_{M} b_{0}$ and the fact that $J_{0}$ is still a (strong) canonical Morley sequence up to this point, we can find a realization $b_{*} \ind^{K^{*}}_{M} b^{1}_{0} \ldots b^{1}_{n+1}$ of $\{t(b^{1}_{i}, y)\}_{i=1}^{n+1}$ for $t(x, y) = \mathrm{tp}(b_{0}b_{1}/M)$ by Proposition \ref{3-ccc} and an automorphism. Take $b^{*} \models q(x)|_{M b^{2}_{0} \ldots b^{2}_{n}}$, so $b^{*} \equiv_{M} b_{*}$; then this together with (3) allows us to apply Proposition \ref{3-wit} to the conjugate $q_{1}$ of $\mathrm{tp}(b^{1}_{0} \ldots b^{1}_{n+1}/Mb_{*})$ under an automorphism taking $b_{*}$ to $b^{*}$, and $q_{2}=\mathrm{tp}(b^{1}_{0} \ldots b^{1}_{n+1}/Mb^{2}_{0} \ldots b^{2}_{n})$. This and an automorphism (over $b^{2}_{0} \ldots b^{2}_{n}$, taking the Conant-independent joint realization of $q_{1}$ and $q_{2}$ to $b^{1}_{0} \ldots b^{1}_{n+1}$) gives us our desired $b^{2}_{n+1}$ (as the image of $b^{*}$ under this automorphism.)

Now having constructed the configuration, let $a_{n}$ realize the consistent set of instances of $r(x, y)$ coming from $J_{n}$, and let $d_{i}=(b^{1}_{i}, b_{i}^{2})$, $z=(y^{1}, y^{2})$, $\phi(x, y) = \rho(x, y_{1})$, $\psi(x, z) = \rho(x, y_{2})$. As in the proof of Theorem 7.17 of \cite{Co15}, these satisfy the hypotheses of the following fact:

\begin{fact}(Corrected version of proposition 7.2, \cite{Co15}\footnote{Gabriel Conant, in a personal communication with the author (\cite{Co23}), noted this correction to Proposition 7.2 of \cite{Co15}, and plans to publicize this in a future corrigendum. See also Observation 6.15 of \cite{Mal10b} for an earlier version of this fact, which can also be used here.})

Suppose there are sequences $\{a_{i}\}_{i < \omega}$, $\{d_{i}\}_{i < \omega}$, and $\phi (x, y)$, $\psi(x, y)$ so that

(i) $\models \varphi (a_{i}, d_{j})$ for all $i < j$ and $\psi (a_{i}, d_{j})$ for all $i \geq j$

(ii) for all $i < j$, $\varphi(x, b_{i}) \cup \psi(x, b_{j})$ is inconsistent

Then $T$ is $\mathrm{SOP}_{3}$.
\end{fact}

So $T$ is $\mathrm{SOP}_{3}$.

This concludes the proof of the main result of this paper.

\:

\textbf{Acknowledgements} The author would like to thank Mark Kamsma and Itay Kaplan, as well as seminar participants at Hebrew University of Jerusalem, Imperial College London, and Université Claude Bernard Lyon 1 for many helpful edits and comments.

\bibliographystyle{plain}
\bibliography{refs}

\end{document}